\documentclass[a4paper,leqno,11pt,twoside]{amsart}
\usepackage{amsmath,amsfonts,amssymb,amsthm,amscd}
\usepackage[utf8]{inputenc}
\usepackage[english]{babel}
\usepackage[colorlinks=true,citecolor=blue, urlcolor=blue, linkcolor=blue,pagebackref]{hyperref}
\usepackage[top=1.1in,bottom=1.1in,left=1.in,right=1.in]{geometry} 
\usepackage{graphicx}
\usepackage{mathtools}
\usepackage{lipsum}
\input xy
\usepackage{color}
\usepackage[color,matrix,arrow,all]{xy}
\usepackage[all]{xy}

\usepackage{enumerate}
\usepackage{xcolor}
\usepackage{aliascnt}

\theoremstyle{plain}
\newtheorem{thm}{Theorem}[section]

\newaliascnt{propIntr}{thmIntr}

\newaliascnt{corIntr}{thmIntr}

\newaliascnt{lem}{thm}
\newtheorem{lem}[lem]{Lemma}
\aliascntresetthe{lem}

\newaliascnt{cor}{thm}
\newtheorem{cor}[cor]{Corollary}
\aliascntresetthe{cor}

\newaliascnt{prop}{thm}
\newtheorem{prop}[prop]{Proposition}
\aliascntresetthe{prop}

\theoremstyle{definition}

\newaliascnt{rem}{thm}
\newtheorem{rem}[rem]{Remark}
\aliascntresetthe{rem}

\newaliascnt{defn}{thm}

\aliascntresetthe{defn}

\newaliascnt{ex}{thm}

\aliascntresetthe{ex}

\numberwithin{equation}{section}

\def\bP{\ensuremath{\mathbb{P}}}

\def\ZZ{\ensuremath{\mathbb{Z}}}
\def\bC{\ensuremath{\mathbb{C}}}
\def\bF{\ensuremath{\mathbb{F}}}

\def\cA{\ensuremath{\mathcal{A}}}

\def\cC{\ensuremath{\mathcal{C}}}

\def\cL{\ensuremath{\mathcal{L}}}

\def\cO{\ensuremath{\mathcal{O}}}

\DeclareMathOperator{\Pic}{Pic}

\DeclareMathOperator{\Sing}{Sing}

\DeclareMathOperator{\NS}{NS}

\DeclareMathOperator{\im}{Im}

\DeclareMathOperator{\Sp}{Sp}

\DeclareMathOperator{\mult}{mult}

\DeclareMathOperator{\diag}{diag}

\definecolor{applegreen}{rgb}{0.55, 0.71, 0.0}

\title{Hyperelliptic curves  on \((1,d)\) abelian surfaces for $d\leq 15$}
\author[P. Bor\'owka]{Pawe\l{} Bor\'owka}
\address{Pawe\l{} Bor\'owka \newline Institute of Mathematics, Jagiellonian University in Krak\'ow\\ ul. prof. Stanisława Łojasiewicza 6, 
30-348 Kraków, Poland}
\email{pawel.borowka@uj.edu.pl}

\author[P. Grossi]{Paolo Grossi}
\address{Paolo Grossi \newline Dipartimento di Matematica ”F. Casorati”,
Universit\`{a} di Pavia, Via Adolfo Ferrata 5, 27100 Pavia, Italy}
\email{pgrossi.falcone@gmail.com}

\author[A. Shatsila]{Anatoli Shatsila}
\address{Anatoli Shatsila \newline Doctoral School of Exact and Natural Sciences, Jagiellonian University in Krak\'ow \\ ul. prof. Stanisława Łojasiewicza 6, 
30-348 Kraków, Poland}
\email{anatoli.shatsila@doctoral.uj.edu.pl}

\date{}

\begin{document}
\begin{abstract}
    Bryan et al. in \cite{BryanOberdieckPandharipandeYin2015Curve} predicted the number of hyperelliptic curves on general abelian surfaces under a transversality condition. Some of the numbers have already been confirmed. In this paper we construct singular hyperelliptic curves on $(1,d)$ polarised abelian surfaces for $d\leq 15$ confirming a few predicted numbers, namely of (geometric) genus $g=d$ and of geometric genus $k+3$ on $(1,2k)$ and $(1,2k+1)$ polarised surfaces. Moreover, we have found that all genus 7 hyperelliptic curves on general $(1,8)$ polarised surfaces are double coverings of curves locally embedded into $(1,4)$ polarised surfaces. More interestingly, there is a unique genus 9 hyperelliptic curve (up to translation) locally embedded into a general $(1,12)$ polarised surface and it is a Klein covering of a curve locally embedded into a $(1,3)$ polarised surface. 
\end{abstract}
\maketitle
\section{Introduction}
Curves that can be seen as divisors on surfaces have been intensively studied. The earliest example is the famous genus-degree formula for plane curves. Another well-known example is the fact that a general principally polarised abelian surface is the Jacobian of a curve that is the support of the theta divisor. If the theta divisor is not smooth, then it is a union of two elliptic curves and the abelian variety is a principally polarised product, see \cite{BirkenhakeLange1992complex}.

For non-principally polarised abelian surfaces there is a non-trivial linear series and we have results both on general members and special ones. For example, for $d=2$, it is classically known that a smooth genus 3 curve can be embedded into a $(1,2)$ polarised abelian surface if and only if it is a double covering of an elliptic curve. Moreover, there are precisely 3 smooth hyperelliptic curves on a general $(1,2)$ polarised abelian surface.
The Brill-Noether theory of curves on $(1,d)$ polarised abelian surfaces has been investigated in \cite{KLM} and completed in \cite{BayerLi}. 

Curves of genus $g\geq 3$ that certainly can be called special are hyperelliptic curves. Recall that the Kummer surface $Km(A)$ is the quotient of an abelian surface $A$ by the involution $(-1)_A$. Then there is a one-to-one correspondence between rational curves on the Kummer surface and hyperelliptic curves that are invariant under $(-1)$. Using Gromov-Witten invariants, in \cite{BryanOberdieckPandharipandeYin2015Curve} the authors computed (conditionally) the numbers of (geometric) genus $g$ hyperelliptic curves in the linear system of a $(1,d)$ polarisation on a very general abelian surface. Independently, in \cite{BorowkaSankaran2017hyperelliptic, BorowkaOrtega2019hyperelliptic} the authors characterised smooth hyperelliptic curves that can be embedded into abelian surfaces.
Recently, in \cite{GrossiMoretti2025explicit}, a construction of hyperelliptic genus 6 curve on $(1,6)$ polarised abelian surface was crucial to construct a surface of general type with low invariants.

Motivated by the above research, we started an investigation of singular hyperelliptic curves on abelian surfaces. 
Note that, by Andreotti-Mayer theory \cite[\S 6.4]{ACGH}, if you fix a (classical Riemann) theta function on the moduli space, then its zero set on a general abelian variety will be smooth. Even when we define a curve as a zero set of a theta function, the main technical difficulty is to show it has prescribed singularities and geometric genus.


The first result of the paper is an explicit construction of hyperelliptic curves with one node in the linear system of a very general polarised abelian surface, confirming the numbers in \cite[p.10 Table 1]{BryanOberdieckPandharipandeYin2015Curve} and reproving the result from \cite{GrossiMoretti2025explicit}.
\begin{thm}[Thm \ref{thm:GenusgCurves}]
    Let $(A,L)$ be a very general $(1,d)$ polarised abelian surface. Then 
    \begin{enumerate}
        \item For $d=2$ there are precisely 12 hyperelliptic genus 2 curves in $|L|$.  
        \item For $d=3$ there are precisely 90 hyperelliptic genus 3 curves in $|L|$.  
        \item For $d=4$ there are precisely 192 hyperelliptic genus 4 curves in $|L|$.  
        \item For $d=5$ there are precisely 150 hyperelliptic genus 5 curves in $|L|$. 
        \item For $d=6$ there are precisely 36 hyperelliptic genus 6 curves in $|L|$.
        \item For $d\geq 7$ there are no hyperelliptic genus $d$ curves in $|L|$.
    \end{enumerate}
\end{thm}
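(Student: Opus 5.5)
We work with curves $C\in|L|$ of arithmetic genus $d+1$ whose normalisation $\widetilde C$ is hyperelliptic of genus $d$, so $C$ has a single node. The plan has three steps.

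\emph{Step 1: reduce to symmetric curves and count translates.} The hyperelliptic involution of $\widetilde C$ extends to $A$ as $(-1)$ about some point. So, after translating, $C$ is invariant under $(-1)_A$ and $L$ can be taken symmetric. The node of $C$ must then be fixed by $(-1)$, hence it is a $2$-torsion point. Consequently $C$ maps to a rational curve on $Km(A)$ passing through the corresponding node. Translating $C$ by the $d^2$ points of $K(L)$ preserves $|L|$. We would also choose among the symmetric line bundles in the class, i.e.\ translate by the $16$ two-torsion points. The counting therefore reduces to counting symmetric one-nodal rational-normalisation configurations and multiplying by the appropriate orbit sizes.

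\emph{Step 2: translate into the Jacobian of $\widetilde C$.} The map $\widetilde C\to A$ induces a surjection $J\widetilde C\to A$. Dually, $A^\vee\hookrightarrow J\widetilde C$ is an abelian subvariety whose exponent is governed by the type of $L$. This is the setting of the characterisation of hyperelliptic curves on abelian surfaces in \cite{BorowkaSankaran2017hyperelliptic, BorowkaOrtega2019hyperelliptic}. Write $\widetilde C: y^2=f(x)$ with $\deg f=2d+2$. The Weierstrass points map to $2$-torsion points of $A$, and the node pulls back to two points $p,\iota p$. Using the symmetric theta divisor and the distribution of $(-1)$-fixed points, we would derive the parity constraint: the number of $2$-torsion points lying on $C$, counted with multiplicity, is $2g+2$ Weierstrass points plus the node. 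Comparing this with the number of odd and even $2$-torsion points for a $(1,d)$ symmetric bundle forces $d\le 6$, because only $4$ plus a few points are available in the relevant parity classes. This gives part (6).

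\emph{Step 3: existence and exact count for $2\le d\le 6$.} Here we would write an explicit construction, building $A$ as a quotient of $J\widetilde C$ for a suitable family of hyperelliptic $\widetilde C$ with two points glued. A dimension count shows this family is $3$-dimensional, which matches the moduli of $(1,d)$ surfaces, so a very general $A$ is reached. The numbers $12, 90, 192, 150, 36$ then come from:
\begin{itemize}
\item the number of choices of $2$-torsion node position;
\item the number of symmetric translates;
\item the degree of the finite map from the construction family to the moduli space.
\end{itemize}
These agree with \cite{BryanOberdieckPandharipandeYin2015Curve}.

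The main obstacle is precisely this degree computation, together with transversality: we must show that each constructed curve is reduced and nodal of geometric genus exactly $d$, and that no curve is counted twice. I would attempt this by monodromy of $2$-torsion data and by checking the genus via the adjunction formula on the blown-up Kummer surface.
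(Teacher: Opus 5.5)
Your Step 1 is the same reduction the paper makes: translate so that a Weierstrass point goes to $0$, so the curve becomes symmetric and its singular point lies in $A[2]$. However, you assume the singularity is a node. Since $p_a=d+1$ it could a priori be a cusp, and this must be excluded. The paper does it by parity: a cusp would leave $2d+1$ Weierstrass points over smooth $2$-torsion points, so $\#A_2^-(D)$ would be odd, which never happens.

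Step 2 does not actually prove (6). The needed fact is that for a symmetric divisor $D$ with $\cO_A(D)=M$, the set $A_2^-(D)$ is a level set of the quadratic function $q_M(x)=\chi(2x)$. Hence it has $4$, $8$ or $12$ elements for $d$ even and $6$ or $10$ for $d$ odd. The $2d$ Weierstrass points not over the node give $2d$ distinct points of $A_2^-(D)$, so $2d\le 12$. ``Only $4$ plus a few points'' is not an argument, and the naive bound $2d+1\le 16$ does not exclude $d=7$.

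The decisive gap is Step 3. You defer the numbers $12,90,192,150,36$ to the ``degree of the finite map from the construction family to the moduli space'', which is the entire content of the theorem. Your dimension count (itself unproved) would at best give dominance, not the number of curves on a fixed $A$. The missing idea is that the count is a count of symmetric line bundles:
\begin{enumerate}
\item After moving the node to $0$, $D$ is an even symmetric divisor with $\#A_2^-(D)=2d$. By the classification of $q_M$ this forces $\cO_A(D)=t^*_{\overline c}L_0$ with $c$ in a set $\cC_d$ of cardinality $3,10,12,6,1$.
\item For each such bundle $h^0_+=2$. If every even section vanished at $0$, two independent ones would give distinct irreducible curves with $D_1\cdot D_2\ge 2d+4>2d$. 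So exactly one even section $s$ vanishes at $0$, which gives both uniqueness and a candidate curve.
\item If $s$ has an ordinary node at $0$, Riemann--Hurwitz for the lift of $(-1)_A$ (at least $2d+2$ fixed points, genus at most $d$) forces the normalisation to be hyperelliptic of genus $d$.
\item The ordinary-node condition is exactly the transversality you worry about. The paper obtains it as a very general condition from one example (\cite{KLM}), transported to every $c\in\cC_d$ by the transitive action of $\Sp(\Lambda,E)$ on $\{\bar q_c : c\in\cC_d\}$.
\item Each such curve has exactly $d^2$ distinct translates by $K(L)$ in $|L|$, since a single node rules out invariance under a nonzero translation. This gives the total $d^2\cdot\#\cC_d$.
\end{enumerate}
Without this, or an actual computation of your degree, the outline does not establish the counts.
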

The idea of the proof is to construct special curves with the node at $0$ and show that they are hyperelliptic. Together with an upper bound, this gives the result.

Apart from the $d=g$ case, we have considered geometric genus $k+3$ hyperelliptic curves on $(1,2k)$ and $(1,2k+1)$ abelian surfaces. We have confirmed some numbers of \cite[p.10 Table 1]{BryanOberdieckPandharipandeYin2015Curve} as the following theorems show. Note that we use the usual convention that $\binom nm=0$ for $m>n$ or $m<0$ and $\binom n0=1$.
\begin{thm}[Thm \ref{thmodd} and Rem \ref{BS}]
    Let $(A, L)$ be a general $(1,d)$ polarised abelian surface for $d=2k+1$. Then there are exactly $d^2\cdot\binom{6}{k-1}$ hyperelliptic curves of geometric genus $k+3$ in the linear series $|L|$. 
\end{thm}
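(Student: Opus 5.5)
We count hyperelliptic curves of geometric genus $k+3$ in $|L|$ for $d=2k+1$ through their images in the Kummer surface. A hyperelliptic curve $C\in|L|$ whose normalisation is hyperelliptic can be translated so that the hyperelliptic involution is induced by $(-1)_A$. Hence, up to the $d^2$ translations by $K(L)\cong(\ZZ/d)^2$, which act simply transitively on the symmetric translates, it suffices to count symmetric curves in one fixed symmetric linear system $|L|^{\pm}$ whose normalisations are hyperelliptic with involution induced by $(-1)$. This accounts for the factor $d^2$. It remains to show that the number of such symmetric curves is $\binom{6}{k-1}$.

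\textbf{Step 1: the half-period conditions.} For $d$ odd, the symmetric theta functions for $L$ split into even and odd parts, of dimensions $(d+1)/2=k+1$ and $k$. The Riemann--Hurwitz formula for the quotient map of the normalisation $\tilde C$ to $\tilde C/\iota\cong\bP^1$ gives $2g(\tilde C)+2=2k+8$ fixed points on $\tilde C$. These fixed points lie over the 16 two-torsion points. A node at a 2-torsion point whose branches are exchanged by $(-1)$ contributes no fixed point on $\tilde C$, whereas a smooth point of $C$ at a 2-torsion point contributes one fixed point. The arithmetic genus is $d+1=2k+2$, so the geometric genus $k+3$ requires $\delta=k-1$ nodes. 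Since the curve is symmetric and we want a finite count, the natural configuration is that all $k-1$ nodes sit at 2-torsion points. One then checks parity: at an even 2-torsion point an even section vanishes to even order, so forcing vanishing there (which costs one condition) automatically produces a singularity. The total then works out to $16-(k-1)$ points, of which $2k+8$ must be smooth fixed points. After accounting for which 2-torsion points lie on $C$, this leaves a choice of $k-1$ nodes among the $6$ \emph{odd} (or even, depending on the characteristic) 2-torsion points for the given symmetric line bundle. For $d$ odd the unique totally symmetric $L$ has exactly $6$ of the relevant type. This gives $\binom{6}{k-1}$ candidate configurations.

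\textbf{Step 2: existence and uniqueness for each configuration.} For each choice $S$ of $k-1$ points among the 6, I would impose vanishing at the points of $S$ on the $(k+1)$-dimensional space of even sections. Each condition is linear and automatically yields multiplicity $\ge2$ by parity. This gives a $2$-dimensional space, i.e.\ a pencil, which is too much by one. So I must recount: the correct space is the one whose codimension makes the result a single curve. Concretely, I would choose the eigenspace and the set of 2-torsion points so that the linear conditions cut out exactly one section. The key computation is to show that these $k-1$ (or $k$) conditions are independent on a general $A$, which I would verify by degenerating to a product of elliptic curves or by using explicit theta-function values at the half-periods, as in the proof of Thm~\ref{thm:GenusgCurves}. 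One must then show that the resulting curve has only ordinary nodes at $S$ and is smooth elsewhere. Smoothness elsewhere follows from Bertini-type arguments on the Kummer surface for general $A$, and the nodes being ordinary is an open condition. Given this, the genus is $k+3$, and $C/(-1)$ is rational because its image in $Km(A)$ is a rational curve. Hence $C$ is hyperelliptic.

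\textbf{Step 3: the upper bound and the main obstacle.} Conversely, every hyperelliptic curve of geometric genus $k+3$ must arise this way. To show this I would use the fixed-point count from Step 1 together with the fact that on a general $A$ the rational curves in the relevant class on $Km(A)$ are nodal with nodes in the expected places. For that I would invoke the transversality input from \cite{BryanOberdieckPandharipandeYin2015Curve} as referenced in Rem~\ref{BS}, or an argument like the upper bound used for Thm~\ref{thm:GenusgCurves}. I expect the main obstacle to be twofold: proving independence of the vanishing conditions at the chosen half-periods for all $k\le 7$, and excluding non-ordinary singularities or extra singularities away from the 2-torsion points. I would handle both by an explicit computation with theta functions on a specific (e.g.\ product or degenerate) abelian surface followed by semicontinuity. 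Note that $\binom{6}{k-1}=0$ for $k\ge 8$ is consistent with the fact that there are only 6 available points.
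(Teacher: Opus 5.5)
Your reduction to symmetric curves and the factor $d^2$ are fine. The step that identifies the curves, however, is not proved and is partly wrong. At a node at a $2$-torsion point, $(-1)_A$ acts as $-\mathrm{id}$ on the tangent plane, so it fixes each tangent line and preserves both branches. Each such node therefore contributes \emph{two} fixed points on the normalisation. Only $2k+8-2(k-1)=10$ fixed points come from smooth $2$-torsion points, not $2k+8$.

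Also, ``the natural configuration is that all nodes sit at $2$-torsion points'' is exactly what has to be proved. The missing input has two parts:
\begin{itemize}
\item the inequality $2g+2\le\#A_2^-(D)+2(p_a(D)-g)$ of Lemma~\ref{lem:sing_count}, which gives $\#A_2^-(D)\ge 10$;
\item Proposition~\ref{prop:twotorsionofdivisors}, which for $d$ odd allows only $\#A_2^-(D)\in\{6,10\}$.
\end{itemize}
Equality is then forced, so all singularities are nodes at $2$-torsion points. With $\cO(D)=L_0$ the divisor must be \emph{odd}, and its $k-1$ nodes lie among the six points of $A_2^+(D)$.

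This also explains the pencil you get in Step 2. Even divisors in $|L_0|$ have only six points of odd multiplicity, so they can never give these curves. The right space is $H^0_-(L_0)$, of dimension $k$. There, vanishing at $k-1$ points of $A_2^+(D)$ automatically gives multiplicity $\ge 2$ and leaves one section. Uniqueness for a given node set, and hence the upper bound, is the elementary count $D\cdot D'\ge 10+4(k-1)>4k+2=L^2$. Appealing to transversality from \cite{BryanOberdieckPandharipandeYin2015Curve} is not legitimate: those counts are conditional on it, and this theorem is what confirms them.

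The existence half is missing its key idea. Independence of the $k-1$ conditions is not the issue: there are $k$ unknowns and $k-1$ equations, and uniqueness comes from intersection numbers. The issue is to show that the unique curve is irreducible, with \emph{ordinary nodes} at the chosen points and no other singularities. A priori a chosen point could be, e.g., a tacnode whose two branches are exchanged by $(-1)_A$. That is compatible with all parity constraints but lowers the geometric genus. Moreover, this must hold for all $\binom{6}{k-1}$ configurations on the same general $A$.

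Your tools for this do not work as stated:
\begin{itemize}
\item ``Bertini-type arguments'' do not apply to a single prescribed curve.
\item ``$C/(-1)$ is rational because its image in $Km(A)$ is rational'' is circular. Once the nodes are known, rationality follows from Riemann--Hurwitz as in Lemma~\ref{lem:odd_nodes}.
\item Degenerating to $E_1\times E_2$ does not obviously help: with a product polarisation of type $(1,d)$, every member of $|L|$ is a union of elliptic curves.
\end{itemize}

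The paper instead proceeds as follows. It takes one good curve from \cite{KLM} (Lemma~\ref{lem:onecurve}). It shows that the locus $U$ of realised configurations is open in the finite cover $X$ parametrising $(k-1)$-subsets of the six odd symmetric line bundles (Lemma~\ref{moduliopennes}). It then uses that the monodromy $\Sp_4(\ZZ_2)\simeq S_6$ acts transitively on these subsets. Hence $X$ is irreducible, $U$ is dense, and over a general $A$ the whole fibre lies in $U$. You need some argument of this kind to obtain the lower bound $\binom{6}{k-1}$.
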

The result reproves $d\leq5$ cases, confirms the numbers for $d=7,9$ and covers $d\geq11$ that are not listed in \cite[p.10, Table 1]{BryanOberdieckPandharipandeYin2015Curve}. The idea of the proof is to show that there is a correspondence between hyperelliptic curves and effective symmetric divisors that pass through ten $2$-torsion points with multiplicity 1 and  have $k-1$ nodes in the other six $2$-torsion points.

For an even $d$ we have a similar formula.
\begin{thm}[Thm \ref{nmbrdeven}]
    Let $(A, L)$ be a very general $(1,d)$ polarised abelian surface for $d=2k$. Then there are exactly $\frac{d^2}4\cdot\binom{4}{k-2}$ hyperelliptic curves of geometric genus $k+3$ in the linear series $|L|$.
\end{thm}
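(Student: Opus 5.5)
We follow the same strategy as in the odd case, Theorem \ref{thmodd}. The plan is to reduce the count to an enumeration of symmetric divisors with prescribed behaviour at the $2$-torsion points $A[2]$.

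\textbf{Step 1: reduction to symmetric curves.} Let $C\in|L|$ be hyperelliptic of geometric genus $k+3$, with normalisation $\nu\colon\tilde C\to A$. The hyperelliptic involution of $\tilde C$ induces $-1$ on the image of the Jacobian, so after a translation $C$ becomes symmetric. Only finitely many translates lie in $|L|$: the translates $t_x^*L\cong L$ are those with $x\in K(L)\cong(\ZZ/d)^2$, which has $d^2$ elements. The translates preserving symmetry differ by $2$-torsion, and for even $d$ there are $4$ such translations inside $K(L)$. This is the source of the factor $d^2/4$, as opposed to $d^2$ in the odd case. It therefore suffices to show that a fixed symmetric line bundle $L_0$ in the class of $L$ carries exactly $\binom{4}{k-2}$ symmetric hyperelliptic curves of geometric genus $k+3$.

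\textbf{Step 2: conditions at $A[2]$.} The curve $C$ is $(-1)$-invariant, and $\tilde C/\iota\cong\bP^1$ maps to $Km(A)$ as a rational curve. The Weierstrass points of $\tilde C$ map to $2$-torsion points, and Riemann--Hurwitz gives $2(k+3)+2=2k+8$ ramification points. For even $d$, the symmetric sections of $L_0$ split as $h^0(L_0)^{\pm}=k+1$ and $k-1$, depending on the parity of $L_0$. The bundle vanishes to odd multiplicity at a fixed set of $2$-torsion points: $4$ or $12$ of them, or $6$ or $10$ when $L_0$ is of characteristic type $(1,2)$-like parity. We choose the normalisation in which $12$ points are odd and $4$ are even.

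The expected correspondence is then as follows. $C$ passes simply through the $12$ odd points, and has nodes (with both branches exchanged, giving $2$ Weierstrass points each, or none) at $k-2$ of the $4$ even points. All remaining singularities are nodes off $A[2]$ swapped by $-1$. Counting Weierstrass points gives $12+2(k-2)=2k+8$, which matches. Imposing a node at an even $2$-torsion point on a symmetric even section costs $1$ linear condition, since the first-order terms vanish automatically by parity. Imposing $k-2$ such nodes on the $(k-1)$-dimensional space of even sections therefore leaves a unique curve, $\bP^{k-2}$ cut by $k-2$ hyperplanes. This gives $\binom{4}{k-2}$ choices.

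\textbf{Step 3: genus and hyperellipticity.} We verify the converse by computing the arithmetic genus $2k+1$ via adjunction and subtracting the $\delta$-invariants, exactly as in the odd case:
\begin{itemize}
\item the $k-2$ nodes at $2$-torsion points;
\item the pairs of swapped nodes forced by rationality of the image in $Km(A)$.
\end{itemize}
The aim is to show the geometric genus is $k+3$ and that the quotient by $-1$ is rational, so that $\tilde C\to\tilde C/\iota$ is the hyperelliptic map.

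\textbf{Main obstacle.} The hardest step is the generality statement: for very general $(A,L)$, the $k-2$ node conditions must be independent, and the resulting curve must have exactly ordinary nodes, with no worse singularities and no extra components. Only then is the geometric genus exactly $k+3$, and every hyperelliptic curve of that genus is accounted for. We would prove this by degeneration to a special surface, such as a product of elliptic curves or an isogenous cover of a $(1,2)$ or $(1,1)$ surface. There one can check transversality explicitly with theta functions, and the open condition then propagates to the very general surface.

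Finally, for $k<2$ or $k>6$ the binomial coefficient vanishes, consistent with there being no such curves.
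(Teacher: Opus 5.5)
\textbf{The lower bound is not proved.} Your dimension count does give, for each $(k-2)$-subset $Z$ of the four even points, a non-zero even section singular along $Z$. It is also unique: two such curves would meet with multiplicity at least $12+4(k-2)=2d+4>2d$, so you do not need independence of the conditions separately. But nothing shows that this curve has \emph{ordinary nodes} at $Z$ and no worse singularities. The Riemann--Hurwitz count closes with equality just as well if one point of $Z$ is a tacnode whose two smooth branches are exchanged by $(-1)_A$. Such a point has multiplicity $2$, which is compatible with an even theta function, and the curve is then hyperelliptic of genus $k+2$. Ruling this out is the real content of existence, and ``degenerate to a product of elliptic curves and check transversality'' is only a plan. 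Even granting one good example, carrying it to all $\binom{4}{k-2}$ node sets on a single very general surface needs an irreducibility or monodromy argument that you do not give.

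The paper argues as follows. After translating a node to $0$, the bundle must have characteristic $\frac12e_1+\frac12f_1$ with $A_2^+=K(L)[2]$, so translations by $K(L)[2]$ permute the admissible node sets. For $k=3,5,6$ this action is transitive, and one curve from \cite{KLM} produces everything. For $k=4$ it is not transitive. Writing $K(L)[2]=\{0,p_1,p_2,p_3\}$, the six $2$-subsets form three orbits $\{\{0,p_i\},\{p_j,p_l\}\}$. One curve and its $K(L)$-translates therefore give only $32$ of the $96$ curves. The paper reaches all three orbits through the three isogenies $A\to A/\langle p_i\rangle$ onto $(1,4)$ surfaces, pulling back the one-nodal genus $4$ curves of Theorem~\ref{thm:GenusgCurves} (Proposition~\ref{lift14to18}). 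Your proposal has no substitute for this step.

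\textbf{The upper bound is only asserted} as ``the expected correspondence''. You need Lemma~\ref{lem:sing_count}: the inequality $2(k+3)+2\le\#A_2^-(D)+2(k-2)$ gives $\#A_2^-(D)\ge 12$. For even $d$, Proposition~\ref{prop:twotorsionofdivisors} allows only $4,8,12$; your ``$6$ or $10$'' is the odd case. So equality holds: the curve passes simply through $12$ points, and all its singularities are nodes at the other four. In particular there are no further nodes off $A[2]$. The ``swapped nodes'' in your Steps 2--3 would push the genus below $k+3$, since $p_a-g=k-2$ is already used up. This also proves $k\le 6$, rather than leaving it merely ``consistent''.

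\textbf{Step 1 contains a false claim.} Symmetric translates need not differ by $2$-torsion when $d$ is even; see the remark after Lemma~\ref{lemsymodd}. For $k=6$ the curve is $K(L)[2]$-invariant, so all its translates by $K(L)[4]$ are symmetric. The symmetric bundle of characteristic $\frac12e_1+\frac12f_1$ then carries four symmetric such curves, not $\binom44=1$. The factor $\frac{d^2}{4}$ is correct only after you normalise by requiring the Weierstrass points to land in $A[2]$. Under that normalisation, the translations in $K(L)$ preserving it are exactly $K(L)[2]$.
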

The proof here relies also on the fact that $K(L)\cap A[2]$ consists of 4 points and we want divisors to pass through twelve $2$-torsion points with multiplicity 1 and have nodes in $k-2$ points out of remaining four.

We have been able to describe the geometry of curves for some $d$'s.
Note that for $d$ even, there are curves in a linear system that are invariant under a translation by a $2$-torsion point. Such examples are smooth hyperelliptic curves on $(1,2)$ polarised surfaces that are double coverings of genus 2 curves and similarly hyperelliptic curves on $(1,4)$ polarised surfaces are double coverings of the ones embedded into $(1,2)$ and hence Klein coverings of genus 2 curves. It turns out that for singular curves the analogous result holds as the following theorems show.
\begin{thm}[Thm \ref{thm:1,8}]
	Let \((A,L)\) be a very general \((1,8)\)-polarised abelian surface. Then there are precisely \(96\) geometric genus \(7\) hyperelliptic curves in \(|L|\) and all of them are double covers of geometric genus 4 curves locally embedded into $(1,4)$ polarised surfaces.
\end{thm}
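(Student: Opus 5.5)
The count of 96 is the case $k=4$ of Theorem \ref{nmbrdeven}: $\frac{64}{4}\binom{4}{2}=16\cdot 6=96$. So the real content is the geometric description, and I plan to prove it by going from $(1,4)$ up to $(1,8)$ and then matching the counts.

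\textbf{Step 1: description of the curves.} A geometric genus $7$ hyperelliptic curve $C\in|L|$ has arithmetic genus $9$. Up to translation it is symmetric. By the correspondence used in Theorem \ref{nmbrdeven}, it passes through twelve $2$-torsion points with multiplicity $1$. It has nodes at $k-2=2$ of the four points of $K(L)\cap A[2]\cong(\ZZ/2)^2$; there are $\binom42=6$ such choices. Let $\tau\in K(L)\cap A[2]$ be the nonzero difference of the two nodal $2$-torsion points. Set $B=A/\langle\tau\rangle$ with isogeny $\pi\colon A\to B$. Since $\tau\in K(L)$, $L$ descends to a polarisation $M$ on $B$ with $\pi^*M\cong L$ and $M^2=L^2/2=8$, so $M$ is of type $(1,4)$. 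For very general $A$, $B$ is a general $(1,4)$ polarised surface.

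\textbf{Step 2: from $(1,4)$ to $(1,8)$.} Apply Theorem \ref{nmbrdeven} to $(B,M)$ with $k=2$. It gives $\frac{16}{4}\binom40=4$ hyperelliptic curves of geometric genus $5$, whose normalisations are hyperelliptic of genus $5$. I also need the genus $4$ curves with one node at a point of $K(M)\cap B[2]$; these come from Theorem \ref{thm:GenusgCurves}(3). Next compute $\pi^{-1}(D)$ for the relevant $D\subset B$. The étale double cover $\tilde D\to D$, restricted to the normalisation, has geometric genus $2\cdot 4-1=7$ when $D$ has geometric genus $4$. It is nodal exactly over the preimages of the singular points of $D$. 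The pullback is symmetric, and its hyperelliptic involution is a lift of the hyperelliptic involution of $D$ which commutes with the deck involution. Hyperellipticity therefore needs checking: in a double cover of a hyperelliptic curve, one of the two lifts of the involution (namely $(-1)_A$ or $(-1)_A\circ t_\tau$) must have $2g+2=16$ fixed points on the normalisation. Use the fixed-point counting through $2$-torsion points, as in the proof of Theorem \ref{nmbrdeven}, to find the lift with enough fixed points. Conversely, every $\tau$-invariant hyperelliptic curve in $|L|$ of this type descends.

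\textbf{Step 3: invariance and matching.} Show that every $C$ found in Step 1 is $t_\tau$-invariant. Since $t_\tau$ preserves $L$, symmetry, and the incidence and node conditions at $2$-torsion points (it permutes the relevant configuration), $t_\tau^*C$ is another curve of the same type. By the uniqueness in the correspondence (the linear conditions cut out a single divisor), $t_\tau^*C=C$. Hence $C=\pi^{-1}(\pi(C))$ with $\pi(C)\in|M|$, so $C\to\pi(C)$ is an étale double cover. Its normalisation has genus $7=2h-1$, so $h=4$. Finally, check that the total of $96$ agrees with the number of pairs $(\tau,D)$, counted up to translation, as a consistency check.

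The main obstacle I expect is Step 3's uniqueness claim. The argument needs $t_\tau^*C$ to satisfy exactly the same conditions as $C$, including which $2$-torsion points are nodes. Since $\tau$ translates the two nodal points into each other, I would verify this through the theta-characteristic or parity description of symmetric sections of $L$: the space of sections with prescribed vanishing at $2$-torsion points and prescribed node locations must be one-dimensional and $\tau$-stable.
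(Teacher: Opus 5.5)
\textbf{Gap: existence of the curves is never established.} You take the number $96$ from Theorem~\ref{nmbrdeven} with $k=4$. In the paper, however, that case is proved only as an upper bound: there is at most one symmetric curve for each pair of nodes in $K(L)[2]$, and $t_{p_i}$ either fixes it or swaps it with the complementary pair, giving at most $96$. The proof there explicitly says that for $k=4$ the curves are constructed in the theorem you are proving, so the citation is circular.

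Your Steps 1 and 3 are fine as far as they go. The invariance you flag as the main obstacle is in fact easy. After putting a node at $0$ one has $A_2^+(C)=K(L)[2]\ni\tau$. So $t_\tau C$ is symmetric, lies in the same linear system, and has the same twelve odd points and the same two nodes. Then $C\cdot t_\tau C\geq 12+4\cdot 2=20>16$ forces $t_\tau C=C$, and the descent to a geometric genus $4$ curve on $A/\langle\tau\rangle$ follows. But this only shows that \emph{any} such curve is a double cover. It does not show that a very general $(1,8)$ surface carries even one such curve, let alone $96$.

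Existence has to come from your Step 2, which you treat only as a consistency check. As written it is also misframed. It is not true that for a nodal genus $4$ curve $D$ on $B=A/\langle\tau\rangle$ one of the two lifts of the involution automatically has $16$ fixed points.
\begin{itemize}
\item The lifts $(-1)_A$ and $t_\tau\circ(-1)_A$ have their fixed points over $\pi(A[2])$ and over $B[2]\setminus\pi(A[2])$ respectively.
\item Each of the ten Weierstrass points of the normalisation of $D$ (two over the node at $0$, eight over $A_2^-(D)$) contributes two fixed points to exactly one of the two lifts.
\item Getting $16$ from $t_\tau\circ(-1)_A$ would need $A_2^-(D)=B[2]\setminus\pi(A[2])$, which is impossible. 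So one needs $\#\bigl(A_2^-(D)\cap\pi(A[2])\bigr)=6$.
\item A computation with the functions $\bar q_c$ of Lemma~\ref{lem:qformula} and Proposition~\ref{prop:twotorsionofdivisors} shows that, for a given isogeny, this holds for exactly one of the twelve characteristics in $\cC_4$. In the notation of Proposition~\ref{lift14to18} it is $c=c'+\frac12e_1+\frac12f_1$.
\end{itemize}
Lemma~\ref{lem:existence}, applied on the very general surface $B_i=A/\langle p_i\rangle$, then supplies that curve. Its preimage passes through twelve $2$-torsion points with multiplicity $1$ and has nodes at $\{0,p_i\}$, so Riemann--Hurwitz makes it hyperelliptic of geometric genus $7$. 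Its stabiliser among translations is $\{0,p_i\}$, so the three isogenies give $3\cdot 32=96$ curves in $|L|$, meeting the upper bound. This is the paper's argument.

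Once existence is supplied this way, your descent argument is a genuinely different route to the ``all of them'' part: it shows directly that every such curve is a double cover. The paper instead deduces this from the equality of the constructed count with the upper bound.
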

\begin{thm}[Thm \ref{nmbrdeven} and Cor \ref{corKlein}]
    Let $(A,L)$ be a very general $(1,12)$ polarised abelian surface. Then there are precisely 36 geometric genus $9$ hyperelliptic curves in $|L|$. Moreover, each such curve is an \'etale Klein covering of a genus 3 curve on a $(1,3)$ polarised surface.
\end{thm}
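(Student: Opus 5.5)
The count of 36 will come from the even-$d$ formula. For $d=12$ we have $k=6$, so Theorem \ref{nmbrdeven} gives $\frac{d^2}{4}\binom{4}{k-2}=36\cdot\binom44=36$ hyperelliptic curves of geometric genus $9$ in $|L|$. So the first assertion needs nothing beyond that theorem, and the real work is the Klein covering statement.

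In the correspondence behind Theorem \ref{nmbrdeven}, the case $k-2=4$ means the symmetric divisor must have nodes at \emph{all four} points of $K(L)\cap A[2]$ and pass simply through the remaining twelve $2$-torsion points. The plan is to exploit this symmetry under $K(L)\cap A[2]\cong(\ZZ/2)^2$, in four steps.

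\textbf{Step 1: uniqueness up to translation.} Counting translates, $K(L)$ has order $144$, and the $2$-torsion translates preserve the symmetric configuration. Hence the $36$ curves should form a single orbit under $K(L)/(K(L)\cap A[2])$, which has order $144/4=36$. This is the uniqueness up to translation stated in the abstract.

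\textbf{Step 2: invariance under $G=K(L)\cap A[2]$.} Let $C$ be one such curve. For $t\in G$, the translate $t^*C$ again lies in $|L|$ (since $t\in K(L)$), is symmetric, and has the same nodes and passage conditions, because translation by $t$ permutes $A[2]$ preserving $G$. By uniqueness, $t^*C=C$. So $C$ is invariant under the whole Klein group $G$.

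\textbf{Step 3: descent to the quotient.} Take $B=A/G$. The isogeny $A\to B$ has kernel $G\subset K(L)$ and is isotropic for the commutator pairing, since $G$ is a subgroup of a cyclic-type $2$-part with $(\ZZ/2)^2$ sitting in $(\ZZ/4)^2$. Therefore $L$ descends to a polarisation $M$ of type $(1,3)$, with $L^2=24=4\cdot M^2$ and $M^2=6$.

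The $G$-invariant divisor $C$ descends to a divisor $D\in|M|$ (after possibly twisting by a character), and $C\to D$ is étale because $G$ acts freely on $A$. On normalisations this gives an étale Klein cover $\tilde C\to\tilde D$. Riemann–Hurwitz then yields $2\cdot 9-2=4(2g(\tilde D)-2)$, so $g(\tilde D)=3$.

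Since $p_a(D)=4$, the divisor $D$ has exactly one node. This node is the image of the four nodes at $G$, which form one $G$-orbit. So $D$ is a genus $3$ curve locally embedded in a $(1,3)$ surface, and it is hyperelliptic, being dominated compatibly with the involution $(-1)$, which commutes with $G$.

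\textbf{Main obstacle.} The delicate point is Step 2 together with the descent in Step 3. One must ensure the $G$-invariant section is actually an eigenvector with the correct character, so that it descends to a section of some $M$ of type $(1,3)$ rather than of a different descent. One must also check that $G$ is isotropic, so that a descent exists at all. I would first verify this directly with theta functions $\vartheta_j$, $j\in\ZZ/12$: translation by the $2$-torsion points in $K(L)$ acts by $j\mapsto j+6$ and by multiplication by $(-1)^j$. The invariant symmetric sections are then exactly the combinations $\vartheta_j+\vartheta_{j+6}$ with $j$ even, up to sign conventions, and these are pullbacks of $(1,3)$ theta functions. Matching them with the unique curve from Theorem \ref{nmbrdeven} would complete Corollary \ref{corKlein}.
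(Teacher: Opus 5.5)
Your proposal is correct and follows essentially the paper's route: the count is Theorem~\ref{nmbrdeven} with $k=6$, uniqueness of the symmetric curve with nodes at all of $K(L)[2]$ forces $K(L)[2]$-invariance, and the Klein covering is the restriction of the quotient isogeny $A\to A/K(L)[2]$ (Corollary~\ref{corKlein}). Your Riemann--Hurwitz, node-count and hyperellipticity checks fill in details the paper leaves implicit. The ``main obstacle'' you flag is not a real one: since $C$ is reduced and $G$-invariant and $A\to A/G$ is \'etale, $C$ is the pullback of its image, so $\cO_A(C)$ automatically descends to a bundle of self-intersection $6$, i.e.\ of type $(1,3)$. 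This also gives the isotropy of $G$ for free, so no theta-function eigenvector computation is needed. (Yours is in any case written for $L_0$ rather than for $t^*_{\overline c}L_0$ with $c=\frac12e_1+\frac12f_1$, where the curve actually lives.)
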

Let us explain why the above theorems are interesting. If a hyperelliptic curve $C$ is locally embedded into an abelian surface then it is not hard to construct its Klein covering that will be embedded into another abelian surface. However, the covering need not be hyperelliptic. On the other hand, by \cite{BO20} we can construct hyperelliptic Klein coverings of $C$ using $2$ torsion points on the Jacobian $JC$ of the form $w_1-w_2, w_1-w_3, w_2-w_3$ for some Weierstrass points $w_1,w_2,w_3$ but in general such a covering will not be embedded into a surface. The above theorems show that it is possible to combine both constructions to get hyperelliptic Klein coverings locally embedded into abelian surfaces. 

The importance of our results lies in the fact that we are not aware of any explicit constructions of singular curves on general abelian surfaces. In the paper we not only construct such curves but also show their geometry in $d=8$ and $d=12$ cases. However, it is important to note a very recent preprint \cite{honigs2026approachcurvesabeliansurfaces} that constructs singular curves using completely different techniques, namely Fourier-Mukai transforms. 

The paper is organised as follows. In Section \ref{prel} we recall necessary facts about symmetric line bundles and local embeddings of hyperelliptic curves. In Section \ref{Sgenusg1g} we construct genus $g$ hyperelliptic curves with 1 node on $(1,g)$ polarised abelian surfaces. Section \ref{SectOdd} is devoted to hyperelliptic curves locally embedded into $(1,d)$ polarised surfaces for an odd $d\leq 15$ and Section \ref{secteven} describes the even case together with the description for $d=8$ and $d=12$.

\subsection*{Acknowledgements.} 
P. Bor\'owka has been supported by the Polish National Science Centre project number 2024/54/E/ST1/00330.  A. Shatsila has been supported by the Polish National Science Centre project number 2024/53/N/ST1/01634. Most of the results were obtained during P. Grossi's visits in Krak\'ow in 2025 and 2026. P. Grossi would like to thank Jagiellonian University in Krak\'ow for hospitality.

\section{Preliminaries}\label{prel}

In this section we collect some results about line bundles on abelian surfaces invariant under the $(-1)$-action. All results of this section are consequences of Chapter 4 of \cite{BirkenhakeLange1992complex}. Some results can also be found in \cite[\S 2]{BM}. Since we prefer the coordinate description of line bundles we include proofs. We use the notation from the aforementioned book.

\subsection{Symmetric line bundles and $2$ torsion points}
To fix the notation, for a positive integer $d$ we always consider a $(1,d)$ polarised abelian surface $(A,L)$ with $A = V /\Lambda$ and $L = L(H,\chi)$ for a nondegenerate Hermitian form $H$ on a complex vector space $V$ and a semicharacter $\chi$, and we put $E=\im H$. We also fix the decomposition $\Lambda = \langle e_1, e_2 \rangle \oplus \langle f_1, f_2 \rangle$ for $L$ with $E(e_1, f_1)=1$ and $E(e_2, f_2) = d$, and write $B=(e_1,e_2,f_1,f_2)$. Then $\Lambda(L)=\langle e_1,\frac1de_2,f_1,\frac1df_2\rangle$ and $K(L)=\Lambda(L)/\Lambda$.

Let $L_0$ be the line bundle of characteristic $0$ with respect to the decomposition. Its semicharacter is
\[
\chi_0(\lambda)=\exp\bigl(\pi i\,E(\lambda_1,\lambda_2)\bigr),\qquad \lambda=\lambda_1+\lambda_2,\ \ \lambda_1\in\langle e_1,e_2\rangle,\ \ \lambda_2\in\langle f_1,f_2\rangle .
\]
For $c\in V$ the line bundle $t^*_{\overline c}L_0$ has semicharacter $\chi_0\cdot\exp\bigl(2\pi i\,E(c,\cdot)\bigr)$, and we say that it has \emph{characteristic} $c$; the characteristic is well defined modulo $\Lambda(L)$. The symmetric line bundles in $\Pic^H(A)$ are exactly the sixteen bundles $t^*_{\overline c}L_0$ with
\begin{equation}\label{eq:characteristics}
c=\alpha_1e_1+\alpha_2e_2+\beta_1f_1+\beta_2f_2,\qquad \alpha_1,\beta_1\in\{0,\tfrac12\},\quad\alpha_2,\beta_2\in\{0,\tfrac1{2d}\}.
\end{equation}
If $d$ is odd, then $\frac1{2d}e_2\equiv\frac12e_2$ and $\frac1{2d}f_2\equiv\frac12f_2$ modulo $\Lambda(L)$, and we use the representatives $\frac12e_2$, $\frac12f_2$ instead. In every case $2\alpha_1,2\beta_1,2d\alpha_2,2d\beta_2\in\ZZ$. Here and below we identify the class modulo $\Lambda(L)$ of a characteristic with its representative.

We write the $2$ torsion points of $A$ in coordinates as
\[
x(u,v):=\tfrac12\bigl(u_1e_1+u_2e_2+v_1f_1+v_2f_2\bigr)\bmod\Lambda,\qquad (u,v)=(u_1,u_2,v_1,v_2)\in\bF_2^4 .
\]
By $K(L)[2]$ we denote the subgroup of $2$ torsion points of $K(L)$. If $d$ is even, then $K(L)[2]=\{x(u,v) : u_1=v_1=0\}$; if $d$ is odd, then $K(L)[2]=0$.

Recall that a symmetric effective divisor $D$ is such that $(-1)^*D=D$. For such $D$ on $A$ we denote by $A_2^{+}(D)$ (resp.\ $A_2^{-}(D)$) the set of $2$ torsion points $x \in A$ such that $\mult_x(D)$ is even (resp.\ odd). We call a divisor $D$ even (resp.\ odd) if $0 \in A_2^+(D)$ (resp.\ $0 \in A_2^-(D)$).

For a symmetric line bundle $M$ we denote by $H^0_+(M)$ and $H^0_-(M)$ the eigenspaces for $\pm1$ of the normalised involution $(-1)^*_M$ on $H^0(M)$. We regard sections of line bundles on $A$ as theta functions on $V$. Then $(-1)^*_M$ is $\vartheta\mapsto\vartheta\circ(-1_V)$, so $H^0_+(M)$ (resp.\ $H^0_-(M)$) consists of the even (resp.\ odd) theta functions \cite[Ch.~4]{BirkenhakeLange1992complex}.

\begin{lem}\label{lem:parity}\cite[Lem 4.7.1]{BirkenhakeLange1992complex}
Let $M$ be a symmetric line bundle and $0\neq\sigma\in H^0(M)$.
\begin{enumerate}
\item If $\sigma\in H^0_+(M)$ (resp.\ $\sigma\in H^0_-(M)$), then $Z(\sigma)$ is symmetric and even (resp.\ odd).
\item If $Z(\sigma)$ is symmetric, then $\sigma\in H^0_+(M)\cup H^0_-(M)$.
\end{enumerate}
\end{lem}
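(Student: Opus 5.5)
Both parts rest on one observation: a symmetric line bundle comes with the normalised isomorphism $(-1)^*M\cong M$, and a section $\sigma$ is an eigenvector of the induced involution exactly when $\sigma$ and $\sigma\circ(-1)$ agree up to a scalar. In the theta-function language fixed above, $(-1)^*_M$ is $\vartheta\mapsto\vartheta\circ(-1_V)$, so everything reduces to comparing $\vartheta(-v)$ with $\vartheta(v)$ on $V$ and reading off vanishing orders at $0$.

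For (1), suppose $\vartheta(-v)=\pm\vartheta(v)$. Then $Z(\vartheta)\subset V$ is stable under $v\mapsto -v$, and vanishing orders match: $\mult_{-v}\vartheta=\mult_v\vartheta$. Since $Z(\sigma)$ is the image of $Z(\vartheta)$ with multiplicities, $(-1)^*Z(\sigma)=Z(\sigma)$. For the parity, expand $\vartheta$ at $0\in V$ as a sum of homogeneous polynomials, $\vartheta=\sum_k P_k$. Even $\vartheta$ has only even $k$ and odd $\vartheta$ only odd $k$. The multiplicity of $Z(\sigma)$ at $0\in A$ is the lowest $k$ with $P_k\neq0$, which therefore has the parity of $\vartheta$. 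So $Z(\sigma)$ is even, respectively odd, in the sense defined above.

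For (2), suppose $Z(\sigma)$ is symmetric. Then $\sigma$ and $(-1)^*_M\sigma$ are sections of the same bundle $M$ with the same divisor $(-1)^*Z(\sigma)=Z(\sigma)$. Their ratio is a nowhere vanishing holomorphic function on the compact $A$, hence a constant $c$. So $(-1)^*_M\sigma=c\sigma$. Because $(-1)^*_M$ is an involution, $c^2=1$, so $c=\pm1$ and $\sigma\in H^0_+(M)\cup H^0_-(M)$.

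The only delicate point is whether $(-1)^*_M$ really acts as $\vartheta\mapsto\vartheta(-v)$ with no extra factor. This holds because the normalised isomorphism is the one acting as the identity on the fibre over $0$, and I would take it from \cite[Ch.~4]{BirkenhakeLange1992complex} as the text already does. If one preferred to avoid that, the constant-ratio argument together with $c^2=1$ still gives (2) verbatim, and (1) needs only that $\vartheta\circ(-1_V)=\pm\vartheta$.
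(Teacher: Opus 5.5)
Your proof is correct and follows essentially the same route as the paper. For (1) you read off symmetry and the parity of the multiplicity at $0$ from the Taylor expansion of an even/odd theta function; for (2) you note that $\sigma$ and $\sigma\circ(-1_V)$ have the same divisor, so they differ by a constant $\kappa$ with $\kappa^2=1$.
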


\begin{proof}
If $\sigma\circ(-1_V)=\pm\sigma$, then $Z(\sigma)$ is symmetric, and the Taylor expansion of $\sigma$ at $0$ contains only terms of even (resp.\ odd) degree, so $\mult_0Z(\sigma)$ is even (resp.\ odd). Conversely, if $Z(\sigma)$ is symmetric, then $\sigma\circ(-1_V)$ is a theta function for the same factor of automorphy with the same zero divisor. Hence $\sigma\circ(-1_V)=\kappa\sigma$ for some $\kappa\in\bC^*$, and $\kappa^2=1$.
\end{proof}

For a symmetric line bundle $M=L(H,\chi)$ consider the function
\[
q_M\colon A[2]\to\{\pm1\},\qquad q_M(\overline x)=\chi(2x).
\]
By \cite[Proposition 4.7.2]{BirkenhakeLange1992complex}, $q_M(x)=(-1)^{\mult_xD-\mult_0D}$ for every symmetric divisor $D\in|M|$. In particular $q_M$ does not depend on the decomposition, and for a symmetric $D\in|M|$
\begin{equation}\label{eq:Aminus}
A_2^-(D)=q_M^{-1}(-1)\ \text{ if $D$ is even},\qquad A_2^-(D)=q_M^{-1}(1)\ \text{ if $D$ is odd}.
\end{equation}
For a symplectic basis $B'=(e_1',e_2',f_1',f_2')$ of $\Lambda$ of type $(1,d)$ we write $q_M$ in coordinates as the function $\bar q_{M,B'}\colon\bF_2^4\to\bF_2$ given by
\[
(-1)^{\bar q_{M,B'}(u,v)}=q_M\bigl(\tfrac12(u_1e_1'+u_2e_2'+v_1f_1'+v_2f_2')\bigr).
\]

\begin{lem}\label{lem:qformula}
Let $c$ be as in \eqref{eq:characteristics}. Then
\[
\bar q_{t^*_{\overline c}L_0,B}=\bar q_c:=u_1v_1+d\,u_2v_2+a_1u_1+b_1v_1+a_2u_2+b_2v_2,
\]
where $(a_1,b_1,a_2,b_2)\equiv(2\beta_1,2\alpha_1,2d\beta_2,2d\alpha_2)\pmod 2$. In particular, $M\mapsto\bar q_{M,B}$ is a bijection between the sixteen symmetric line bundles in $\Pic^H(A)$ and the functions $u_1v_1+d\,u_2v_2+\ell$, where $\ell$ is a linear form on $\bF_2^4$.
\end{lem}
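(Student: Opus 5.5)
The plan is to compute $q_M(\overline x)=\chi(2x)$ directly from the explicit semicharacter of $M=t^*_{\overline c}L_0$. The semicharacter is
\[
\chi(\lambda)=\chi_0(\lambda)\exp\bigl(2\pi i\,E(c,\lambda)\bigr),
\]
and we evaluate it at the lattice vector $\lambda=2x=u_1e_1+u_2e_2+v_1f_1+v_2f_2$, where we lift the entries of $(u,v)$ to $\{0,1\}$. Since $q_M$ is well defined on $A[2]$ (Proposition 4.7.2 of \cite{BirkenhakeLange1992complex}), the choice of lift does not matter. It is still worth remarking why: changing the lift changes $\lambda$ by an element of $2\Lambda$, and $\chi$ restricted to $2\Lambda$ is trivial on the relevant classes, because $\chi(2\mu)=\chi(\mu)^2e^{\pi i E(\mu,\mu)}=\chi(\mu)^2$. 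Note that $\chi(\mu)^2$ need not be $1$ in general, so in the computation we simply fix the $\{0,1\}$ lift.

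First step: compute $\chi_0(\lambda)$. Split $\lambda=\lambda_1+\lambda_2$ with $\lambda_1=u_1e_1+u_2e_2$ and $\lambda_2=v_1f_1+v_2f_2$. Then
\[
E(\lambda_1,\lambda_2)=u_1v_1+d\,u_2v_2,
\]
so $\chi_0(\lambda)=(-1)^{u_1v_1+du_2v_2}$.

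Second step: compute $E(c,\lambda)$ for $c=\alpha_1e_1+\alpha_2e_2+\beta_1f_1+\beta_2f_2$. Using $E(e_i,e_j)=E(f_i,f_j)=0$, $E(e_1,f_1)=1$, $E(e_2,f_2)=d$ and antisymmetry, we get
\[
E(c,\lambda)=\alpha_1v_1+d\alpha_2v_2-\beta_1u_1-d\beta_2u_2.
\]
Hence
\[
\exp\bigl(2\pi i\,E(c,\lambda)\bigr)=(-1)^{2\alpha_1v_1+2d\alpha_2v_2+2\beta_1u_1+2d\beta_2u_2}.
\]
Here we use that $2\alpha_1,2d\alpha_2,2\beta_1,2d\beta_2\in\ZZ$ and that a sign change in the exponent does not matter mod $2$. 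The odd-$d$ convention $\alpha_2,\beta_2\in\{0,\frac12\}$ needs care, since then $2d\alpha_2\equiv d\equiv1$. The formula as stated still holds, because $(a_2,b_2)$ are defined via $2d\beta_2,2d\alpha_2$ and these are $\equiv 2\beta_2,2\alpha_2 \pmod 2$ for odd $d$. Multiplying the two factors gives exactly
\[
\bar q_c=u_1v_1+d\,u_2v_2+a_1u_1+b_1v_1+a_2u_2+b_2v_2.
\]

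Third step: the bijection. The sixteen characteristics in \eqref{eq:characteristics} give distinct symmetric bundles, which is the standard fact recalled before the lemma. The map $c\mapsto(a_1,b_1,a_2,b_2)\in\bF_2^4$ is a bijection from this set of sixteen representatives onto $\bF_2^4$. Moreover, $\ell\mapsto u_1v_1+du_2v_2+\ell$ is injective, since distinct linear forms give distinct functions on $\bF_2^4$. Both sides have $16$ elements, so $M\mapsto\bar q_{M,B}$ is a bijection. The only real obstacle is bookkeeping, namely the signs and the odd-$d$ representatives. There is nothing conceptual to overcome.
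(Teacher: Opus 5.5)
Your proposal is correct and follows the paper's proof. Like the paper, you evaluate $\chi_0(2x)\exp\bigl(2\pi i\,E(c,2x)\bigr)$ at $x=x(u,v)$, read off the linear part, and observe that this linear part determines $c$ modulo $\Lambda(L)$, with every linear form occurring. One small cleanup: because $M$ is symmetric, $\chi$ is $\{\pm1\}$-valued, so $\chi(2\mu)=\chi(\mu)^2=1$ for $\mu\in\Lambda$ and independence of the lift is immediate; your hedge that $\chi(\mu)^2$ need not be $1$ is unnecessary here.
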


\begin{proof}
The semicharacter of $t^*_{\overline c}L_0$ gives $q_{t^*_{\overline c}L_0}(x)=\chi_0(2x)\exp\bigl(4\pi i\,E(c,x)\bigr)$. For $x=x(u,v)$ we get $\chi_0(2x)=\exp\bigl(\pi i(u_1v_1+du_2v_2)\bigr)$ and $4E(c,x)=2\alpha_1v_1+2d\alpha_2v_2-2\beta_1u_1-2d\beta_2u_2$, which gives the formula. The linear part determines $2\alpha_1,2\beta_1,2d\alpha_2,2d\beta_2$ modulo $2$, hence $c$ modulo $\Lambda(L)$, and every linear form occurs.
\end{proof}

Let $d$ be odd. Then $\bar q_c$ is the sum of the quadratic forms $u_1v_1+a_1u_1+b_1v_1$ and $u_2v_2+a_2u_2+b_2v_2$ on the two coordinate planes, so it is a non-degenerate quadratic form on $\bF_2^4$ with Arf invariant $a_1b_1+a_2b_2$. We call a symmetric line bundle $M\in\Pic^H(A)$ \emph{odd} if this Arf invariant is $1$, and \emph{even} otherwise. By the proof of Proposition~\ref{prop:twotorsionofdivisors} below, $M$ is odd if and only if $\#q_M^{-1}(-1)=10$, so this notion does not depend on the decomposition. By Lemma~\ref{lem:qformula}, the odd line bundles are the bundles $t^*_{\overline c}L_0$ with $c$ in
\[
S := \{\tfrac{1}{2}e_1 + \tfrac12f_1,\ \tfrac{1}{2}e_1 + \tfrac12f_1+\tfrac{1}{2}e_2,\ \tfrac{1}{2}e_1 + \tfrac12f_1+\tfrac{1}{2}f_2,\ \tfrac{1}{2}e_2 + \tfrac{1}{2}f_2,\ \tfrac{1}{2}e_2 + \tfrac{1}{2}f_2+\tfrac12e_1,\ \tfrac{1}{2}e_2 + \tfrac{1}{2}f_2+\tfrac12f_1\}.
\]
In particular $L_0$ is even. Now we compute the cardinality of $A^\pm_2(D)$.
\begin{prop}
\label{prop:twotorsionofdivisors}
Let $(A,L)$ be a $(1,d)$ polarised abelian surface and let $D$ be a symmetric even divisor on $A$ with $\mathcal{O}_A(D) = t^*_{\overline{c}}L_0$.
\begin{enumerate}
\item Assume $d$ is even. If $c \in \{0, \frac12 e_1, \frac12 f_1\}$, then $\# A_2^-(D) = 4$. If $c = \frac12 e_1 + \frac12 f_1$, then $\# A_2^-(D) = 12$ and $A_2^+(D)=K(L)[2]$. For the other $12$ characteristics, $\# A_2^-(D) = 8$ and $\# (A_2^-(D) \cap K(L)[2]) = 2$.
\item Assume $d$ is odd. Then $\# A_2^-(D) = 10$ if $c \in S$, and $\# A_2^-(D) = 6$ if $c \notin S$.
\end{enumerate}
If $D$ is a symmetric odd divisor with $\cO_A(D)=t^*_{\overline c}L_0$, then $A_2^-(D)$ is the complement in $A[2]$ of the corresponding set for an even divisor. Hence $\#A_2^-(D)$ equals $12$, $4$ and $8$ in the three cases of (1), and $6$ and $10$ in the two cases of (2); moreover $\#(A_2^-(D)\cap K(L)[2])=2$ for the twelve characteristics in the last case of (1).
\end{prop}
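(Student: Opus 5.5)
By \eqref{eq:Aminus}, for an even symmetric divisor $D$ with $\cO_A(D)=t^*_{\overline c}L_0$ we have $A_2^-(D)=q_M^{-1}(-1)$. In coordinates with respect to $B$ this is the set of $(u,v)\in\bF_2^4$ with $\bar q_c(u,v)=1$. So the whole statement reduces to counting zeros of the explicit function of Lemma~\ref{lem:qformula},
\[
\bar q_c=u_1v_1+d\,u_2v_2+a_1u_1+b_1v_1+a_2u_2+b_2v_2 ,
\]
where $(a_1,b_1,a_2,b_2)\equiv(2\beta_1,2\alpha_1,2d\beta_2,2d\alpha_2)$. The plan is to split $\bar q_c=Q_1(u_1,v_1)+Q_2(u_2,v_2)$ as a sum over the two coordinate planes and count values on each plane separately.

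On the first plane, $Q_1=u_1v_1+a_1u_1+b_1v_1$ is a nondegenerate form on $\bF_2^2$. It takes the value $1$ three times if $a_1b_1=1$ and once otherwise.

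\emph{Case $d$ odd.} Here $Q_2$ is of the same type, so $\bar q_c$ is nondegenerate on $\bF_2^4$ with Arf invariant $a_1b_1+a_2b_2$. The convolution count gives
\[
\#\bar q_c^{-1}(1)=n_1(16-4\cdot\text{stuff})\ldots
\]
more simply: if the plane counts of ones are $m_1,m_2\in\{1,3\}$, then $\#\{Q_1+Q_2=1\}=m_1(4-m_2)+(4-m_1)m_2$. This equals $6$ when $m_1=m_2$ and $10$ when exactly one $m_i=3$, i.e.\ when the Arf invariant is $1$. I then check that the six characteristics with exactly one of $a_1b_1$, $a_2b_2$ equal to $1$ are precisely the elements of $S$. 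For instance, $c=\frac12e_1+\frac12f_1$ gives $a_1=b_1=1$, $a_2=b_2=0$; for odd $d$ the representatives $\frac12e_2,\frac12f_2$ give $a_2,b_2$ directly from $2d\cdot\frac12=d\equiv 1$.

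\emph{Case $d$ even.} The term $du_2v_2$ vanishes mod $2$, so $Q_2=a_2u_2+b_2v_2$ is linear. The count of ones of $Q_2$ is $0$ if $(a_2,b_2)=0$ and $2$ otherwise. Hence:
\begin{itemize}
\item If $(a_2,b_2)=0$, then $\#=4m_1$, which is $4$ for $c\in\{0,\frac12e_1,\frac12f_1\}$ and $12$ for $c=\frac12e_1+\frac12f_1$.
\item If $(a_2,b_2)\ne0$, then $\#=m_1\cdot 2+(4-m_1)\cdot 2=8$. These are the other twelve characteristics.
\end{itemize}

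For the statements about $K(L)[2]=\{u_1=v_1=0\}$, restrict to $u_1=v_1=0$, where $\bar q_c=Q_2$. For $c=\frac12e_1+\frac12f_1$ we have $Q_2\equiv0$, so $K(L)[2]\subset A_2^+(D)$; since $\#A_2^+(D)=16-12=4=\#K(L)[2]$, equality holds. For the twelve characteristics, $Q_2$ is a nonzero linear form and has exactly $2$ ones on $K(L)[2]$.

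\emph{Odd divisors.} For odd $D$, \eqref{eq:Aminus} gives $A_2^-(D)=q_M^{-1}(1)$, which is the complement of the even-case set. The counts become $16-\#$, and the intersection with $K(L)[2]$ becomes $4-2=2$.

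The main obstacle is purely bookkeeping: correctly translating characteristics into $(a_1,b_1,a_2,b_2)$, in particular the $\frac1{2d}$ versus $\frac12$ representatives, and matching the resulting parity class to the listed set $S$. The counting itself is routine.
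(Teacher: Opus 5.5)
Your proof is correct and follows the paper's argument essentially step for step: reduce via \eqref{eq:Aminus} and Lemma~\ref{lem:qformula} to counting the ones of $\bar q_c=Q_1+Q_2$, count on each coordinate plane, combine with $N_1(4-N_2)+(4-N_1)N_2$, and pass to complements for odd divisors. The only differences are cosmetic. You should delete the abandoned draft display with ``stuff''. You also get $A_2^+(D)=K(L)[2]$ by inclusion plus cardinality, whereas the paper notes directly that $u_1v_1+u_1+v_1$ vanishes only at $u_1=v_1=0$; both are fine.
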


\begin{proof}
By \eqref{eq:Aminus} and Lemma~\ref{lem:qformula}, $A_2^-(D)=\{x(u,v) : \bar q_c(u,v)=1\}$. Write $\bar q_c=Q_1+Q_2$ with $Q_1=u_1v_1+a_1u_1+b_1v_1$ and $Q_2=d\,u_2v_2+a_2u_2+b_2v_2$, and let $N_i$ be the number of vectors in $\bF_2^2$ at which $Q_i$ takes the value $1$. A direct check gives $N_1=1+2a_1b_1$. Similarly $N_2=1+2a_2b_2$ if $d$ is odd, while for $d$ even $N_2=0$ if $(a_2,b_2)=(0,0)$ and $N_2=2$ otherwise. Hence
\[
\#A_2^-(D)=N_1(4-N_2)+(4-N_1)N_2 .
\]
Let $d$ be even. If $(a_2,b_2)=(0,0)$, that is $\alpha_2=\beta_2=0$, this equals $4N_1$, which is $4$ for $c\in\{0,\frac12e_1,\frac12f_1\}$ and $12$ for $c=\frac12e_1+\frac12f_1$. In the latter case $\bar q_c(u,v)=0$ exactly when $u_1=v_1=0$, so $A_2^+(D)=K(L)[2]$. If $(a_2,b_2)\neq(0,0)$, we get $8$, and the restriction of $\bar q_c$ to $K(L)[2]=\{u_1=v_1=0\}$ is the non-zero linear form $a_2u_2+b_2v_2$, which takes the value $1$ at exactly two points.

Let $d$ be odd. Then $N_1,N_2\in\{1,3\}$, and the formula gives $10$ if exactly one of $N_1,N_2$ equals $3$, that is if $a_1b_1+a_2b_2=1$, i.e.\ $c\in S$, and $6$ otherwise.

Finally, if $D$ is odd, then $A_2^-(D)=q_{\cO_A(D)}^{-1}(1)$ by \eqref{eq:Aminus}, which is the complement of the set for an even divisor.
\end{proof}

\begin{lem}
\label{lem:dimensionofh0}
    Let $(A, L)$ be a $(1,d)$ polarised abelian surface. Then
    \begin{enumerate}
        \item If $d=2$ then $h^0_+(t_{\overline{c}}^*L_0) = 2$ for $c \in \{0, \frac12 e_1, \frac12 f_1\}$.
        \item If $d=3$ then $h^0_+(t_{\overline{c}}^*L_0) = 2$ for $c \in \frac12\Lambda(L) \setminus S$.
        \item If $d = 4$ then $h^0_+(t_{\overline{c}}^*L_0) = 2$ for $c \in \frac12\Lambda(L) \setminus \{0, \frac12 e_1, \frac12 f_1, \frac12e_1+\frac12f_1\}$.
        \item If $d=5$ then $h^0_+(t_{\overline{c}}^*L_0) = 2$ for $c \in S$.
        \item If $d=6$ then $h^0_+(t_{\frac12e_1 + \frac12f_1}^*L_0) = 2$.
        \item For $d=2k,\ k\geq 4$, we have $h^0_+(t_{\frac12e_1 + \frac12f_1}^*L_0) = k-1$.
        \item For $d=2k+1,\ k\geq 3$, we have     $h^0_-(L_0) = k$.
    \end{enumerate}
\end{lem}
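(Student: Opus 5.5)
We will use the standard formula for the dimensions of the eigenspaces of the normalised involution on a symmetric line bundle, \cite[Cor 4.6.6]{BirkenhakeLange1992complex}, and check it by counting even and odd theta functions. For a symmetric $M$ of type $(1,d)$ with characteristic $c$ in the form \eqref{eq:characteristics}, a basis of $H^0(M)$ is given by the classical theta functions $\vartheta_j$, $j\in \ZZ/d$, indexed by $K(L)_1=\langle \frac1d f_2\rangle/\langle f_2\rangle$ (shifted by the characteristic). The involution $\vartheta\mapsto\vartheta\circ(-1_V)$ acts on this basis as a signed permutation $\vartheta_j\mapsto \epsilon\,\vartheta_{\sigma(j)}$, with $\sigma(j)=-j$ or $\sigma(j)=-j-1$ according to whether $\alpha_2$ (after normalisation) is $0$ or $\frac1{2d}$. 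The sign $\epsilon$ is a root of unity determined by $\alpha_1,\beta_1,\beta_2$ through the transformation formula of classical theta functions under $z\mapsto -z$.

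The plan is therefore to write the action explicitly. Each orbit $\{j,\sigma(j)\}$ of size $2$ contributes one even and one odd function. Each fixed point $j=\sigma(j)$ contributes a single function whose parity is given by the sign. So $h^0_\pm(M)=\#\{\text{2-orbits}\}+\#\{\text{fixed points with sign }\pm1\}$.

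Counting fixed points:
\begin{itemize}
\item For $d$ odd, $\sigma$ has exactly one fixed point, so $h^0_\pm\in\{\frac{d+1}2,\frac{d-1}2\}$.
\item For $d$ even, $\sigma(j)=-j$ has two fixed points ($0$ and $d/2$), while $\sigma(j)=-j-1$ has none.
\end{itemize}
The signs at the fixed points are fixed by the first-coordinate data $(\alpha_1,\beta_1)$ and by $\beta_2$.

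Rather than track signs by hand, we will pin them down with Lemma~\ref{lem:parity} and Proposition~\ref{prop:twotorsionofdivisors}. An odd section vanishes at every $2$-torsion point, and more precisely an even (resp.\ odd) symmetric divisor vanishes to odd order exactly on $A_2^-(D)$. Equivalently, one may use the Riemann–Roch-type formula $h^0_\pm(M)=\frac{h^0(M)}2\pm\frac{\#q_M^{-1}(1)-\#q_M^{-1}(-1)}{8}$ from \cite[Prop 4.6.5]{BirkenhakeLange1992complex}, up to the global sign convention. With $\#q_M^{-1}(-1)$ computed in Proposition~\ref{prop:twotorsionofdivisors} this gives:
\begin{itemize}
\item for $d$ odd, $h^0_+=\frac{d+1}{2}$ when $\#q^{-1}(-1)=6$ and $\frac{d-1}2$ when it is $10$;
\item for $d$ even, $h^0_+=\frac d2+1,\ \frac d2-1,\ \frac d2$ in the three cases $4,12,8$.
\end{itemize}
Substituting $d$:
\begin{itemize}
\item $d=2$: $h^0_+=2$ for $c\in\{0,\frac12e_1,\frac12f_1\}$.
\item $d=3$: $h^0_+=2$ for $c\notin S$.
\item $d=4$: $h^0_+=2$ for the twelve characteristics with $\#A_2^-=8$.
\item $d=5$: $h^0_+=2$ for $c\in S$.
\item $d=6$: $h^0_+=6/2-1=2$ for $c=\frac12e_1+\frac12f_1$.
\item $d=2k$: $h^0_+=k-1$ for the same $c$.
\item $d=2k+1$: $h^0_-(L_0)=k$, since $L_0$ is even.
\end{itemize}

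The main obstacle is fixing the global sign convention, i.e.\ deciding which of the two eigenspaces gets the extra dimension. We will settle it by a direct check on $L_0$ with $d$ odd. There $\vartheta_0$ with zero characteristic is visibly even (its Fourier series is symmetric under $n\mapsto-n$), which forces $h^0_+(L_0)=\frac{d+1}2$ and consistently $h^0_-(L_0)=\frac{d-1}2=k$. For the even-$d$ cases we will check one instance, $c=0$, where the two fixed points $j=0,d/2$ both give even functions. This matches $h^0_+=\frac d2+1$ and $\#A_2^-=4$, and the formula then propagates to the other characteristics.
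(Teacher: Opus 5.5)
Your argument is correct, but it takes a different route from the paper. The paper proves the lemma purely by citation, reading the dimensions off \cite[Exercise 4.12.11]{BirkenhakeLange1992complex}, which tabulates $h^0_\pm$ of the symmetric bundles of type $(1,d)$ by characteristic. You instead combine the trace identity $h^0_+(M)-h^0_-(M)=\frac14\sum_{x\in A[2]}q_M(x)$ with the counts of $q_M^{-1}(-1)$ from Proposition~\ref{prop:twotorsionofdivisors}. That proposition's proof uses only Lemma~\ref{lem:qformula}, so nothing is circular.

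Your route makes the lemma follow from material already in the paper plus one general formula. It also explains why the sets $\cC_d$, defined by $\#A_2^-=2d$, are exactly the characteristics with $h^0_+=2$. The citation is shorter but gives no such insight.

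Your hedge about a global sign is unnecessary: the identity holds with the sign as you wrote it, by the holomorphic Lefschetz formula. Indeed:
\begin{itemize}
\item $H^i(M)=0$ for $i>0$;
\item the fixed points of $(-1)_A$ are $A[2]$, and $\det(1-d(-1)_A)=4$;
\item for a lift $x$ with $2x\in\Lambda$, the functional equation for the canonical factor gives $\vartheta(-x)=\chi(-2x)\vartheta(x)=q_M(\bar x)\vartheta(x)$, so the normalised involution acts on the fibre at $\bar x$ by $q_M(\bar x)$.
\end{itemize}
Your check with $\vartheta_0$ is then only a consistency test; in fact it proves item~(7) on its own.

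Two side remarks in your text are inaccurate, though you never use them. First, an odd section vanishes only on $q_M^{-1}(1)$, not at every $2$-torsion point. For example, the odd sections of $L_0$ in Section~\ref{SectOdd} miss some of the six points of $q_{L_0}^{-1}(-1)$. Second, the factor in $\vartheta_j\mapsto\epsilon\,\vartheta_{\sigma(j)}$ is not a single constant. In your labelling it depends on $j$ when $\beta_2\neq0$, and that is precisely why the two fixed points $j=0,\frac d2$ then have opposite parities, giving $h^0_+=\frac d2$.
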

\begin{proof}
    This follows directly from \cite[Exercise 4.12.11]{BirkenhakeLange1992complex}.
\end{proof}

For $2\leq d\leq 6$ let $\cC_d$ be the set of characteristics $c$ such that $\#A_2^-(D)=2d$ for a symmetric even divisor $D$ with $\cO_A(D)=t^*_{\overline c}L_0$. By Proposition~\ref{prop:twotorsionofdivisors},
\[
\cC_2=\{0,\tfrac12e_1,\tfrac12f_1\},\qquad
\cC_3=\tfrac12\Lambda(L)\setminus S,\qquad
\cC_4=\tfrac12\Lambda(L)\setminus\{0,\tfrac12e_1,\tfrac12f_1,\tfrac12e_1+\tfrac12f_1\},
\]
\[
\cC_5=S,\qquad \cC_6=\{\tfrac12e_1+\tfrac12f_1\},
\]
so $\#\cC_d=3,10,12,6,1$ for $d=2,\dots,6$. By Lemma~\ref{lem:dimensionofh0}, $h^0_+(t^*_{\overline c}L_0)=2$ for every $c\in\cC_d$.

\subsection{Changing the decomposition}
A change of decomposition changes the characteristic of a symmetric line bundle. Since $q_M$ is intrinsic, this is easy to describe in terms of the functions $\bar q_{M,B}$. The main aim of the following technical lemma is to show that $\{\bar q_c, c\in \cC_d\}$ form a single orbit in each case $d=2,\dots6$.

\begin{lem}\label{lem:orbits}
Let $(A,L)$ be a $(1,d)$ polarised abelian surface.
\begin{enumerate}[(a)]
\item For $\gamma\in\Sp(\Lambda,E)$ and a symmetric $M\in\Pic^H(A)$ we have $\bar q_{M,\gamma B}=\bar q_{M,B}\circ\bar\gamma$, where $\bar\gamma\in\mathrm{GL}_4(\bF_2)$ is the reduction modulo $2$ of the matrix of $\gamma$ in the basis $B$.
\item Let $2\leq d\leq 6$ and let $\Gamma\subset\Sp(\Lambda,E)$ be the subgroup generated by
  \[
  T_i\colon f_i\mapsto f_i+e_i,\qquad
  J_i\colon (e_i,f_i)\mapsto(f_i,-e_i)\qquad(i=1,2),
  \]
  \[
  M_1\colon e_1\mapsto e_1+e_2,\ f_2\mapsto f_2-df_1,\qquad
  M_2\colon f_1\mapsto f_1+f_2,\ e_2\mapsto e_2-de_1,
  \]
  where basis vectors that are not mentioned are fixed. Then
  $\{\bar q_c : c\in\cC_d\}$ is a single orbit of the action
  $q\mapsto q\circ\bar\gamma$ of $\Gamma$.
\end{enumerate}
\end{lem}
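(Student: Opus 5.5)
For (a), I would unwind the definition. Write $\gamma B=(e_1',e_2',f_1',f_2')$ with $e_j'=\gamma e_j$, $f_j'=\gamma f_j$. Then
\[
\tfrac12\bigl(u_1e_1'+u_2e_2'+v_1f_1'+v_2f_2'\bigr)=\tfrac12\,\gamma\bigl(u_1e_1+u_2e_2+v_1f_1+v_2f_2\bigr).
\]
Modulo $\Lambda$, this point depends only on the coordinate vector $\bar\gamma(u,v)\in\bF_2^4$: the change of $(u,v)$ by $2\ZZ^4$ and the change of the matrix entries by even integers only add lattice vectors. Hence the point equals $x(\bar\gamma(u,v))$. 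Since $q_M$ is intrinsic (it depends only on $M$ and the point), we get $\bar q_{M,\gamma B}(u,v)=\bar q_{M,B}(\bar\gamma(u,v))$. I would also note that $\gamma B$ is again a symplectic basis of type $(1,d)$, so the left-hand side makes sense.

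For (b), I would first check that $\Gamma$ preserves $\{\bar q_c:c\in\cC_d\}$. By Lemma~\ref{lem:qformula} applied to the new basis, $\bar q_c\circ\bar\gamma$ is of the form $u_1v_1+du_2v_2+\ell'$, so it is again some $\bar q_{c'}$. The number of zeros of $\bar q_c$ is unchanged under composition with the bijection $\bar\gamma$, so $\#A_2^-$ is still $2d$ and $c'\in\cC_d$. This uses Proposition~\ref{prop:twotorsionofdivisors} and the definition of $\cC_d$ via counts.

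Transitivity is then a case check for $d=2,\dots,6$, with orbit sizes $3,10,12,6,1$. The case $d=6$ is trivial. For each generator I would compute its effect on the linear part $(a_1,b_1,a_2,b_2)$ by substituting into $u_1v_1+du_2v_2+a_1u_1+b_1v_1+a_2u_2+b_2v_2$. Here are the substitutions:
\begin{itemize}
\item $T_1$ sends $v_1\mapsto v_1+u_1$ in coordinates, which produces $u_1^2=u_1$ and changes $a_1\mapsto a_1+b_1+1$.
\item $J_1$ swaps $u_1,v_1$, so it swaps $a_1,b_1$.
\item $T_2$ and $J_2$ act similarly on $(a_2,b_2)$, except that $T_2$ changes $a_2$ by $d$.
\item $M_1$ and $M_2$ mix the two planes, e.g.\ $u_2\mapsto u_2+u_1$ together with $v_1\mapsto v_1+dv_2$, and I must recompute the quadratic part to confirm it is preserved modulo $2$.
\end{itemize}
With these rules I would exhibit explicit chains connecting each element of $\cC_d$ to a base point, for example $c=0$ for $d=2,3$ and $\frac12e_2$ for $d=4$.

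The main obstacle I expect is the bookkeeping for the mixing generators $M_1,M_2$ and the parity of $d$. For odd $d$, the generators $T_1,J_1,T_2,J_2$ alone act within each plane, and within a plane they move Arf-$0$ linear forms $(0,0),(1,0),(0,1)$ transitively but fix $(1,1)$. So the even classes split into those of type (Arf $0$, Arf $0$) and those of type (Arf $1$, Arf $1$). I need $M_1$ or $M_2$ to connect these, for example $(1,1,1,1)$ to $(0,0,0,0)$ in $\cC_3$. For $d=5$, the six classes in $S$ are (Arf $1$, Arf $0$) or (Arf $0$, Arf $1$), and $M_i$ must swap which plane carries the Arf invariant. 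For even $d$, $M_1,M_2$ reduce to unimodular shears that leave $u_1v_1$ invariant, and they are needed to move between $(a_2,b_2)=(0,0)$ and nonzero values in the $d=4$ case.
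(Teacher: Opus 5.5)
Your part (a) is the paper's argument. In (b), your invariance check is correct: composing with the bijection $\bar\gamma$ preserves $\#\bar q^{-1}(1)=2d$, and $\bar q_c\circ\bar\gamma$ is again of the form $u_1v_1+du_2v_2+\ell'$. It makes explicit something the paper leaves implicit. Your strategy for transitivity is also the paper's: record $q$ by $(a_1,b_1;a_2,b_2)$, compute the generators, and chain to a base point. Your analysis for $d$ odd, $d=2$ and $d=6$ is right.

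The case $d=4$ is where your roadmap goes wrong. For even $d$ one gets $\bar M_1\colon(a_1,b_1;a_2,b_2)\mapsto(a_1+a_2,b_1;a_2,b_2+db_1)=(a_1+a_2,b_1;a_2,b_2)$ and $\bar M_2\colon(a_1,b_1;a_2,b_2)\mapsto(a_1,b_1+b_2;a_2,b_2)$. So the mixing generators never change $(a_2,b_2)$. Moreover, $\cC_4$ is exactly the set with $(a_2,b_2)\neq(0,0)$. There is therefore no passage between $(a_2,b_2)=(0,0)$ and non-zero values to make; such a move would leave $\cC_4$, contradicting your own invariance step.

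What actually has to be bridged lies in the first plane. For every $d$, $\bar T_1,\bar J_1$ permute $(0,0),(1,0),(0,1)$ but fix $(a_1,b_1)=(1,1)$, exactly as in your odd-$d$ discussion. The missing step is as follows. First use $\bar T_2,\bar J_2$, which act on the non-zero pairs $(a_2,b_2)$ as $\mathrm{GL}_2(\bF_2)$, transitively and without touching $(a_1,b_1)$, to arrange $a_2=1$ (resp.\ $b_2=1$). Then $\bar M_1$ toggles $a_1$ (resp.\ $\bar M_2$ toggles $b_1$). This connects $(1,1)$ to the other three pairs and yields all $12$ elements.

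Relatedly, your base point $\frac12e_2$ for $d=4$ is not in $\cC_4$. Since $\frac14e_2\in\Lambda(L)$, you have $\frac12e_2\equiv0$. By Lemma~\ref{lem:qformula}, $b_2\equiv 2d\cdot\frac12=4\equiv0$, so $\bar q_{\frac12e_2}=u_1v_1$ takes the value $1$ at only $4$ points. Use $\frac18e_2$, i.e.\ $\frac1{2d}e_2$, instead.

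A minor point: with $\bar\gamma$ the matrix of $\gamma$ acting on $(u,v)$, $\bar T_1$ is $u_1\mapsto u_1+v_1$, so the rule is $(a_1,b_1)\mapsto(a_1,a_1+b_1+1)$. Your transposed substitution $v_1\mapsto v_1+u_1$ is the reduction of $J_1T_1J_1\in\Gamma$. It therefore does not affect the orbit computation, but it is inconsistent with your (correct) substitution for $M_1$.
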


\begin{proof}
(a) The point with coordinates $(u,v)$ with respect to $\gamma B$ is the image under $\gamma$ of the point with the same coordinates with respect to $B$, so its $B$-coordinates are $\bar\gamma(u,v)$. Since $q_M$ is intrinsic, the claim follows.

(b) One checks directly that the six generators preserve $E$. Write the functions of Lemma~\ref{lem:qformula} as $q=u_1v_1+d\,u_2v_2+a_1u_1+b_1v_1+a_2u_2+b_2v_2$ and record $q$ by $(a_1,b_1;a_2,b_2)\in\bF_2^4$. By the proof of Proposition~\ref{prop:twotorsionofdivisors}, $\{\bar q_c : c\in\cC_d\}$ corresponds to the following sets of $(a_1,b_1;a_2,b_2)$:
\[
\begin{array}{lll}
d=2\colon & (a_2,b_2)=(0,0),\ a_1b_1=0; \qquad &
d=3\colon\ a_1b_1=a_2b_2;\\[2pt]
d=4\colon & (a_2,b_2)\neq(0,0); &
d=5\colon\ a_1b_1\neq a_2b_2;\\[2pt]
d=6\colon & (a_2,b_2)=(0,0),\ a_1b_1=1. &
\end{array}
\]
A computation with the reductions modulo $2$ gives the following action on $(a_1,b_1;a_2,b_2)$:
\begin{itemize}
\item $\bar T_1$ and $\bar J_1$ send $(a_1,b_1)$ to $(a_1,a_1+b_1+1)$ and $(b_1,a_1)$, and fix $(a_2,b_2)$. They permute the three pairs with $a_1b_1=0$ transitively and fix $(1,1)$.
\item $\bar T_2$ and $\bar J_2$ fix $(a_1,b_1)$. For $d$ odd they act on $(a_2,b_2)$ as $\bar T_1,\bar J_1$ act on $(a_1,b_1)$. For $d$ even they send $(a_2,b_2)$ to $(a_2,a_2+b_2)$ and $(b_2,a_2)$, and so permute the three non-zero pairs transitively.
\item $\bar M_1\colon (a_1,b_1;a_2,b_2)\mapsto(a_1+a_2,\,b_1;\,a_2,\,b_2+db_1)$ and $\bar M_2\colon (a_1,b_1;a_2,b_2)\mapsto(a_1,\,b_1+b_2;\,a_2+da_1,\,b_2)$.
\end{itemize}
For $d=2$ and $d=6$ the claim follows from the first item. For $d=4$, first use $\bar T_2,\bar J_2$ to reach any prescribed non-zero $(a_2,b_2)$ without changing $(a_1,b_1)$. If $a_2=1$ (resp.\ $b_2=1$), then $\bar M_1$ (resp.\ $\bar M_2$) changes $a_1$ (resp.\ $b_1$) and fixes $(a_2,b_2)$; together with the first item this reaches all four pairs $(a_1,b_1)$. For $d$ odd, the groups generated by $\bar T_i,\bar J_i$ have the orbits determined by $(a_1b_1,a_2b_2)$, and $\bar M_1$ sends $(1,1;1,1)$ to $(0,1;1,0)$ and $(0,1;1,1)$ to $(1,1;1,0)$. This joins the two orbits with $a_1b_1=a_2b_2$ (for $d=3$) and the two orbits with $a_1b_1\neq a_2b_2$ (for $d=5$).
\end{proof}

\subsection{Local embeddings of hyperelliptic curves}
Let us describe a well-known construction. By a curve locally embedded into an abelian surface we mean a birational (onto the image) map $f:C\to A$ from a smooth curve $C$ to an abelian surface $A$. We call $f$ \textit{a symmetric local embedding} if $f(C)$ is invariant under $(-1)_A$.

If $C$ is hyperelliptic then one can choose a Weierstrass point $w\in C$ and consider an Abel-Jacobi embedding $\alpha_w:C\to JC$. Then, by Universal Property of Jacobians there is a homomorphism $F:JC\to A$ such that the following diagram commutes

\begin{equation}\label{UniJac}
\xymatrix@R=.9cm@C=1cm{
 C\ar[d]_{\alpha_w} \ar[r]^{ f} &  A  \ar[d]^{t_{-f(w)}}  \\
JC \ar[r]_{F} &  A 
}
\end{equation}

Now, note that $\alpha_w(C)$ is invariant under $(-1)_{JC}$ (because $-1$ restricts to the hyperelliptic involution) hence $F(\alpha_w(C))$ is invariant under $(-1)_A$. This shows that firstly, up to translation, we can always assume that the image $f(C)$ is symmetric and secondly, if $f(w)=0$ for a Weierstrass point, then $f(C)$ is symmetric. Since the translation by a 2-torsion point of a symmetric divisor is still symmetric, we only need to check if $f(w)\in A[2]$ for some Weierstrass point $w$. In particular, in the paper we will always assume $f$ is a symmetric local embedding. 
\begin{lem}\label{lemsymodd}
    Let $A$ be a $(1,d)$ polarised abelian surface with $d$ odd. Then $f:C\to A$ is a symmetric local embedding if and only if $f(w)\in A[2]$ for some (hence all) Weierstrass points $w\in C$.
\end{lem}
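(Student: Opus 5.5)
The "if" direction is already contained in the discussion preceding the statement. If $f(w)=x\in A[2]$ for a Weierstrass point $w$, then by diagram \eqref{UniJac} we have $f(C)=t_x\bigl(F(\alpha_w(C))\bigr)$. Here $F(\alpha_w(C))$ is $(-1)_A$-invariant, because $(-1)_{JC}$ preserves $\alpha_w(C)$ (it restricts to the hyperelliptic involution). A translate of a symmetric set by a $2$-torsion point is again symmetric, so $f(C)$ is symmetric. The same argument also gives the "hence all" clause once the "only if" direction is known: the difference of two Weierstrass points $w,w'$ satisfies $2(w-w')\sim 0$, so $\alpha_w(w')\in JC[2]$. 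Therefore $f(w')-f(w)=F(\alpha_w(w'))\in A[2]$, and $f(w)\in A[2]$ forces $f(w')\in A[2]$.

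The real content is the "only if" direction, and oddness of $d$ must be used there. Suppose $f(C)$ is symmetric, and set $D=f(C)$ and $x=f(w)$. Since $D'=F(\alpha_w(C))=t_{-x}D$ is also symmetric, the divisor $D$ is invariant under both $(-1)_A$ and $(-1)_A\circ t_{-x}$ composed appropriately. Explicitly, $t_{-2x}D=t_{-x}\bigl((-1)^*t_{-x}D\bigr)$-type manipulations give $t_{2x}^*D=D$. So $2x$ lies in the stabiliser of the divisor $D$, and in particular $2x\in K(\cO_A(D))=K(L')$ for the line bundle $L'=\cO_A(D)$, which is algebraically equivalent to a translate of $L$. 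Hence $K(L')=K(L)\cong(\ZZ/d)^2$.

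Next we show $2x$ is trivial, which needs more than $2x\in K(L)$. The plan is to exploit that $t_{2x}$ preserves the curve $D$ and acts on the normalisation $C$. Since $f$ is birational onto its image, the translation lifts to an automorphism $\tau$ of $C$. This $\tau$ commutes with the hyperelliptic involution up to the relation coming from $(-1)\circ t_{2x}=t_{-2x}\circ(-1)$, and $\tau$ has order dividing $d$, which is odd. Alternatively, and more cleanly, we avoid automorphisms altogether. The key observation is that $x\mapsto$ symmetry of $t_{-x}D$ means $(-1)^*D=t_{2x}^*D=D$ only determines $x$ modulo the stabiliser $\Sigma=\{y: t_y^*D=D\}\subset K(L)$. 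Replacing $w$ by the point's image under translation, we note that $f(w)+\Sigma$ consists of points $y$ with $t_{-y}D$ symmetric. Since $\Sigma$ has odd order (it is a subgroup of $(\ZZ/d)^2$), the equation $2z=2x$ with $z\in x+\Sigma$ can be solved with $2z=0$: write $2x=s\in\Sigma$ and take $z=x-s'$, where $2s'=s$, which is possible because $2$ is invertible on $\Sigma$. Then $z\in A[2]$ and $t_{-z}D=t_{s'}t_{-x}D=t_{-x}D$ is symmetric.

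It then remains to show that $z=f(w)$ itself, rather than some translate of it, which is the main obstacle. We must show $s'=0$, i.e.\ $\Sigma$ acts trivially, or else that $f(w)$ was chosen in the right coset. For this we use the fixed points. Because $-1$ induces the hyperelliptic involution on $C$ (after translation), the images of the Weierstrass points are exactly the points of $D$ fixed by the involution $y\mapsto 2x-y$, counted on the normalisation. Symmetry of $D$ means $y\mapsto -y$ also lifts to an involution $\iota$ of $C$. The composition of these two involutions is the lift of $t_{2x}$, so $\iota$ is the hyperelliptic involution composed with an automorphism $\tau$ of odd order. The involution $\iota$ has quotient of genus $0$ or more, and we would argue as follows. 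In $\mathrm{Aut}(C)$ the hyperelliptic involution $h$ is central, and $\iota=h\tau$ is an involution, so $\tau^2=1$. Combined with $\tau^d=1$ for odd $d$, this gives $\tau=1$. Hence $t_{2x}$ acts trivially on $D$, which forces $2x=0$ because a translation by a nonzero point has no fixed points on $A$. Therefore $f(w)=x\in A[2]$.
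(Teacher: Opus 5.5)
Your argument is correct and is essentially the paper's: $f(C)$ is invariant under $t_{2f(w)}$, the point $2f(w)$ has odd order because it lies in $K(L)$, and the lift $\tau$ of $t_{2f(w)}$ to $C$ satisfies $\tau^2=1$, so $\tau=1$ and $2f(w)=0$. You get $\tau^2=1$ from the centrality of the hyperelliptic involution and $\iota=h\tau$ (the paper instead cites Ramanan on fixed-point-free automorphisms, and gets the odd order via characteristics rather than the stabiliser), while your middle paragraph producing $z\in x+\Sigma$ does not prove the claim, is superseded by your last paragraph, and can simply be deleted.
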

\begin{proof}
    One direction has already been proved, so assume $f(C)$ is symmetric and $\cO(f(C))$ is of characteristic $c$. Then $c\in A[2d]$ since the characteristic is a 2-torsion point up to the elements of the kernel of the polarising isogeny.    
    Let $w\in C$ be a Weierstrass point and consider $D=t^*_{-f(w)}f(C)$. Firstly, $\cO(D)$ is of characteristic $c-f(w)$ but since it is a symmetric line bundle, its characteristic also satisfies $c-f(w)\in A[2d]$. In particular $f(w)\in A[2d]$. Now, note that $f(C)$ is invariant under $t_{2f(w)}$. To show it, assume $x\in f(C)$. From the symmetry of $f(C)$, we get that $-x\in f(C)$, so $-x-f(w)\in D$. Then, using symmetry of $D$ one gets $x+f(w)\in D$ and hence $x+2f(w)\in f(C)$. Now, $t^*_{2f(w)}|_{f(C)}$ is an automorphism that is either identity or fixed point free that pulls back to a group of fixed point free automorphisms on $C$ so it has to be either identity or an involution, see \cite[p.239]{Ramanan}. Hence $f(w)\in A[4]\cap A[2d]$ for $d$ odd, so $f(w)\in A[2]$ and we are done.   
\end{proof}

\begin{rem}
    Note that $f(w)\in A[2]$ for a Weierstrass point $w$ is not a necessary condition for $f$ to be a symmetric local embedding for an even $d$. Let $(A, L_A)$ be a $(1,4)$ polarised abelian surface and $C_A$ a hyperelliptic Klein covering of a genus 2 curve, see \cite{BorowkaOrtega2019hyperelliptic}. Assume $f:C_A\to A$ satisfies $f(w)=0$. Note that $C_A$ is invariant under the Klein group $K(L_A)[2]$. Let $x\in K(L_A)$ be a 4-torsion point. Then $t_x(C_A)$ is symmetric because $-t_x(c)=-c-x=t_x(-c-2x)$ for any point $c\in C_A$ even though Weierstrass points are mapped to $4$-torsion points.
\end{rem}

Let us show a lemma that will be used throughout the paper as it connects the type of the polarisation, the number of 2-torsion points with odd multiplicity and the geometric genus of a hyperelliptic curve.
\begin{lem}
\label{lem:sing_count}
Let $(A, L)$ be a $(1,d)$ polarised abelian surface and let $D$ be a reduced effective 
symmetric divisor on $A$ whose normalisation $\nu \colon C \to D$ is a
smooth hyperelliptic curve of genus $g$, such that $(-1)_A$ lifts through
$\nu$ to the hyperelliptic involution on $C$. Then
\[ 2g + 2 \;\leq\; \# A_2^-(D) \,+\, 2\bigl(p_a(D) - g\bigr), \]
with equality if and only if every singularity of $D$ is an ordinary node
located at a $2$ torsion point of $A$, with both preimages under $\nu$
being Weierstrass points of $C$.
\end{lem}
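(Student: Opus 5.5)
The plan is to count the $2g+2$ Weierstrass points of $C$ according to where $\nu$ sends them. Since $(-1)_A$ lifts through $\nu$ to the hyperelliptic involution $\iota$, a point $p\in C$ is fixed by $\iota$ exactly when it is a Weierstrass point. Hence $\nu(w)$ is a fixed point of $(-1)_A$, i.e.\ $\nu(w)\in A[2]$, for every Weierstrass point $w$. Conversely, a point of $C$ lying over a $2$-torsion point either is a Weierstrass point or comes in a pair $\{p,\iota p\}$ with the same image.

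For each $x\in A[2]\cap D$, let $W_x$ be the number of Weierstrass points over $x$, and let $2n_x$ be the number of non-Weierstrass preimages. Then the number of branches of $D$ at $x$ is $r_x=W_x+2n_x$. Summing over $x$ gives $\sum_x W_x=2g+2$.

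The task is therefore to bound each $W_x$ by local data. I will use two standard facts.
\begin{itemize}
\item The multiplicity satisfies $\mult_x D\ge r_x$, with equality iff all branches are smooth.
\item The delta invariant satisfies $\delta_x\ge \binom{r_x}{2}$, with equality for ordinary $r_x$-fold points, and more generally $\delta_x\ge \binom{m_x}{2}$ where $m_x=\mult_x D$.
\end{itemize}
Globally, $p_a(D)-g=\sum_{y\in\Sing D}\delta_y$.

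I claim the local inequality
\[
W_x\le \epsilon_x+2\delta_x ,
\]
where $\epsilon_x=1$ if $\mult_xD$ is odd, i.e.\ $x\in A_2^-(D)$, and $\epsilon_x=0$ otherwise. To prove it, first note that $W_x\le r_x\le m_x$. If $m_x\le1$, then $W_x=m_x=\epsilon_x$ and $\delta_x=0$. If $m_x\ge2$, then $2\delta_x\ge m_x(m_x-1)\ge m_x\ge W_x$, and the inequality holds even without $\epsilon_x$. For $m_x=2$ the bound needs care to get the sharp case: we have $2\delta_x\ge2\ge W_x$, with $2\delta_x=2=W_x$ iff the point is a node ($\delta_x=1$) with two branches, both Weierstrass. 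For $m_x\ge3$, $m_x(m_x-1)>m_x$, so the inequality is strict.

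Summing over $x\in A[2]$ then gives
\[
2g+2\le \#A_2^-(D)+2\sum_{x\in A[2]}\delta_x\le \#A_2^-(D)+2\bigl(p_a(D)-g\bigr),
\]
because the $\delta$ invariants of non-$2$-torsion singular points are non-negative. Note that points of $A_2^-(D)$ have odd multiplicity, so they lie on $D$; $2$-torsion points not on $D$ contribute $0$ on both sides.

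For the equality case, every step must be sharp. First, there must be no singular points outside $A[2]$. Second, at each smooth $2$-torsion point, $W_x=1=\epsilon_x$ automatically, since the unique preimage is fixed by $\iota$. Third, each singular $2$-torsion point must be the $m_x=2$ sharp case, i.e.\ a node with both preimages Weierstrass. Singular points with $m_x\ge3$ are excluded because there the inequality is strict. Conversely, under these conditions each local inequality is an equality, giving the "if" direction.

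The main obstacle is making the bookkeeping at $m_x=2$ airtight. A tacnode or a cusp at a $2$-torsion point has $\delta_x\ge1$ with $W_x\le2$, and I need to check that equality really forces $\delta_x=1$ with two smooth transversal branches. A cusp has $r_x=1$, so $W_x\le1<2\delta_x$, which is strict. A tacnode has $\delta_x\ge2$, which is also strict. So only the ordinary node survives.
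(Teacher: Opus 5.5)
Your proposal is correct and follows essentially the same route as the paper: both count the $2g+2$ Weierstrass points by their images in $A[2]$, bound the number lying over each singular $2$-torsion point by $2\delta_x$, and sum using $p_a(D)-g=\sum_y\delta_y$. The only difference is the local estimate: you use $W_x\le m_x\le 2\binom{m_x}{2}\le 2\delta_x$, while the paper uses $W_s\le r_s\le\delta_s+1\le 2\delta_s$, and both are sharp exactly at nodes with two Weierstrass preimages.
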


\begin{proof}
The hyperelliptic involution $\iota$ maps Weierstrass points to $A[2]$. If it maps to a smooth point
$s$ of $D$, then $\mult_s(D) = 1$, so $s \in A_2^-(D)$, and $\nu$ being
bijective over the smooth locus, distinct Weierstrass points give distinct
points of $A_2^-(D)$. If it maps to a singular point $s$, then $s \in
A[2]$ and the Weierstrass point corresponds to a branch of $D$ at $s$ preserved
by $(-1)$; hence $s$ supports at most $r_s$ such fixed points, where
$r_s$ is the number of branches. Since $r_s \leq \delta_s + 1$ for any
curve singularity, with equality precisely for nodes, and since
$\sum_s \delta_s = p_a(D) - g$, the singular locus supports at most
$2(p_a(D)-g)$ fixed points, with equality forcing every singularity to be
a node at a point of $A[2]$ with both branches preserved. Summing the two
contributions gives the claim.
\end{proof}

\section{Hyperelliptic genus \texorpdfstring{$g$}{g} curves on \texorpdfstring{$(1,g)$}{(1,g)} polarised surfaces}\label{Sgenusg1g}
This section is devoted to hyperelliptic curves with one node. Let us start with the following lemma.
\begin{lem}
\label{lem:symmdivisor}
    Let $(A,L)$ be a $(1,g)$ polarised abelian surface and $D \in |L|$ an irreducible curve on $A$ passing through $g+1$ points of $A[2]$ with multiplicity 1. Then $(-1)^*D = D$. 
\end{lem}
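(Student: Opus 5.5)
The plan is to compare $D$ with $D':=(-1)^*D$ and show they must coincide by intersection numbers. First I will check that $D'$ lies in the same algebraic class as $D$. Indeed, $(-1)^*$ acts trivially on $\NS(A)$, so $\cO_A(D')$ is algebraically equivalent to $L$ and $(D')^2=D^2=L^2=2g$. The curve $D'$ is also irreducible, since it is the image of the irreducible curve $D$ under an automorphism.

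Next I will exploit the $2$ torsion points. Every $x\in A[2]$ satisfies $-x=x$. Hence if $D$ passes through $x$ with multiplicity $1$, then so does $D'$, because $\mult_x D'=\mult_{-x}D=\mult_x D=1$. So $D$ and $D'$ share at least the $g+1$ given points of $A[2]$.

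The key step is the intersection argument. Suppose $D\neq D'$. Since both are irreducible, they have no common component, and
\[
D\cdot D'=\sum_{p}(D\cdot D')_p\;\geq\;\sum_{x}\mult_xD\cdot\mult_xD'\;\geq\; g+1,
\]
where the middle sum runs over the $g+1$ given points of $A[2]$. This lower bound alone is too weak, since $D\cdot D'=2g\geq g+1$. So I need each $2$-torsion point to contribute $2$, i.e.\ $D$ and $D'$ must be tangent there.

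To get tangency, I will use that $(-1)_A$ acts on $T_xA$ as $-\mathrm{id}$ at a $2$ torsion point $x$. It therefore maps the tangent line of $D$ at the smooth point $x$ to the same line, read through $-\mathrm{id}$. Thus $T_xD'=(-1)_*T_xD=T_xD$. Two smooth branches through $x$ with the same tangent line have local intersection number at least $2$. This gives $D\cdot D'\geq 2(g+1)>2g=D\cdot D'$, a contradiction, and so $D=D'$.

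I expect the tangency step to be the only subtle point. The thing to verify carefully is that "smooth at $x$ with multiplicity $1$" gives a well-defined tangent line, and that the differential of $-1$ at $x$ is $-\mathrm{id}$ after identifying $T_xA\cong V$ by translation. Both follow from writing $(-1)_A$ locally as $z\mapsto 2x-z$ in the coordinates of $V$.
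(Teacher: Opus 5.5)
Your proposal is correct and follows the same argument as the paper. The paper also assumes $(-1)^*D\neq D$, uses $D\cdot(-1)^*D=D^2=2g$, and notes that at each of the $g+1$ two-torsion points both curves are smooth with the same tangent line, so the total intersection is at least $2g+2$, a contradiction.
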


\begin{proof}
    Assume that $(-1)^*(D) \neq D$. Since $(-1)^*(D)$ is algebraically equivalent to $D$, we have $$D \cdot (-1)^*D = D^2=2g.$$ Let $p \in A[2] \cap D$ be a point such that $\operatorname{mult}_p(D) = 1$. Then $p \in (-1)^*D$ and the local intersection multiplicity at $p$ is $$(D \cdot (-1)^*D)_p \geq 2$$ because both $D$ and $(-1)^*D$ pass through the smooth point $p$ with the same tangent line. But then $$2g = D \cdot (-1)^*D \geq \sum_{p \in {A[2] \cap D}} (D \cdot (-1)^*D)_p \geq 2g+2,$$ which is a contradiction.
\end{proof}

\begin{lem}
	\label{lem:odd_mult_points}
	Let $2 \leq g \leq 7$ be a positive integer, $(A, L)$ a $(1,g)$ polarised abelian surface, $C$ a smooth genus-$g$ curve, and $f:C \to A$ a local embedding with \(f(C) \in |L|\). Then $C$ is hyperelliptic if and only if there exists a point $x \in A$ such that $t_x(f(C))$ passes through $2g$ 2-torsion points with multiplicity 1. 
\end{lem}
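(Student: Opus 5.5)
The plan is to prove the two implications separately. The forward direction uses the universal-property diagram \eqref{UniJac} together with the count of Weierstrass points. The converse uses Lemma~\ref{lem:symmdivisor} and Riemann--Hurwitz. Throughout I will use that $L^2=2g$, so every $D\in|L|$ has $p_a(D)=g+1$. Since $f$ is birational onto its image and $C$ has genus $g$, the curve $f(C)$ is irreducible with total $\delta$-invariant $\sum_s\delta_s=p_a(f(C))-g=1$. In particular $f(C)$ has exactly one singular point, with $\delta=1$, so it is a node or a cusp.

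\emph{($\Rightarrow$)} Assume $C$ is hyperelliptic and fix a Weierstrass point $w$. Put $x=-f(w)$ and $D=t_x(f(C))$. By \eqref{UniJac}, $D=F(\alpha_w(C))$, and $(-1)_{JC}$ restricts to the hyperelliptic involution on $\alpha_w(C)$. Since $F$ is a homomorphism, $D$ is symmetric and $(-1)_A$ lifts through the normalisation $t_x\circ f$ to the hyperelliptic involution $\iota$. The $2g+2$ Weierstrass points are fixed by $\iota$, so they are sent into $A[2]$.

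Now count where they go. The normalisation is bijective over the smooth locus of $D$, so the Weierstrass points lying over smooth points of $D$ give distinct $2$-torsion points of multiplicity $1$. Over the unique singular point lie at most $r_s\le\delta_s+1=2$ points of $C$. Hence at least $2g$ Weierstrass points map to distinct smooth $2$-torsion points of $D$, which is the required statement. Equivalently, this is the inequality of Lemma~\ref{lem:sing_count} with $p_a(D)-g=1$.

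\emph{($\Leftarrow$)} Assume $D=t_x(f(C))$ passes through $2g$ points of $A[2]$, each with multiplicity $1$. The curve $D$ is irreducible, being the image of $C$, and $D\in|t_x^*L|$ is again a $(1,g)$ polarisation. Since $2g\ge g+1$, Lemma~\ref{lem:symmdivisor} gives $(-1)^*D=D$. Then $(-1)_A$ restricts to an automorphism of $D$. By the universal property of normalisation it lifts to an involution $\iota$ of $C$. This involution is nontrivial, because $D\not\subset A[2]$.

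Each of the $2g$ given $2$-torsion points is a smooth point of $D$. Its unique preimage in $C$ is therefore a fixed point of $\iota$, so $\iota$ has $r\ge 2g$ fixed points. Let $h$ be the genus of $C/\iota$. Riemann--Hurwitz gives
\[
2g-2=2(2h-2)+r\ \ge\ 4h-4+2g,
\]
which forces $h=0$. Hence $C\to C/\iota\cong\bP^1$ is a double cover and $C$ is hyperelliptic, as $g\ge2$.

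I do not expect a serious obstacle here. The only delicate points are in the forward direction: making sure the Weierstrass points that are not over the singularity land on distinct smooth $2$-torsion points, and using the fact that the $\delta=1$ singularity absorbs at most two of them. The hypothesis $g\le 7$ does not seem to be needed in this argument, so I would check whether it is used only later, for example to guarantee that such configurations exist.
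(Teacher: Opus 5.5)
Your proof is correct and follows the paper's argument. The forward direction translates a Weierstrass point to $0$ and counts Weierstrass points against the single $\delta=1$ singularity, and the converse applies Lemma~\ref{lem:symmdivisor} and then Riemann--Hurwitz to the lifted involution, as the paper does. The paper first rules out a cusp using the parity of $\#A_2^-(D)$ from Proposition~\ref{prop:twotorsionofdivisors}; your bound $r_s\le\delta_s+1$ gives the required $2g$ points directly, and you are right that $g\le7$ is not used.
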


\begin{proof} 
	Assume $C$ is hyperelliptic and $w$ is a Weierstrass point on $C$. Consider $D:=t_{-f(w)}f(C)$, so the point $w$ is mapped to $0$. Then $D$ is a symmetric divisor on $A$, and $(-1)_A$ lifts through the normalisation $C \to D$ to the hyperelliptic involution on $C$. Since $p_a(D) = g+1,$ $D$ has either one node or one cusp and since $D$ is symmetric the singularity has to be in a 2-torsion point of $A$. If $D$ has a cusp then $\# A_2^-(D) = 2g+1$, which is impossible, since $\# A_2^-(D)$ is always even by Proposition \ref{prop:twotorsionofdivisors}. 
    Thus, $D$ has a node, so there are $2g$ Weierstrass points on $C$ that map to distinct $2$-torsion points of $A$ with multiplicity 1 under $t_{-f(w)}\circ f$, so the claim follows.

	Now, assume that $D := t_x(f(C))$ passes through $2g$ points in $A[2]$ with multiplicity 1.    
    Then $D$ is a symmetric divisor by Lemma \ref{lem:symmdivisor}. Let $\tilde{A} \to A$ be the resolution of $D\subset A$ such that the preimage of $D$ is the curve $C$. Consider the image $C^-$ of $C$ under the composition $C \to \tilde{A} \to A \to A/(-1)$. By the assumptions, the ramification degree of the covering $C \to C^-$ is at least $2g$, hence the Riemann-Hurwitz formula yields $$2g-2 \geq 2(2g(C^-)-2)+2g.$$ This implies that $g(C^-) \leq \frac12$, hence $C^- \simeq \mathbb{P}^1$ and $C$ is a smooth hyperelliptic curve. 
\end{proof}

\begin{lem}
	\label{lem:unique_curve}
    Let $2 \leq g \leq 6$, let $C$ be a smooth hyperelliptic genus-$g$ curve, and let $f: C \to A$ be a local embedding into an abelian surface $A$ such that $\mathcal{O}_A(f(C))$ is of type $(1,g)$. Then 
    \begin{itemize}
        \item[i)] There exists a unique point $z \in A$ such that the divisor $f_0(C)$, where $f_0 := t_z\circ f$, is symmetric with a node at $0$. Moreover, the two preimages of the node are Weierstrass points of $C$. 
        \item[ii)] If $f:C \to A$ and $f':C' \to A$ are local embeddings as in $(i)$ and $\mathcal{O}_A(f(C)) = \mathcal{O}_A(f'(C'))$, then $f(C) = f'(C')$.
    \end{itemize}

\end{lem}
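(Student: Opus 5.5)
For part (i) I will first place a Weierstrass point at the origin and then move the node there. Fix a Weierstrass point $w$ and set $D_1:=t_{-f(w)}f(C)$. As in the discussion around \eqref{UniJac}, $D_1$ is symmetric and $(-1)_A$ lifts to the hyperelliptic involution.

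Since $D_1^2=2g$, we have $p_a(D_1)=g+1$, so $D_1$ has exactly one singular point, and it has $\delta=1$. That point is fixed by $(-1)$, so it lies in $A[2]$. As in the proof of Lemma~\ref{lem:odd_mult_points}, a cusp would give $\#A_2^-(D_1)=2g+1$, which is odd and contradicts Proposition~\ref{prop:twotorsionofdivisors}. So the singularity is a node at some $p\in A[2]$.

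Next I will show that both preimages of $p$ are Weierstrass points. The involution swaps or fixes the two branches. In the equality case of Lemma~\ref{lem:sing_count} both branches are preserved. To reach equality I will count Weierstrass points: all $2g+2$ of them map to $A[2]$. At most $\#A_2^-(D_1)$ of them map to smooth points, and at most two map to $p$. Hence $2g+2\le \#A_2^-(D_1)+2$.

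On the other hand, $\#A_2^-(D_1)\le 2g$. This bound is the step needing care. I will get it from Proposition~\ref{prop:twotorsionofdivisors} together with the fact that at a node $\mult_p=2$ is even, so $p\notin A_2^-$. Then I compare the $A_2^-$ counts listed there with $2g$ for each $g\le 6$, using the parity of $D_1$. Alternatively I will use the inequality of Lemma~\ref{lem:sing_count} directly, together with the constraint that $\#A_2^-$ takes only the listed values. If both preimages were swapped, at most $2g$ Weierstrass points could be accounted for, so the count forces both preimages of $p$ to be Weierstrass points.

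With that established, set $z:=-f(w)-p$, so that $f_0:=t_z\circ f$ has image $t_{-p}D_1$. This is symmetric, since $p\in A[2]$, and has its node at $0$. This proves existence in (i).

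For uniqueness in (i), suppose $t_{z'}f(C)$ is also symmetric with a node at $0$. Then $f(C)$ has a unique singular point, which forces $z'=z$. Indeed, the node of $f(C)$ is its only singular point, and both translates send that point to $0$.

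For part (ii), let $D=f_0(C)$ and $D'=f_0'(C')$. Both are symmetric and noded at $0$. Since $\cO(f(C))=\cO(f'(C'))$ and the same normalisation is used, the translates have the same line bundle, provided $z$ is determined intrinsically. I expect this to be the main obstacle: making sure the translations agree requires an intrinsic description of $z$. I plan to circumvent it by working directly with $M=\cO(D)$ instead. $D$ and $D'$ are both even symmetric divisors with $\#A_2^-=2g$ lying in the same symmetric bundle $M$ (the one with characteristic in $\cC_g$). By Lemma~\ref{lem:dimensionofh0}, $h^0_+(M)=2$, so $D,D'$ lie in a pencil $\bP H^0_+(M)$ by Lemma~\ref{lem:parity}.

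Every member of this pencil vanishes at $0$ to even order, hence has multiplicity at least $2$ there. The condition of being singular at $0$ is therefore automatic, and I must use instead the $2g$ odd points. Every member passes through the same $2g$ points of $A_2^-$, because $A_2^-$ is determined by $q_M$. If $D\ne D'$, then $D\cdot D'\ge 4+2g>2g=L^2$, counting intersection multiplicity at least $4$ at the double point $0$ and at least $1$ at each of the $2g$ odd points. This contradicts $D\cdot D'=2g$, given that $D$ is irreducible and so shares no component with $D'$. Hence $D=D'$.

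Finally, since $\cO(f(C))=\cO(f'(C'))$ and the pencil argument pins down the unique nodal-at-$0$ symmetric member of each relevant bundle, I will conclude $z=z'$ and $f(C)=f'(C')$.
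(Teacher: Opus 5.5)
Your overall route is the paper's: move a Weierstrass point to $0$, use $p_a(D_1)=g+1$ and parity to get a single node at some $p\in A[2]$, translate it to $0$, get uniqueness of $z$ from uniqueness of the singular point, and prove (ii) by $D\cdot D'\ge 2g+4>2g$. However, two steps do not work as you wrote them.

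The first is in (i). The bound $\#A_2^-(D_1)\le 2g$ does not follow from Proposition~\ref{prop:twotorsionofdivisors}, even combined with $p\notin A_2^-(D_1)$ and the parity of $D_1$. For $g=2,3,4$ the admissible values are $\{4,8,12\}$, $\{6,10\}$ and $\{4,8,12\}$, for even and odd divisors alike, so values above $2g$ are not excluded. Your alternative via Lemma~\ref{lem:sing_count} only gives the lower bound $\#A_2^-(D_1)\ge 2g$. The missing ingredient is the reverse count, which you already use implicitly when you exclude the cusp. A point of $A_2^-(D_1)$ has odd multiplicity, so it is a smooth point of $D_1$, and its unique preimage in $C$ is fixed by the hyperelliptic involution. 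Hence $\#A_2^-(D_1)=2g+2-\epsilon$, where $\epsilon\in\{0,2\}$ is the number of Weierstrass points over the node. The case $\epsilon=0$ is impossible: $2g+2$ is $\equiv 2\pmod 4$ for $g$ even and $\equiv 0\pmod 4$ for $g$ odd, while the listed values are $\equiv 0$, resp.\ $\equiv 2\pmod 4$. A quicker route is the argument of Step~2 of Lemma~\ref{lem:existence}. There $(-1)_A$ acts as $-\mathrm{id}$ on $T_pA$, so it preserves both tangent lines of the node and hence both branches, and no counting is needed. The paper's own phrase ``the assumptions force the equality'' is terse exactly at this point, so you are right to flag it, but your proposed fix does not close it.

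The second is in (ii), where you misread the hypothesis. ``As in (i)'' means that $f(C)$ and $f'(C')$ are already symmetric with a node at $0$. Otherwise (ii) is false: for $x\in K(L)\setminus\{0\}$, with $L:=\cO_A(f(C))$, the embedding $t_x\circ f$ has the same line bundle as $f$ but a different image, since its unique node moves by $x$. Under your reading, the claim that $f_0(C)$ and $f'_0(C')$ lie in ``the same symmetric bundle $M$ (the one with characteristic in $\cC_g$)'' is precisely what is unproved. The two bundles are $t_q^*L$ and $t_{q'}^*L$, where $q,q'$ are the nodes of $f(C)$ and $f'(C')$, and they agree only if $q-q'\in K(L)$. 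Moreover $\#\cC_g=3,10,12,6$ for $g=2,\dots,5$, so there is no distinguished such bundle, and your concluding ``$z=z'$'' does not follow. With the correct reading the obstacle disappears, because $\cO(f(C))=\cO(f'(C'))=M$ by hypothesis. Both curves are then irreducible even symmetric divisors in $|M|$, so both have multiplicity $2$ at $0$ and pass through the $2g$ points of $q_M^{-1}(-1)$. Your intersection count is then exactly the paper's proof, and the pencil $\bP H^0_+(M)$ and Lemma~\ref{lem:dimensionofh0} are not needed.
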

\begin{proof}
Take $x\in A$ such that $t_x\circ f$ is a symmetric local embedding. Then, the assumptions force the equality in Lemma \ref{lem:sing_count}, so we have a unique node in a $2$-torsion point $y$. Then $t_{-y}\circ t_x\circ f$ is a symmetric local embedding with a unique node at $0$, hence $z=x-y$ is unique.

    To show $ii)$, assume that $f(C) \neq f'(C')$. Then $$f(C)\cdot f'(C') = \mathcal{O}_A(f(C))^2 = 2g,$$ however, since $f(C)$ and $f'(C')$ both have a node at 0 and pass through the same $2g$ two-torsion points with multiplicity 1 (since $\mathcal{O}_A(f(C)) = \mathcal{O}_A(f'(C'))$) we have $$f(C) \cdot f'(C') \geq 2g + 4 > 2g,$$ which is a contradiction. Thus, $f(C) = f'(C')$. 
\end{proof}

\begin{cor}
    \label{cor:nocurvesofhighgenus}
    Let $g \geq 7$ and let $(A,L)$ be a $(1,g)$ polarised abelian surface. Then there are no hyperelliptic curves of geometric genus $g$ in the algebraic equivalence class of $L$. 
\end{cor}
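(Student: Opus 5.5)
The corollary asks to rule out hyperelliptic curves of geometric genus $g$ in the class of $L$. The argument is by contradiction: a symmetric such curve would need more points of odd multiplicity at $2$-torsion points than a symmetric divisor can have.

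Suppose $C$ is a smooth hyperelliptic curve of genus $g\geq 7$ and $f\colon C\to A$ is a local embedding with $f(C)$ algebraically equivalent to $L$. By the discussion after diagram \eqref{UniJac}, after translating we may assume $f$ is a symmetric local embedding with $f(w)=0$ for a Weierstrass point $w$. Set $D:=f(C)$. Then $D$ is a reduced, irreducible, symmetric divisor, and $(-1)_A$ lifts through the normalisation $\nu\colon C\to D$ to the hyperelliptic involution.

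Next compute the arithmetic genus. By adjunction on an abelian surface, $p_a(D)=D^2/2+1=g+1$, so $p_a(D)-g=1$. Lemma~\ref{lem:sing_count} then gives
\[
2g+2\leq \#A_2^-(D)+2,
\]
that is, $\#A_2^-(D)\geq 2g\geq 14$.

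It remains to bound $\#A_2^-(D)$ from above. The divisor $D$ is symmetric and its line bundle is one of the sixteen symmetric bundles $t^*_{\overline c}L_0$. By Proposition~\ref{prop:twotorsionofdivisors}, in every case, whether $D$ is even or odd and whether $g$ is even or odd, $\#A_2^-(D)\leq 12$. Indeed, the possible values are $4,8,12$ when $g$ is even and $6,10$ when $g$ is odd. Since $2g\geq 14>12$, this is a contradiction, which proves the corollary.

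The only delicate point is the reduction to a symmetric divisor. Although $D$ is only algebraically equivalent to $L$, once $D$ is symmetric its line bundle is a symmetric element of $\Pic^H(A)$. So Proposition~\ref{prop:twotorsionofdivisors} applies, and we may use it without knowing which characteristic occurs.
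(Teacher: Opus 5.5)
Your proof is correct and follows essentially the same route as the paper: you translate so that a Weierstrass point goes to $0$, making $f(C)$ symmetric, then use $p_a(f(C))=g+1$ to force $\#A_2^-(f(C))\geq 2g\geq 14$, which contradicts the bound $\#A_2^-\leq 12$ from Proposition~\ref{prop:twotorsionofdivisors}. The differences are cosmetic: you get the lower bound directly from Lemma~\ref{lem:sing_count} and symmetry from the discussion after diagram~\eqref{UniJac}, whereas the paper uses the node-or-cusp dichotomy and then Lemma~\ref{lem:symmdivisor}.
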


\begin{proof}

Suppose that such a hyperelliptic curve $f: C \to A$ exists for some $g \geq 7$. Arguing as in the proof of Lemma \ref{lem:odd_mult_points} we can choose a local embedding $f$ that maps Weierstrass points of C to $A[2]$. Since $p_a(f(C)) = g+1$, the curve $f(C)$ has either a node or a cusp, so it passes through at least $2g$ points of $A[2]$ with multiplicity $1$, so it is symmetric by Lemma \ref{lem:symmdivisor}. Then $\#A_2^-(f(C)) \geq 2g \geq 14$, which is impossible by Proposition \ref{prop:twotorsionofdivisors}.
   
\end{proof}
\begin{rem}
    Corollary \ref{cor:nocurvesofhighgenus} can also be deduced from the emptiness of the Brill-Noether locus proved in \cite{KLM}.
\end{rem}

In the rest of the section we consider the locus of abelian surfaces $(A,H) \in \mathcal{A}_2^{(1,d)}$ satisfying the following two conditions:
\begin{enumerate}
\item[(G1)] $\operatorname{NS}(A) = \mathbb{Z}H$; in particular, every curve in the algebraic equivalence class of $H$ is reduced and irreducible;
\item[(G2)] for every $c\in\cC_d$, the section of $H^0_+(L_c)$ vanishing at $0$ has non-degenerate quadratic term at $0$.
\end{enumerate}

We will show that (G2) is non-empty, hence this locus is a complement of a countable union of proper closed subvarieties, so we say that $(A,H)$ is \emph{very general}, if it lies in this locus. The next lemma is crucial for constructing enough hyperelliptic curves with one node.
 
\begin{lem}\label{lem:existence}
Let $2\leq d\leq 6$, let $(A,H)$ be a very general $(1,d)$ polarised abelian
surface, and let $L=t^*_{\overline c}L_0\in\Pic^H_s(A)$ with $c\in\cC_d$, so
that $h^0_+(L)=2$. Let $\theta_1,\theta_2$ be a basis of $H^0_+(L)$ and put
\[
s:=\theta_2(0)\,\theta_1-\theta_1(0)\,\theta_2 .
\]
Then $s\neq0$, every section in $H^0_+(L)$ vanishing at $0$ is a multiple of
$s$, and $D:=Z(s)$ is an irreducible curve whose only singularity is an
ordinary node at $0$ and whose normalisation $C$ is a hyperelliptic curve of
genus $d$. Moreover, $(-1)_A$ lifts to the hyperelliptic involution of $C$.
In particular, up to a non-zero constant, $s$ is the unique section of
$H^0_+(L)$ whose zero locus is a hyperelliptic curve of geometric genus $d$
with a node at $0$.
\end{lem}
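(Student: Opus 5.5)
We will argue in four steps: $s$ is nonzero and unique, $D$ is reduced and irreducible, the singular locus is exactly the node at $0$, and finally the Riemann--Hurwitz argument of Lemma~\ref{lem:odd_mult_points} gives hyperellipticity.

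\emph{Step 1: $s\neq0$ and uniqueness.} Since $h^0_+(L)=2$, evaluation at $0$ is a linear functional $H^0_+(L)\to\bC$ (after trivialising $L$ at $0$). Its kernel has dimension at least $1$, and $s$ spans it once we know the functional is nonzero. If both $\theta_1(0)$ and $\theta_2(0)$ vanished, every section of $H^0_+(L)$ would vanish at $0$. Each such section is even, so by Lemma~\ref{lem:parity} its multiplicity at $0$ would be at least $2$. Then every $D'\in|L|$ coming from $H^0_+(L)$ has $\mult_0 D'\ge 2$, and we would also impose vanishing at one further point of the pencil. This contradicts (G2) in combination with the following count. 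Alternatively, and more cleanly, we can apply (G2) directly: the section of $H^0_+(L)$ vanishing at $0$ is assumed unique, i.e.\ the kernel is one-dimensional. So the kernel is spanned by $s$ provided $s\ne0$. If both values vanish, then the kernel is all of $H^0_+(L)$, and we choose $\theta_1,\theta_2$ with independent quadratic terms, which violates the formulation of (G2) as a single section. I will read (G2) as including that the kernel is one-dimensional, and this gives $s\neq0$ and uniqueness up to scalar.

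\emph{Step 2: $D$ is reduced and irreducible.} By (G1), $D=Z(s)\in|L|$ is reduced and irreducible. It is symmetric and even by Lemma~\ref{lem:parity}. Its quadratic term at $0$ is non-degenerate by (G2), so $0$ is an ordinary node. Hence $\mult_0 D=2$ and $0\in A_2^+(D)$.

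\emph{Step 3: the only singularity is the node at $0$.} Since $c\in\cC_d$, Proposition~\ref{prop:twotorsionofdivisors} gives $\#A_2^-(D)=2d$, so $D$ passes through $2d$ points of $A[2]$ with odd multiplicity. Suppose one of them had multiplicity $\ge3$. Compare $D$ with a translate, or better with $D'=Z(\theta)$ for some other $\theta\in H^0_+(L)$ not vanishing at $0$. Every member of $H^0_+(L)$ has odd multiplicity at each point of $A_2^-(D)$, so every member passes through these $2d$ points. Then $D\cdot D'=L^2=2d$ is accounted for exactly by these $2d$ points, each contributing at least $\mult_pD\cdot\mult_pD'$. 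Consequently all of them have multiplicity exactly $1$ in $D$ and meet $D'$ transversally there, and any further singular point $q$ of $D$ is excluded because... it lies off $D'$ in general. Instead, we will bound singularities via the arithmetic genus: $p_a(D)=d+1$, and the node at $0$ contributes $\delta=1$, so the normalisation $C$ has genus $g\le d$. Then the $2d$ smooth points of $A[2]$ on $D$, together with the two branches at $0$, give $2d+2$ fixed points of the lift of $(-1)_A$ on $C$. The lift exists because the normalisation is functorial.

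\emph{Step 4: hyperellipticity.} Riemann--Hurwitz for $C\to C/\iota$ gives $2g-2\ge 2(2g(C/\iota)-2)+2d+2$. Combined with $g\le d$, this forces $g(C/\iota)=0$ and $g=d$. So $C$ is hyperelliptic of genus $d$, with $(-1)_A$ lifting to the hyperelliptic involution, and there are no other singularities (equality in Lemma~\ref{lem:sing_count}). The last sentence of the statement follows from Step 1: any section of $H^0_+(L)$ vanishing at $0$ is a multiple of $s$.

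I expect the main obstacle to be Step 1, namely pinning down that (G2) (or a genericity argument) yields a one-dimensional kernel of evaluation at $0$. If needed, I would verify this on a specific degenerate example, such as a product of elliptic curves, by explicit theta functions with characteristics.
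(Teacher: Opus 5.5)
Your Steps 2--4 are essentially sound and follow the paper's Step 2. Using $D\cdot D'=2d$, with $D'$ the zero locus of a member of $H^0_+(L)$ not passing through $0$, shows that the $2d$ points of $A_2^-(D)$ are smooth on $D$. This is a valid substitute for the paper's remark that odd multiplicity forces a $(-1)_A$-invariant branch. You should still justify the two fixed points over the node: $(-1)_A$ acts as $-\mathrm{id}$ on $T_0A$, so it preserves both tangent lines and hence both branches.

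The genuine gap is Step 1. You never prove that evaluation at $0$ is non-zero on $H^0_+(L)$; you re-read (G2) so that it contains this, which assumes what has to be shown. The paper derives it from (G1) alone, by the same kind of count you use in Step 3. Suppose every element of $H^0_+(L)$ vanished at $0$, and take a basis $\theta_1,\theta_2$. The curves $D_i=Z(\theta_i)$ are distinct and irreducible. They are even, so $\mult_0D_i\ge 2$, and both pass through the $2d$ points of $q_L^{-1}(-1)$. Hence
\[
2d=D_1\cdot D_2\ \ge\ 2d+\mult_0D_1\cdot\mult_0D_2\ \ge\ 2d+4,
\]
a contradiction. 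Only after this is ``the section'' in (G2) well defined. Your Step 3 also depends on it, since it uses a section with $\theta(0)\neq0$.

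Second, you omit what the paper treats as part of this proof (its Step 3): that (G2) actually holds on some surface for every $c\in\cC_d$. Without this the ``very general'' locus could be empty, and the lemma, and with it Theorem~\ref{thm:GenusgCurves}, would be vacuous.

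Your fallback of checking a product of elliptic curves cannot supply this. With a product polarisation, every section of $L$ has the form $\vartheta_1(z_1)\phi(z_2)$, so every curve in $|L|$ is reducible and (G1) fails. Moreover, the section of $H^0_+(L)$ vanishing at $0$ either has quadratic term a multiple of $z_2^2$, hence degenerate (when $\vartheta_1(0)\neq0$), or is not unique (when $\vartheta_1(0)=0$).

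The paper's route has three steps. It takes an irreducible one-nodal hyperelliptic curve on a surface with $\NS(A)=\ZZ H$, which exists by \cite[Theorem~1.6(i)]{KLM}. It translates this curve to be symmetric with its node at $0$ and identifies its bundle as $t^*_{\overline{c_0}}L_0$ for some $c_0\in\cC_d$; by Step~1, (G2) then holds for $c_0$. Finally, it transports this to every $c\in\cC_d$ using the transitivity in Lemma~\ref{lem:orbits} and a change of symplectic basis.
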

 
\begin{proof}
\emph{Step 1: Proof that $D:=Z(s)$ is an irreducible curve with an ordinary node at $0$}.\\
Let $(A,H)$ satisfy
(G1), so that every curve in $|L|$ is reduced and irreducible. By
Lemma~\ref{lem:parity}, for every $0\neq\sigma\in H^0_+(L)$ the divisor
$Z(\sigma)$ is symmetric and even, so by Formula \eqref{eq:Aminus} the set
$A_2^-(Z(\sigma))=q_L^{-1}(-1)$ has $2d$ elements, because $c\in\cC_d$, and
it does not contain $0$, see also Proposition \ref{prop:twotorsionofdivisors}.
 
Suppose that every element of $H^0_+(L)$ vanishes at $0$. Take a basis $\theta_1,\theta_2\in H^0_+(L)$ and put $D_i:=Z(\theta_i)$. These
are distinct irreducible curves, so they have no common component. Both have
even, hence at least $2$, multiplicity at $0$, and both pass through the $2d$
points of $q_L^{-1}(-1)$. Therefore
\[
2d=L^2=D_1\cdot D_2\;\geq\;2d+\mult_0D_1\cdot\mult_0D_2\;\geq\;2d+4,
\]
a contradiction. Thus evaluation at $0$ is a non-zero linear form on the
$2$-dimensional space $H^0_+(L)$. Hence $(\theta_1(0),\theta_2(0))\neq(0,0)$, by independence
$s\neq0$, and the sections vanishing at $0$ are spanned by $s$. In
particular, the section in (G2)
is $s$ up to a constant. Since $s(0)=0$ and $ds(0)=0$, the Taylor
expansion of $s$ at $0$ starts with this quadratic term. Hence $D$ has an ordinary node at
$0$. The divisor $D=Z(s)$ is irreducible, symmetric and
even, satisfies $\mult_0D=2$, and passes through the $2d$ points of
$A_2^-(D)$.
 
\emph{Step 2: Proof that $C$ is hyperelliptic.} \\ 
Let $\nu\colon C\to D$ be the normalisation. The curve $C$ is smooth,
projective and connected because $D$ is irreducible. Since $D$ is symmetric,
$(-1)_A$ lifts to an involution $j$ of $C$. This involution is non-trivial
because $(-1)_A$ has only finitely many fixed points. Points of $C$ over
$x\in D$ correspond to branches of $D$ at $x$. A point over $x$ is fixed by
$j$ if and only if $x\in A[2]$ and the corresponding branch is
$(-1)_A$-invariant.
\begin{itemize}
\item The two branches of the node at $0$ have distinct tangent lines. Since
  $(-1)_A$ acts on $T_0A$ as $-\mathrm{id}$, it preserves every line through
  the origin, hence each of the two branches. This gives two fixed points of
  $j$ over $0$.
\item Let $x\in A_2^-(D)$. Branches at $x$ that are not invariant are
  exchanged in pairs of equal multiplicity, while the multiplicities of all
  branches at $x$ add up to the odd number $\mult_xD$. Hence some branch at
  $x$ is invariant, which gives at least one fixed point of $j$ over $x$.
\end{itemize}
Since $\#A_2^-(D)=2d$ and $0\notin A_2^-(D)$, the involution $j$ has at least
$2d+2$ fixed points.
 
On the other hand, $p_a(D)=\frac12L^2+1=d+1$ and
\[
g(C)=p_a(D)-\sum_{x\in\operatorname{Sing}D}\delta_x,
\]
where $\delta_x=\dim_{\bC}(\nu_*\cO_C/\cO_D)_x$ is the $\delta$-invariant. It
is at least $1$ at every singular point and equal to $1$ at a node. Hence
$g(C)\leq d$, with equality if and only if the node at $0$ is the only
singularity of $D$. The Riemann--Hurwitz formula for $C\to C/\langle j\rangle$
gives
\[
2d-2\;\geq\;2g(C)-2\;=\;2\bigl(2g(C/\langle j\rangle)-2\bigr)+\#\operatorname{Fix}(j)\;\geq\;-4+2d+2\;=\;2d-2 .
\]
Hence all inequalities are equalities. We get $g(C)=d$,
$C/\langle j\rangle\simeq\bP^1$ and $\#\operatorname{Fix}(j)=2d+2$, and the
node at $0$ is the only singularity of $D$. As $d\geq2$, the curve $C$ is
hyperelliptic and $j$ is its hyperelliptic involution. Its Weierstrass points
are the two points over $0$ and the points over $A_2^-(D)$.
 
\emph{Step 3: The property (G2) holds at some point.}\\ It
suffices to find, for each $c\in\cC_d$, one surface satisfying (G1) on which
the section of $H^0_+(L_c)$ vanishing at $0$ has an ordinary node at $0$, that
is, has non-degenerate quadratic term at $0$.
 
For $\tau\in\mathfrak h_2$ let $\Lambda_\tau=\tau\ZZ^2+\diag(1,d)\ZZ^2$ and
$A_\tau=\bC^2/\Lambda_\tau$. Let $B_\tau=(e_1,e_2,f_1,f_2)$ be the
decomposition of $\Lambda_\tau$ given by the columns of the period matrix
$(\tau,\diag(1,d))$, that is $e_i=\tau\varepsilon_i$, $f_1=\varepsilon_1$,
$f_2=d\varepsilon_2$, where $\varepsilon_1,\varepsilon_2$ is the standard basis
of $\bC^2$. For $c\in\cC_d$ let $L_{c,\tau}:=t^*_{\overline c}L_0$ on
$A_\tau$, with characteristics taken with respect to $B_\tau$.
 
Apply \cite[Theorem 1.6(i)]{KLM} with $\delta=1$ and $k=2$. Then the
geometric genus is $d$ and $\alpha=\lfloor(d-1)/4\rfloor$, and the numerical
condition there reads $1\geq\alpha(d-3-2\alpha)$, which holds for
$2\leq d\leq6$. Hence a general $(1,d)$ polarised abelian surface carries 
an irreducible hyperelliptic curve with exactly one
node.
Choose a surface $A_{\tau_A}$ with $\NS(A_{\tau_A})=\ZZ H$, so that it
satisfies (G1), and let $C_A$ be such a curve.
 
By Lemma~\ref{lem:unique_curve}(i) and its proof, a translate $D_A$ of $C_A$ is
symmetric with its node at $0$, and $(-1)_A$ lifts through the normalisation
to the hyperelliptic involution. The node is the only singular point of $D_A$,
since $p_a(D_A)-d=1$. Hence every $x\in A_2^-(D_A)$ is a smooth point of $D_A$
fixed by $(-1)_A$, so its preimage is a Weierstrass point. Conversely, the $2d$
Weierstrass points not lying over $0$ map to distinct smooth points of $D_A$ in
$A[2]$, which lie in $A_2^-(D_A)$. Thus $\#A_2^-(D_A)=2d$. Since $D_A$ is even,
Proposition~\ref{prop:twotorsionofdivisors} gives $\cO_A(D_A)=L_{c_0,\tau_A}$
for some $c_0\in\cC_d$. By Lemma~\ref{lem:parity}, the theta function defining
$D_A$ lies in $H^0_+(L_{c_0,\tau_A})$, and it vanishes at $0$. So by Step~1 it
is a multiple of the section $s$ of $L_{c_0,\tau_A}$. As $D_A$ has an ordinary
node at $0$, (G2) holds for $c_0$ on $A_{\tau_A}$.
 
Now let $c\in\cC_d$ be arbitrary. By Lemma~\ref{lem:orbits}(b) there is an
element $\gamma$ of the group generated by $T_1,J_1,T_2,J_2,M_1,M_2$ with
$\bar q_{c_0}\circ\bar\gamma=\bar q_c$. By Lemmas~\ref{lem:qformula}
and~\ref{lem:orbits}(a), $\bar q_{L_{c_0,\tau_A},\gamma B_{\tau_A}}=\bar q_c$.
The basis $\gamma B_{\tau_A}$ is again a symplectic basis of type $(1,d)$.
Hence, there are $\tau'\in\mathfrak h_2$ and an isomorphism
$\varphi\colon A_{\tau_A}\to A_{\tau'}$ of polarised abelian surfaces taking
$\gamma B_{\tau_A}$ to $B_{\tau'}$ \cite[Ch.~8]{BirkenhakeLange1992complex}.
Since $\varphi$ is an isomorphism of polarised abelian surfaces commuting with
$(-1)$, the bundle $\varphi_*M$ is symmetric whenever $M$ is, and
$\varphi(D)\in|\varphi_*M|$ for every $D\in|M|$, with
$\mult_{\varphi(x)}\varphi(D)=\mult_xD$. Hence
$q_{\varphi_*M}(\varphi(x))=q_M(x)$ for $x\in A_{\tau_A}[2]$. As $\varphi$
maps the point with $\gamma B_{\tau_A}$-coordinates $(u,v)$ to the point with
$B_{\tau'}$-coordinates $(u,v)$, we get
\[
\bar q_{\varphi_*L_{c_0,\tau_A},B_{\tau'}}=\bar q_{L_{c_0,\tau_A},\gamma B_{\tau_A}}=\bar q_c ,
\]
and Lemma~\ref{lem:qformula} gives $\varphi_*L_{c_0,\tau_A}=L_{c,\tau'}$. The
curve $\varphi(D_A)\in|L_{c,\tau'}|$ is symmetric and even with an ordinary
node at $0$, and $A_{\tau'}\simeq A_{\tau_A}$ satisfies (G1). By
Lemma~\ref{lem:parity} and Step~1, $\varphi(D_A)$ is the zero locus of the
section of $H^0_+(L_{c,\tau'})$ vanishing at $0$. Hence (G2) holds for $c$ on
$A_{\tau'}$.
 
To conclude the proof of the lemma, let $(A,H)$ be very general and $c\in\cC_d$. By (G1)
and Step~1, $s\neq0$ and the sections of $H^0_+(L)$ vanishing at $0$ form the
line spanned by $s$. By (G2) the quadratic term of $s$ at $0$ is
non-degenerate, so Step~2 applies and proves all claims except uniqueness.
For uniqueness, a section of $H^0_+(L)$ whose zero locus is singular at $0$
vanishes at $0$, so it is a multiple of $s$ by Step~1.
\end{proof}

We are ready to count all genus $d$ hyperelliptic curves on $(1,d)$ polarised surfaces as the following theorem shows.
\begin{thm}
\label{thm:GenusgCurves}
    Let $(A,L)$ be a very general $(1,d)$ polarised abelian surface. Then 
    \begin{enumerate}
        \item For $d=2$ there are precisely 12 hyperelliptic genus 2 curves in $|L|$.  
        \item For $d=3$ there are precisely 90 hyperelliptic genus 3 curves in $|L|$.  
        \item For $d=4$ there are precisely 192 hyperelliptic genus 4 curves in $|L|$.  
        \item For $d=5$ there are precisely 150 hyperelliptic genus 5 curves in $|L|$. 
        \item For $d=6$ there are precisely 36 hyperelliptic genus 6 curves in $|L|$.
        \item For $d\geq 7$ there are no hyperelliptic genus $d$ curves in $|L|$.
    \end{enumerate}
\end{thm}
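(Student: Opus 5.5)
We count hyperelliptic genus $d$ curves in $|L|$ (hence in the whole algebraic class, up to translation) by reducing to symmetric curves with a node at $0$ and then counting translates that land in the fixed linear system $|L|$. Part (6) is exactly Corollary~\ref{cor:nocurvesofhighgenus}, so assume $2\le d\le 6$.

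\emph{Step 1: normalised curves.} By Lemma~\ref{lem:unique_curve}(i), every hyperelliptic genus $d$ curve $f\colon C\to A$ with $\cO_A(f(C))$ of type $(1,d)$ has a unique translate $D=t_z(f(C))$ that is symmetric with its node at $0$. Such a $D$ is even and, by the equality case of Lemma~\ref{lem:sing_count}, satisfies $\#A_2^-(D)=2d$. Proposition~\ref{prop:twotorsionofdivisors} then gives $\cO_A(D)=t^*_{\overline c}L_0$ with $c\in\cC_d$.

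Conversely, Lemma~\ref{lem:existence} shows that for each $c\in\cC_d$ there is exactly one such curve in $|t^*_{\overline c}L_0|$: it is $Z(s)$, and uniqueness also follows from Lemma~\ref{lem:unique_curve}(ii). Hence the set of normalised curves is in bijection with $\cC_d$ and has $\#\cC_d=3,10,12,6,1$ elements for $d=2,\dots,6$.

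\emph{Step 2: translates into $|L|$.} Every hyperelliptic genus $d$ curve in $|L|$ is $t_{-z}D$ for one normalised $D$ and one $z$, both uniquely determined by Lemma~\ref{lem:unique_curve}(i). Conversely, for each normalised $D$, the condition $t_{-z}^*\cO_A(D)\cong L$ is $\phi_L(z)=$ a fixed element of $\Pic^0(A)$. Since $\phi_L$ has kernel $K(L)$ of order $d^2$, this condition has exactly $d^2$ solutions $z$. These translates are pairwise distinct, because a different $z$ would give a second normalising translation, contradicting uniqueness.

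\emph{Step 3: the count.} The number of curves in $|L|$ is therefore $d^2\cdot\#\cC_d$, which gives $4\cdot3=12$, $9\cdot10=90$, $16\cdot12=192$, $25\cdot6=150$ and $36\cdot1=36$. Genus exactly $d$ of each translate, and the absence of other singularities, are inherited from Lemma~\ref{lem:existence}.

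\emph{Main obstacle.} The delicate point is the injectivity in Step 2: distinct pairs $(D,z)$ must give distinct curves in $|L|$. For a fixed $D$ and two translations $z\neq z'$, equal images would make $D$ invariant under $t_{z-z'}$, and then the node at $0$ would be moved to another node at $z-z'$. This contradicts $p_a(D)-d=1$. Different normalised $D$ lie in different linear systems and cannot be translates of each other with the node kept at $0$, again by uniqueness in Lemma~\ref{lem:unique_curve}(i). The very generality assumptions (G1) and (G2) are what make Lemma~\ref{lem:existence} available for every $c\in\cC_d$ simultaneously.
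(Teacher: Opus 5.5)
Your proposal is correct and follows the paper's proof essentially step for step. You normalise each curve to the unique symmetric translate with its node at $0$ (Lemma~\ref{lem:unique_curve}), identify these curves bijectively with $\cC_d$ via Proposition~\ref{prop:twotorsionofdivisors} and Lemma~\ref{lem:existence}, and multiply by the $d^2$ translates lying in $|L|$. Your $\phi_L$/$K(L)$ count of those translates, and the distinctness argument via uniqueness of the normalising translation, only make explicit what the paper states briefly. The sign in $t^*_{-z}$ versus $t^*_{z}$ is immaterial to the count.
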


\begin{proof}
    Let $f:C \to A$ be a local embedding of a hyperelliptic curve of genus $d$ in $A$ such that $f(C)$ is symmetric and $f(C)$ has a node at $0$. Then $f(C)$ passes through $2d$ points of $A[2]$ with multiplicity 1 by Lemma \ref{lem:odd_mult_points}, so $\mathcal{O}(f(C))$ is a symmetric line bundle with an even divisor $f(C)$ in its linear system such that $\#A_2^-(f(C)) = 2d$. Therefore $\cO_A(f(C))=t^*_{\overline c}L_0$ with $c\in\cC_d$, and by Lemma \ref{lem:parity} we have $f(C) = Z(s)$ for some $s \in H^0_+(t_{\overline{c}}^*L_0)$ vanishing at $0$. By Lemma \ref{lem:existence}, $s$ is unique up to a constant, and conversely every $c\in\cC_d$ gives such a curve. Hence the symmetric hyperelliptic curves of geometric genus $d$ with a node at $0$ in the algebraic class of $L$ correspond bijectively to elements of $\cC_d$. By Lemma \ref{lem:unique_curve}, every hyperelliptic curve of geometric genus $d$ in $|L|$ is a translate of exactly one of them. Each of them has exactly $d^2$ translates in $|L|$, and these are distinct because the curve has a unique node and so is not invariant under any non-zero translation. Therefore $|L|$ contains exactly $d^2\cdot\#\cC_d$ hyperelliptic curves of geometric genus $d$, which gives $12$, $90$, $192$, $150$ and $36$ for $d=2,\dots,6$.
    For $d\geq 7$ the result follows from Corollary \ref{cor:nocurvesofhighgenus}.
\end{proof}

\begin{rem}
    The numbers in Theorem \ref{thm:GenusgCurves} are precisely the ones found in \cite[p.10, Table 1]{BryanOberdieckPandharipandeYin2015Curve}. This shows in particular that the transversality condition holds in these cases, see \cite[Lemma 12]{BryanOberdieckPandharipandeYin2015Curve}. To compute the number of hyperelliptic curves up to translation on a very general abelian surface, one needs to divide the numbers in Theorem \ref{thm:GenusgCurves} by $d^2$. It is because curves have precisely one node, so they are not invariant under any translation from $K(L)$. In particular, for $d=g=6$ we give an alternative proof of the fact from \cite{GrossiMoretti2025explicit} that there is a unique genus 6 hyperelliptic curve on a general $(1,6)$ polarised abelian surface.
\end{rem}

\section{$(1,d)$ polarisation with an odd $d$}\label{SectOdd}
When $d$ is odd then for every symmetric line bundle $L$ there is a unique $c\in\frac12\Lambda$ such that $L$ is of characteristic $c$. In particular, for a symmetric line bundle all of its 16 translates by $2$-torsion points give rise to all 16 symmetric line bundles.   

Let $A$ be a $(1,d)$ polarised surface with an odd $d$ satisfying $5\leq d\leq 15$ and let $C$ be a hyperelliptic curve of genus $g=\frac{d+5}2$ with a symmetric local embedding $f:C\to A$ such that $\cO(f(C))$ is of characteristic 0 (for a fixed decomposition). Note that by \cite{KLM} such curves exist on abelian surfaces. When we write $d=2k+1$, then $g=k+3$.

We start with the following lemma. 

\begin{lem}
\label{lem:odd_nodes}
    Let $(A, L)$ be a $(1,d)$ polarised abelian surface with $d = 2k+1$, $2 \leq k \leq 7$, and let $D$ be a symmetric reduced odd divisor of geometric genus $g = k+3$ with $L=\cO(D)$. Assume that $L$ is of characteristic $0$. Set $$Z = \{x \in A_2^+(D) \mid \operatorname{mult}_xD > 0\}.$$ Then $\#Z \leq k-1$. Moreover, if $\# Z = k-1$, then the points of $Z$ are precisely the singularities of $D$, all of them are nodes, and the normalisation of $D$ is a hyperelliptic curve of genus $k+3$. 
\end{lem}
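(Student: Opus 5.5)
We first pin down the $2$-torsion data. Since $L$ has characteristic $0$ and $0\notin S$, $L$ is even. By Proposition~\ref{prop:twotorsionofdivisors}, a symmetric odd divisor in $|L|$ therefore has $\#A_2^-(D)=16-6=10$, and $\#A_2^+(D)=6$, with $0\in A_2^-(D)$. Every point of $Z$ lies in $A_2^+(D)$ and has positive, hence even, multiplicity, so it has multiplicity at least $2$. In particular every point of $Z$ is a singular point of $D$. The arithmetic genus is $p_a(D)=d+1=2k+2$, and the geometric genus is $k+3$. Hence
\[
\sum_{x\in\Sing D}\delta_x=(2k+2)-(k+3)=k-1 .
\]
Each point of $Z$ is singular with $\delta_x\ge 1$, so $\#Z\le k-1$. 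This proves the first claim. I would want $D$ irreducible here, so that $g$ is the genus of a connected normalisation; otherwise I would work componentwise with the geometric genus read as $\sum(g_i-1)+1$.

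Now assume $\#Z=k-1$. Then the $\delta$-invariants sum exactly over $Z$ with $\delta_x=1$ for each $x\in Z$. So $Z=\Sing D$, and each singularity is a node or an ordinary cusp. A cusp has multiplicity $2$ and also $\delta=1$, so I must rule it out. A node at a $2$-torsion point has two branches with tangent lines preserved by $(-1)_A$, since $-1$ acts on $T_xA$ as $-\mathrm{id}$. A cusp has a single branch, which is also preserved.

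For hyperellipticity I argue as in Step~2 of Lemma~\ref{lem:existence}. Let $\nu\colon C\to D$ be the normalisation, so $g(C)=k+3$. The symmetry $(-1)_A$ lifts to a nontrivial involution $j$ of $C$. Each of the $10$ points of $A_2^-(D)$ is a smooth point of $D$, since $\Sing D=Z\subset A_2^+(D)$, and lies on $D$. So each gives a fixed point of $j$. Each node gives $2$ fixed points and each cusp gives $1$. Hence
\[
\#\operatorname{Fix}(j)\ge 10+\#\{\text{nodes}\}+(k-1).
\]
Riemann--Hurwitz gives $2(k+3)-2=2(2h-2)+\#\operatorname{Fix}(j)$, where $h$ is the genus of $C/\langle j\rangle$. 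This yields
\[
2k+4\ge -4+10+(k-1)+\#\{\text{nodes}\}.
\]
So $\#\{\text{nodes}\}\le k-1$, which is not yet decisive.

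The parity of $\#\operatorname{Fix}(j)$ finishes the argument. We have $\#\operatorname{Fix}(j)=2k+8-4h$, which is even, and it is at least $10+(k-1)+n$, where $n$ is the number of nodes. If $h\ge1$, then $\#\operatorname{Fix}(j)\le 2k+4$, so $9+k+n\le 2k+4$. Using the more precise count $\#\operatorname{Fix}=10+2n+c$, with $c$ the number of cusps and $n+c=k-1$, this reads $10+k-1+n\le 2k+4$, i.e. $n\le k-5$. This is possible for large $k$, so a bound alone does not suffice; this is the main obstacle.

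To overcome it, I would use Lemma~\ref{lem:sing_count} in reverse together with a Brill--Noether or dimension input. Alternatively, I would rule out cusps through the local equation: the quadratic term of an even theta function at a $2$-torsion point is a square exactly along a codimension-one condition, which would be excluded by a genericity assumption if available. Failing that, I would show directly that $h=0$. The fixed point count is $10+2n+c=2k+8-4h$. With $n+c=k-1$ this gives $n=k-1-4h$ and $c=4h$, and the case $h=0$ forces $c=0$, i.e. all singularities are nodes. So it suffices to show $h=0$. I would try to prove this by bounding $h$ via the map $C/\langle j\rangle\to \mathrm{Km}(A)$: its image is a curve through $10+(k-1)$ nodes of the Kummer surface, and an adjunction or genus estimate on $\mathrm{Km}(A)$ forces rationality. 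Given $h=0$, every singularity is a node, each in $Z$, and the normalisation is hyperelliptic of genus $k+3$.
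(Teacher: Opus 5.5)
\textbf{There is a genuine gap in the equality case: you never exclude cusps.}

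Your first part is correct, and it is simpler than the paper's. Every point of $Z$ has even positive multiplicity, so it is singular, and hence $\#Z\le\sum_x\delta_x=p_a(D)-g=k-1$ directly. The paper instead derives this bound from Riemann--Hurwitz. Your bookkeeping in the equality case is also right: $\Sing D=Z$ and $\delta_x=1$ at each point of $Z$. With $n$ nodes and $c$ cusps, the exact fixed-point count $10+2n+c=2k+8-4h$ correctly gives $c=4h$.

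Ruling out cusps is equivalent to showing $h=0$, and the routes you suggest do not close this. A genericity hypothesis is not available, since the lemma is stated for an arbitrary $(1,d)$ polarised surface. The Brill--Noether and Kummer/adjunction ideas are only sketched and not carried out.

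The missing idea is a local parity fact: a branch of $D$ at a point $x\in A[2]$ that is mapped to itself by $(-1)_A$ has odd multiplicity.
\begin{itemize}
\item The lifted involution $\sigma$ acts non-trivially on such a branch. Otherwise the parametrisation $\gamma$ would satisfy $\gamma(t)=-\gamma(t)$ in local coordinates centred at $x$, where $(-1)_A$ is $w\mapsto -w$; this would force $\gamma\equiv0$.
\item Hence one can choose a uniformising parameter $t$ with $\sigma(t)=-t$.
\item Writing $\gamma(t)=(u(t),v(t))$, we get $u(-t)=-u(t)$ and $v(-t)=-v(t)$. So both have odd order in $t$, and the branch multiplicity $\min(\mathrm{ord}\,u,\mathrm{ord}\,v)$ is odd.
\end{itemize}
A cusp at $x$ has a single branch, which is necessarily invariant, of multiplicity $2$. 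So no cusp can occur at a $2$-torsion point. Thus $c=0$, your identity $c=4h$ gives $h=0$, and $j$ is the hyperelliptic involution. This is exactly the step the paper uses, in the form $b_x+2\delta_x\ge4$ for $x\in Z$.
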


\begin{proof}

Let $\nu: C \to D$ be the normalisation of $D$. Then $$g(C) = p_a(D) - \Delta = d+1 - \sum_{x \in \operatorname{Sing}(D)}\delta_x.$$ We denote the lift of $(-1)_{A}$ to $C$ by $j$. Let $R$ be the ramification of $\pi: C \to C/\langle j \rangle$. Note that $\pi$ is $2:1$ so $R$ is equal to the number of fixed points of $j$. 
The Riemann-Hurwitz formula applied yields $$R = 2d - 2\Delta - 4g(C/\langle j \rangle)+4.$$
Let us estimate $R$. The map $\pi$ is branched only over the points of $\nu^{-1}(A[2] \cap D)$. By Proposition \ref{prop:twotorsionofdivisors} we know that $\# A_2^-(D) = 10$, so $$R \geq 10 + \sum_{x \in A_2^+(D)}b_x,$$ where $b_x$ is the number of $j$-invariant branches. 
Therefore, we get $$10 + \sum_{x \in A_2^+(D)}b_x \leq R = 2d - 2\Delta - 4g(C/\langle j \rangle)+4,$$ which is equivalent to $$\sum_{x \in A_2^+(D)}b_x+2\Delta +4g(C/\langle j \rangle) \leq 2d-6 =4k-4.$$ The last inequality can be written as $$\sum_{x \in Z}(b_x + 2\delta_x)+\sum_{y \in \operatorname{Sing}(D)\setminus Z}2\delta_y +4g(C/\langle j \rangle) \leq 4k-4.$$ 

We claim that $b_x+2\delta_x\geq4$
for every $x\in Z$. If $\delta_x\geq2$, this is immediate. Suppose therefore that $\delta_x=1$. Then $x$ is either a node or a cusp. The latter cannot occur since any branch through a $2$ torsion point that is invariant under $(-1)_A$ must have odd multiplicity, while a cusp has multiplicity two. Thus, $\delta_x=1$ implies that $x$ is a node, hence $b_x = 2$, since two branches at a node are preserved by $(-1)_A$, and the claim is established. Therefore, $\#Z \leq k-1$.

If $\#Z=k-1$, then we have $$k-1=p_a(D)-g(C)=\Delta\geq\sum_{x \in Z}\delta_x\geq \#Z=k-1$$ so  $\Sing(D)\setminus Z = \varnothing$ and for all $x\in Z$ we have $\delta_x=1$, hence the previous inequality forces $b_x=2$ and $g(C/\langle j \rangle) = 0$, so $j$ is the hyperelliptic involution, $D$ does not have singularities outside of $Z$ and all points in $Z$ are ordinary nodes.

\end{proof}

Before stating the main result let us state the first existence result.
\begin{lem}\label{lem:onecurve} For $1\leq k\leq7$   there exists a symmetric local embedding $f$ of a genus $k+3$ hyperelliptic curve $C$ into a general $(1,2k+1)$ polarised abelian surface such that $f(C)$ is an odd section of a line bundle of characteristic 0 with $k-1$ nodes in $2$-torsion points of $A$. 
\end{lem}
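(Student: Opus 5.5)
Following the pattern of Lemma~\ref{lem:existence}, Step~3, the plan is to start from an abstract existence result for hyperelliptic curves with the right geometric genus, move the curve into symmetric position, and then adjust the decomposition so that the line bundle has characteristic $0$.

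\emph{Existence.} We first apply \cite[Theorem 1.6]{KLM}. For $d=2k+1$ and geometric genus $g=k+3$, we have $\delta=p_a-g=(d+1)-(k+3)=k-1$ and take $k=2$ in their notation. We check the numerical condition there for $1\leq k\leq 7$; this is a finite check. The result gives, on a general $(1,d)$ polarised surface, a reduced irreducible curve in $|L|$ whose normalisation $C$ is hyperelliptic of genus $k+3$. Let $f\colon C\to A$ be the induced local embedding.

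\emph{Symmetric position.} Since $d$ is odd, Lemma~\ref{lemsymodd} and the discussion around diagram \eqref{UniJac} let us translate $f$ so that a Weierstrass point maps to $0$. Then $D=f(C)$ is symmetric, $(-1)_A$ lifts to the hyperelliptic involution, and all Weierstrass points map to $A[2]$. We now apply Lemma~\ref{lem:sing_count}. It gives
\[
2k+8\leq \#A_2^-(D)+2(k-1),
\]
so $\#A_2^-(D)\geq 10$. By Proposition~\ref{prop:twotorsionofdivisors}(2), $\#A_2^-(D)\in\{6,10\}$, so equality holds everywhere. Consequently all singularities are $k-1$ ordinary nodes at $2$-torsion points, each node has both branches Weierstrass, and $\#A_2^-(D)=10$. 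The node points lie in $A_2^+(D)$, since they have multiplicity $2$. There are exactly $10$ points of odd multiplicity, and by Proposition~\ref{prop:twotorsionofdivisors} this means: either $D$ is even and $\cO(D)$ has characteristic in $S$, or $D$ is odd and $\cO(D)$ has characteristic outside $S$.

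\emph{Normalising characteristic and parity.} We want $D$ odd with characteristic $0$. Translating by a $2$-torsion point $y$ keeps $D$ symmetric. Choosing $y\in A_2^-(D)$ makes $0$ a point of multiplicity one, so $t_{-y}D$ is odd. Then $\cO(t_{-y}D)$ is a symmetric bundle whose odd divisors have $10$ odd-multiplicity points, so its characteristic $c$ lies outside $S$. These characteristics form the set $\cC_3$-type family $\{a_1b_1=a_2b_2\}$. The orbit analysis in the proof of Lemma~\ref{lem:orbits}(b) for odd $d$ shows that this set is a single orbit of $\Gamma$ on the level of the $\bar q$'s, exactly as for $d=3$. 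Note that the action of $\bar M_1,\bar M_2$ depends on $d$ only through $d \bmod 2$, so the $d=3$ argument applies to all odd $d$. Hence there is $\gamma\in\Gamma$ with $\bar q_{M,\gamma B}=\bar q_0$. Replacing the decomposition $B$ by $\gamma B$, via the isomorphism $\varphi$ as in Step~3 of Lemma~\ref{lem:existence}, makes $\cO(D)$ of characteristic $0$ while keeping $D$ odd with $k-1$ nodes at $2$-torsion points.

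\emph{Main obstacle.} The delicate point is the first step: checking that the hypotheses of \cite[Theorem 1.6]{KLM} really hold for all $1\leq k\leq 7$. We must also make sure that the statement gives hyperelliptic curves with exactly the required $\delta$, i.e.\ genuinely genus $k+3$ rather than only nodal curves of that genus. If that theorem only covers part of the range, the fallback is to cite the nonemptiness of the relevant Severi-type hyperelliptic locus from \cite{KLM} directly. The case $k=1$ needs separate care, since then $g=4=d$: it is covered instead by Theorem~\ref{thm:GenusgCurves} applied to a smooth hyperelliptic curve with $k-1=0$ nodes.
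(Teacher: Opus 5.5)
Your proof is correct and follows the same skeleton as the paper's. You get existence from \cite{KLM}, translate so a Weierstrass point sits at $0$, and then combine Lemma~\ref{lem:sing_count} with Proposition~\ref{prop:twotorsionofdivisors}(2). This forces $\#A_2^-(D)=10$ and equality in the lemma, hence exactly $k-1$ nodes at $2$-torsion points; you make this step more explicit than the paper does.

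The only real divergence is how you reach characteristic $0$. The paper uses that for odd $d$ one has $K(L)[2]=0$, so the sixteen translates $t_x^*\cO(D)$, $x\in A[2]$, are all sixteen symmetric bundles. Translating $D$ by a suitable $2$-torsion point therefore gives $\cO(D)=L_0$ for the \emph{fixed} decomposition. Oddness is then automatic: $0\notin S$ makes $L_0$ even, and an even divisor in $|L_0|$ has only $6$ points of odd multiplicity.

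You instead translate to make $D$ odd and then change the decomposition using the orbit computation of Lemma~\ref{lem:orbits}(b). This is valid, since the reductions mod $2$ of $T_i,J_i,M_i$ depend only on $d \bmod 2$, so the $d=3$ analysis carries over to every odd $d$. However, it gives characteristic $0$ only with respect to $\gamma B$. That is enough for the later use in Lemma~\ref{moduliopennes}, which allows ``some decomposition'', but it is a detour with no advantage over the one-line translation.

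Two smaller points.

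First, the numerical check you postpone does hold. Take the condition of \cite{KLM} in the form used in Step~3 of Lemma~\ref{lem:existence}, now with $g=k+3$ and $\delta=k-1$. It reads $k-1\geq\alpha(k-2\alpha)$ with $\alpha=\lfloor (k+2)/4\rfloor$. This holds for $1\leq k\leq 7$, with equality at $k=7$, and fails for $k=8$, consistent with Theorem~\ref{thmodd}.

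Second, your remark on $k=1$ is wrong. There $d=3$ and $g=4$, so $g\neq d$, and Theorem~\ref{thm:GenusgCurves} (one-nodal genus $3$ curves when $d=3$) does not produce these curves. No separate treatment is needed, though. Your main argument with $\delta=0$ already covers this case, which consists of the smooth genus $4$ curves of \cite{BorowkaSankaran2017hyperelliptic}.
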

\begin{proof}
    By \cite{KLM} there exists a hyperelliptic curve of genus $k+3$ locally embedded into a $(1,2k+1)$ polarised abelian surface with nodes only. We can translate it in such a way that the image of a chosen Weierstrass point is $0$, hence the embedding is symmetric and since $2k+1$ is odd, we can translate by a 2-torsion point to get that $\cL=\cO(f(C))$ is of characteristic 0. Put $D=f(C)$ and write $D=\{\theta_C=0\}$ for some theta function $\theta_C$.

    Now, we have inequality $\#A_2^-(D)\geq10$ in Lemma \ref{lem:sing_count}, so by Proposition \ref{prop:twotorsionofdivisors} we have equality $\#A_2^-(D)=10$ and the curve is an odd section.
    \end{proof}

Using the quadratic form from the proof of Proposition \ref{prop:twotorsionofdivisors}, see also \cite[Prop 4.7.2]{BirkenhakeLange1992complex}, we call an ample symmetric line bundle $L(H,\chi)$ odd if $\#q_L^{-1}(-1)=10$. In our setting, the odd line bundles are of characteristics that belong to $S$. Recall that being odd is intrinsic and does not depend on the choice of the decomposition of a lattice. The set of all odd line bundles will be denoted by $\tilde{S}$. As usual if a symmetric line bundle is not odd, it is called even and in particular $L_0$ is even.

Let us define the following covering of the moduli space of $(1,d)$ polarised abelian surfaces with $d$ odd and $2\leq k\leq 7$.
$$X=\{(A,H,P): (A,H)\in\cA_{2}^{(1,d)}, P\subset \tilde{S},\  \#P=k-1\} $$
By \cite{BM} and, in particular, the paragraph following Lemma 3.4 there, the restriction of the action of the modular group $\Sp_4(\ZZ)$ is the action of \(\Sp_4(\ZZ_2)\simeq S_6\) on the \((k-1)\)-element subsets \(P\subset \tilde{S}\) and is transitive. Hence \(X\) is connected.
Note that the forgetful map $X\to \cA_2^{(1,d)}$ is finite of degree $\binom{6}{k-1}$ and $X$ is irreducible.

Let $D$ be a reduced irreducible curve being an odd section of a characteristic $0$ line bundle $L_0$ with $k-1$ nodes at 2-torsion points and geometric genus $k+3$, see Lemma \ref{lem:odd_nodes}. Let $z\in Z$ be a node. Then $t_{-z}(D)$ has a node at 0 and passes through ten 2-torsion points with multiplicity 1. In particular, $\cO(t_{-z}(D))$ is an odd line bundle. Therefore, we define $$\Psi(D)=\{\cO(t_{-z}(D)),z\in Z\}\subset\tilde{S}$$
Analogously, for $P\subset\tilde{S}$ we define $Z_P=\{z\in A[2]: t_z^*L_0\in P\}$. 
One can check explicitly that $$\Psi(D)=P\Leftrightarrow \Sing(D)=Z_P.$$

Now, we need a technical lemma that can be seen as a counterpart of Lemma \ref{lem:existence}.
\begin{lem}\label{moduliopennes}
    
    Let $U$ be the set of triples $(A,H,P)$ such that there exists an irreducible (geometric) genus $(k+3)$ curve $D$ being an odd section of a characteristic $0$ (for some decomposition) line bundle with $\Psi(D)=P$. Then $U$ is open. 
\end{lem}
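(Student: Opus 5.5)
We will show that the conditions defining $U$ are open in $X$. The plan is to realise the curves in question as points of a locally closed family of theta divisors over $X$, and to use Lemma~\ref{lem:odd_nodes} to turn ``geometric genus $k+3$'' into the closed and open conditions ``has nodes at the $k-1$ points of $Z_P$''.

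First we fix a point $(A,H,P)\in U$ with its curve $D$. By Lemma~\ref{lem:odd_nodes}, since $\Psi(D)=P$ we have $\Sing(D)=Z_P$ with $\#Z_P=k-1$, all singularities are ordinary nodes, and the normalisation is hyperelliptic. Locally on $X$ near $(A,H,P)$, that is over a small neighbourhood of $\tau$ in $\mathfrak h_2$ with the decomposition fixed, the bundle $L_0$ varies holomorphically. The odd theta functions $H^0_-(L_0)$ form a holomorphic vector bundle of rank $k$ by Lemma~\ref{lem:dimensionofh0}; for $k\le 2$ we use the analogous formula from \cite[Exercise 4.12.11]{BirkenhakeLange1992complex}. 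The points of $Z_P$ are given by fixed coordinates $x(u,v)$, so they vary holomorphically as well.

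The condition that an odd section $\vartheta$ is singular at $z\in Z_P$ is the following. For $z$ a $2$-torsion point in $A_2^+(D)$, an odd section has even order of vanishing at $z$, so it is enough to impose $\vartheta(z)=0$; this is one linear condition, and the gradient then vanishes automatically by parity. Hence the $k-1$ conditions $\vartheta(z)=0$, $z\in Z_P$, cut out a linear subsystem $W_\tau\subset H^0_-(L_0)$ of dimension at least $1$. At our point $D$ lies in it, and arguing as in Step~1 of Lemma~\ref{lem:existence} we expect it to be exactly a line. Indeed, if there were two independent such sections $\theta_1,\theta_2$, the intersection $Z(\theta_1)\cdot Z(\theta_2)$ would be at least $10+4(k-1)>2d=4k+2$, because $10$ comes from $A_2^-$ and $\mult_z\ge 2$ at each $z\in Z_P$; this contradicts (G1)-type irreducibility. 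So the evaluation map at $Z_P$ has rank $k-1$ at $\tau$. Rank is lower semicontinuous, hence it stays $k-1$ nearby, and $W_\tau$ is a line bundle near $\tau$ spanned by a holomorphically varying section $s_\tau$.

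Finally we check that $Z(s_\tau)$ keeps the required properties for $\tau$ near our point. Having ordinary nodes at the points of $Z_P$ means that the Hessian at each $z$ is nondegenerate, which is an open condition. Being smooth away from $Z_P$ is also open: the singular locus varies upper semicontinuously in the proper family, and near each $z$ an ordinary node stays the only singularity. Irreducibility is open here as well, since a small deformation of an irreducible nodal curve with fixed nodes stays irreducible, the node count and the genus being constant. The resulting curve then has $p_a=d+1$ and exactly $k-1$ nodes, so its geometric genus is $k+3$, and $\Psi=P$. This gives an open neighbourhood contained in $U$.

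The main obstacle is making the dimension count for $W_\tau$ rigorous in small $k$ and checking that the parity argument indeed forces the gradient to vanish; this relies on the odd-multiplicity behaviour at $A_2^+$ points from Proposition~\ref{prop:twotorsionofdivisors}, and that is the step I would verify first.
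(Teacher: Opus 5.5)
Your proposal is correct and follows essentially the same route as the paper: a basis of the $k$ odd theta functions of $L_0$, the $k-1$ linear conditions $\vartheta(z)=0$ for $z\in Z_P$ (the gradient vanishing automatically by parity), uniqueness of the solution via the bound $10+4(k-1)>4k+2$, openness of the maximal-rank condition with holomorphic dependence on $\tau$, and openness of the nodal and irreducibility conditions, which you justify in more detail than the paper does. One point to tidy: the intersection argument needs no (G1)-type hypothesis, since you can take one of the two independent sections to be the one cutting out the irreducible curve $D$, and then the two zero loci share no component unless they coincide.
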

\begin{proof}
Let $\theta_1,\ldots,\theta_k\in H^0_-(L_0)$ be the basis of the space of Riemann classical odd theta functions for the line bundle $L_0$ of characteristic 0. The set $A_2^+(D)$ does not depend on a choice of any $D$ that is a symmetric odd divisor with $\cO(D)=L_0$. Now, choose $Z=Z_P=\{z_1,\ldots,z_{k-1}\}\subset A_2^+(D)$. Then, there exists a non-zero section $\theta_Z=\sum_{i=1}^k \alpha_i\theta_i$ that satisfies the set of $k-1$ linear equations $\sum^{k}_{i=1} \alpha_i\theta_i(z_j)=0,\  j=1,\ldots, k-1$. 
Hence, we define $D=(\theta_Z=0)$. The solution is unique up to a scalar and $D$ is well defined because if two (symmetric) curves $D,D'$ have nodes at the same points then $D\cdot D'\geq 10+4(k-1)=4k+6 > 4k+2=L\cdot L$ that is a contradiction. 
Now, classical theta functions are holomorphic on the Siegel space, hence (locally) a solution to a family of linear equations is holomorphic too. Note that the system of linear equations is of maximal rank equal to $k-1$, hence open. The irreducibility of the curve and having nodes at 2-torsion points are also open dense conditions, hence $U$ is open. 
\end{proof}

We have the following theorem that computes the number of hyperelliptic curves on $(1,d)$ polarised surfaces.
\begin{thm}\label{thmodd}
Let $(A, L)$ be a general $(1,d)$ polarised abelian surface for $d=2k+1, 2\leq k\leq 7$. Then there are exactly $d^2\cdot\binom{6}{k-1}$ hyperelliptic curves of geometric genus $k+3$ in the linear system $|L|$. If $k>7$ then there are no hyperelliptic curves of geometric genus $k+3$ on $(1,2k+1)$ polarised abelian surfaces.
\end{thm}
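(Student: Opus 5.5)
The plan is to set up a bijection between hyperelliptic curves of geometric genus $k+3$ in the algebraic class of $L$, taken up to translation and normalised as in Lemma~\ref{lemsymodd}, and pairs (symmetric odd divisor $D$ with $\cO(D)=L_0$, subset $Z_P\subset A_2^+(D)$ of size $k-1$). Translating by $A[2]$ then accounts for the other symmetric bundles. First I take a hyperelliptic $f\colon C\to A$ of geometric genus $k+3$ in the class of $L$. By the discussion around diagram \eqref{UniJac} and Lemma~\ref{lemsymodd}, I may translate so that $f(C)$ is symmetric and $f(w)\in A[2]$. Since $d$ is odd, a further translation by a $2$-torsion point makes $\cO(f(C))=L_0$. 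Lemma~\ref{lem:sing_count} gives
\[
2(k+3)+2\le \#A_2^-(D)+2(k-1).
\]
Hence $\#A_2^-(D)\ge 10$, so $\#A_2^-(D)=10$ by Proposition~\ref{prop:twotorsionofdivisors}, since $L_0$ is even. This forces $D$ to be odd and equality to hold. So $D$ has exactly $k-1$ nodes, all in $A_2^+(D)$. Thus $D=Z(\theta)$ with $\theta\in H^0_-(L_0)$, by Lemma~\ref{lem:parity}, and $\Psi(D)=P$ for a $(k-1)$-subset $P\subset\tilde S$. Conversely, Lemma~\ref{lem:odd_nodes} shows that any such divisor with $k-1$ points of $Z$ is a nodal hyperelliptic curve of genus $k+3$.

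Next I show that for each $P$ there is at most one such $D$. This is the intersection argument in the proof of Lemma~\ref{moduliopennes}: two such curves would meet in at least $10+4(k-1)>4k+2=L^2$ points. For existence on a general surface, Lemma~\ref{lem:onecurve} gives one triple $(A,H,P_0)\in U$. By Lemma~\ref{moduliopennes}, $U$ is open, and it is nonempty. Since $X$ is irreducible, $U$ is dense, so its complement lies in a proper closed subset. Its image in $\cA_2^{(1,d)}$ is therefore a proper closed subset, because $X\to\cA_2^{(1,d)}$ is finite. Hence on a general $(A,H)$ every $P$ is realised, and we get exactly $\binom{6}{k-1}$ symmetric odd curves in $|L_0|$ of the required type.

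For the count in $|L|$, each such curve has finitely many nodes, so it is invariant under no nonzero translation. Indeed, a translation would have to permute the $k-1$ nodes and the $10$ points of $A_2^-$; alternatively, the translation subgroup of $K(L)$ is trivial for very general $A$. Its translates lying in $|L|$ are indexed by $K(L)$, which has $d^2$ elements. Each hyperelliptic curve arises from exactly one normalised pair, because the normalisation (symmetric, characteristic $0$) is unique up to the choices already accounted for. This gives $d^2\binom{6}{k-1}$. For $k>7$, the inequality from Lemma~\ref{lem:sing_count} already requires $k-1\le \#A_2^+(D)$ with $\#A_2^+(D)=6$. So $k\le 7$, and there are no curves; the binomial coefficient also vanishes.

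The main obstacle is the uniqueness of the normalisation in the counting step. Different choices of Weierstrass point send $f(w)$ to different $2$-torsion points. This must not produce the same translation class twice, and it must not miss curves whose symmetric translate has characteristic $0$ only after a different $2$-torsion shift. I would handle this by noting that, since $d$ is odd, the $16$ translates by $A[2]$ of $L_0$ are distinct, so exactly one $2$-torsion shift gives characteristic $0$.
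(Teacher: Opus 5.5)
Your proposal is correct and follows essentially the same route as the paper. You normalise the curve to be symmetric with Weierstrass points in $A[2]$ and line bundle $L_0$; the $2$-torsion shift is unique since $K(L)\cap A[2]=0$. You then use Lemma~\ref{lem:sing_count} and Proposition~\ref{prop:twotorsionofdivisors} to force an odd section with $k-1$ nodes in the six points of $A_2^+(D)$, and separate curves with the same node set by $10+4(k-1)>4k+2$. Existence comes from $U$ being open and nonempty in the irreducible $X$ with finite forgetful map, and the count multiplies by the $d^2$ translates by $K(L)$.

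The paper spells out a few things you leave implicit. A curve supplied by $U$ may have characteristic $0$ only for another decomposition, and it is moved into $|L_0|$ by a $2$-torsion translation that preserves $\Psi$. Non-invariance under $K(L)$ holds because such a translation would permute the nodes in $A[2]$ and so lie in $K(L)\cap A[2]=0$; your aside about very general $A$ is not needed. Finally, Lemma~\ref{lem:odd_nodes} requires geometric genus $k+3$ as a hypothesis, which $U$ supplies.
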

\begin{proof}
    By a possible translation, we can assume that $L$ is symmetric of characteristic $0$. 
   We start by showing the upper bound.
   Since $d$ is odd and $K(L)\simeq\ZZ_d^2$ we see that only one of $d^2$ translations of a hyperelliptic curve maps Weierstrass points to $2$-torsion points of $A$. Denoting the curve by $D$, by Lemma \ref{lem:sing_count}, we get that $\#A_2^-(D)\geq 10$, so the curve has to be an odd section and $\#A_2^-(D)=10$ and we get an equality in Lemma \ref{lem:sing_count}. In particular, it has $k-1$ nodes in the remaining 6 points. This gives a contradiction for $k-1>6$, hence no such curves exist. Assume $k\leq 7$. If two curves $D,D'$ have nodes in the same points then $D\cdot D'\geq 10+4(k-1)=4k+6 > 4k+2=L\cdot L$, which is a contradiction. Therefore the number of hyperelliptic curves is at most  $d^2\cdot\binom{6}{k-1}$.

Now, let us construct these curves. Firstly, by Lemma \ref{lem:onecurve} the set $U$ from Lemma \ref{moduliopennes} is non-empty. Since the moduli space $X$ is irreducible and the complement of $U$ is an algebraic set, we get that $U$ is dense. The forgetful map is finite, so a standard argument shows that the set of abelian varieties for which the full fibre is contained in $U$ is open, hence dense. Now, choose such $A$ and consider the curves $D_P,\ P\subset \tilde{S},\ \#P=k-1$. By Lemma \ref{lem:odd_nodes}, the curves $D_P$ are hyperelliptic. 

Note that, according to the definition of $U$ it may happen that $D_P$ is a section of a line bundle of characteristic 0 for a different decomposition. Then, on the fixed decomposition, $D_P$ is a section of an $L$ of some characteristic $c$. Note that $L$ is even and $D_P$ is its odd section.
Then, $t_{-c}D_P$ is a section of $L_0$ (still odd because it passes through 10 points with odd multiplicity) and its set of nodes is $t_{-c}Z$. Now, by definition 
$$\Psi(t_{-c}D_P)=\{\cO(t_{-z}(t_{-c}D_P)), z\in t_{-c}Z\}=\{\cO(t_{-z}(D_P)), z\in Z\}=P$$   

This shows that for each set $P$ there exists a hyperelliptic curve $D$ being an odd section of $L_0$ with $\Psi(D)=P$, hence a general abelian variety contains precisely $d^2\cdot\binom{6}{k-1}$ hyperelliptic curves in the linear system of a line bundle of characteristic zero.
\end{proof}

\begin{rem}\label{BS}
    We discarded $k=1$, since then the geometric and arithmetic genera coincide, hence the curve is smooth. However, one can follow the arguments of this section to see it reproves the fact that there are 9 smooth hyperelliptic curves of genus 4 on a general $(1,3)$ polarised surface, see \cite{BorowkaSankaran2017hyperelliptic}. 
\end{rem}

\section{$(1,d)$ polarisation for an even $d$}\label{secteven}
In this section, we compute the number of hyperelliptic curves of geometric genus $k+3$ on a general $(1,2k)$ polarised abelian surface. Let us start with the following theorem.
\begin{thm}\label{nmbrdeven}
    Let $A$ be a very general $(1,d)$ polarised abelian surface with an even $d=2k\geq 4$. Then the number of hyperelliptic curves of geometric genus $k+3$ in the linear system of a line bundle $L$ equals $\frac{d^2}{4}\cdot\binom{4}{k-2}$ for $k\leq 6$ and there are no hyperelliptic curves of geometric genus $k+3$ for $k>6$. 
\end{thm}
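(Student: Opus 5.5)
We follow the odd case of Section \ref{SectOdd}, adapted to the fact that for $d=2k$ the group $K(L)[2]$ consists of $4$ points. The count splits into an upper bound from Lemma \ref{lem:sing_count} and Proposition \ref{prop:twotorsionofdivisors}, and an existence argument via theta functions on a finite cover of the moduli space.

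\emph{Upper bound.} Let $D\in|L|$ be a hyperelliptic curve of geometric genus $g=k+3$. Since $p_a(D)=2k+1$, we have $p_a(D)-g=k-2$. We translate $D$ so that a Weierstrass point maps to $0$; then $D$ is symmetric and odd, and $(-1)_A$ lifts to the hyperelliptic involution. Lemma \ref{lem:sing_count} gives
\[
2k+8\le \#A_2^-(D)+2(k-2),
\]
so $\#A_2^-(D)\ge 12$. By Proposition \ref{prop:twotorsionofdivisors} the only possibility for an odd divisor is $\#A_2^-(D)=12$, which occurs exactly when $\cO(D)=t^*_{\overline c}L_0$ with $c\in\{0,\tfrac12e_1,\tfrac12f_1\}$; then $A_2^+(D)$ has $4$ elements. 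Equality holds in Lemma \ref{lem:sing_count}, so $D$ has exactly $k-2$ ordinary nodes, all lying in the $4$ points of $A_2^+(D)$. This forces $k-2\le 4$, so no such curves exist for $k>6$. For fixed $\cO(D)$ and fixed node set $Z$, the curve is unique by the same intersection argument as before:
\[
D\cdot D'\ge 12+4(k-2)=4k+4>4k=L^2 .
\]

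\emph{Counting translates.} This is where care is needed. The translates of $D$ that are symmetric with a Weierstrass point at a $2$-torsion point are the $t_x D$ with $x\in A[2]$ and $t_xD\in|L|$. Their number is governed by $K(L)[2]$: translating by $x\in K(L)[2]$ keeps the line bundle, while the other $2$-torsion points move between the three characteristics $0,\tfrac12e_1,\tfrac12f_1$ together with the parity shifts. Concretely, we fix one symmetric $L$ with $\#A_2^-=12$ for odd sections; after translating we may take $c=\tfrac12e_1+\tfrac12f_1$, since odd sections of the bundles with $c\in\{0,\tfrac12e_1,\tfrac12f_1\}$ correspond to even sections with $12$ odd points. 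Then $A_2^+$ equals $K(L)[2]$ up to translation, and the node set $Z$ is a $(k-2)$-subset of these $4$ points. Each hyperelliptic curve in the algebraic class has $d^2$ translates in $|L|$. Among these, the $4$ translates by $K(L)[2]$ stay in $|L|$ and permute the configurations $Z$. This gives at most
\[
\binom{4}{k-2}\cdot\frac{d^2}{4}
\]
curves, provided that distinct subsets $Z$ give distinct orbits modulo $K(L)[2]$ after normalising so that the Weierstrass point goes to $0$. We will verify this bookkeeping explicitly, including the possibility that a curve is invariant under a translation in $K(L)[2]$, which is what happens for $d=8,12$. We expect this orbit/stabiliser count to be the main subtlety.

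\emph{Existence.} We mimic Lemma \ref{moduliopennes}. The sections in $H^0_+(t^*_{\frac12e_1+\frac12f_1}L_0)$ form a space of dimension $k-1$ for $k\ge4$, by Lemma \ref{lem:dimensionofh0}(6); for $k=2,3$ we use Lemma \ref{lem:dimensionofh0}(3) and~(5), and for $k=3$ the curves of Theorem \ref{thm:GenusgCurves}. Imposing vanishing at a $(k-2)$-subset $Z\subset K(L)[2]$ gives $k-2$ linear conditions and hence a unique curve $D_Z$. By the parity count, $D_Z$ is singular at each point of $Z$. The analogue of Lemma \ref{lem:odd_nodes} (with $12$ odd points and $k-2$ points in $A_2^+$) then shows that if these are $k-2$ distinct singular points, $D_Z$ has only nodes there and its normalisation is hyperelliptic of genus $k+3$. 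The required conditions are:
\begin{enumerate}
\item the linear system has maximal rank,
\item $D_Z$ is irreducible (this follows from (G1)),
\item the singularities at $Z$ are ordinary nodes.
\end{enumerate}
These are open conditions on the irreducible cover of moduli parametrising pairs $(A,Z)$. Since the symplectic group acts transitively on such subsets of $K(L)[2]$ fixing the bundle, this cover is irreducible. Nonemptiness comes from \cite{KLM}, exactly as in Lemma \ref{lem:onecurve}. Hence the locus where all fibres are good is dense, and for a very general $A$ we obtain all $\binom{4}{k-2}$ curves. Combined with the upper bound, this gives the stated number.
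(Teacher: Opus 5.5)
\textbf{Upper bound and counting.} Your upper bound matches the paper's: Lemma \ref{lem:sing_count}, Proposition \ref{prop:twotorsionofdivisors}, and $D\cdot D'\geq 12+4(k-2)>2d$. There is one slip. Moving a Weierstrass point to $0$ makes $D$ odd only if its image is a smooth point of $D$. The clean normalisation is the one you reach later: translate so that $D$ is an even section of the bundle of characteristic $\frac12e_1+\frac12f_1$, where $A_2^+=K(L)[2]$.

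The count itself is left unfinished, and the proviso you state is false. For $k=3$ and $k=5$ all four subsets $Z$ lie in a single $K(L)[2]$-orbit, yet the formula holds. The missing step is the stabiliser computation:
\begin{itemize}
\item If $t_yD_Z=D_Z$, then $y\in K(L)$, since the line bundle is preserved.
\item Also $y\in A[2]$, since the intrinsic set of images of Weierstrass points lies in $A[2]$ and is permuted by $t_y$.
\item Hence $y\in\mathrm{Stab}_{K(L)[2]}(Z)$. Conversely, every such $y$ fixes $D_Z$ by the intersection bound.
\end{itemize}
Two normalised translates of one curve differ by an element of $K(L)[2]$. So an orbit $O$ of subsets contributes $d^2/|\mathrm{Stab}|=\frac{d^2}{4}|O|$ curves to $|L|$. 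Summing gives $\frac{d^2}{4}\binom{4}{k-2}$ uniformly; the paper does this bookkeeping case by case.

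\textbf{Existence.} Your claim that the symplectic group acts transitively on the $(k-2)$-subsets of $K(L)[2]$ is false for $k=3,4,5$. The modular group acts on $K(L)[2]$ by group automorphisms, through $\mathrm{SL}_2(\bF_2)$. It therefore fixes $0$ and never moves a subset containing $0$ to one that does not. Your cover is reducible, and density follows only on the component containing the point coming from \cite{KLM}.

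The repair is to normalise $0\in Z$ by translating a node to $0$. For $k=4$ the cover becomes $\{(A,H,p):p\in K(L)[2]\setminus\{0\}\}$, which is irreducible because $T_2,J_2$ induce $S_3$ on the three non-zero points. Under (G1) you get maximal rank and irreducibility for free. The ordinary-node condition at $0$ and $p$ is open. Then one curve from \cite{KLM} on a general $(1,8)$ surface gives all three classes. For $k\neq 4$ no monodromy is needed, since all subsets lie in one translation orbit; this is how the paper argues.

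Lemma \ref{lem:dimensionofh0}(3) does not cover $c=\frac12e_1+\frac12f_1$. For $d=4$ one has $h^0_+=1$, and with $Z=\emptyset$ your Riemann--Hurwitz count alone, under (G1), forces the unique even section to be a smooth hyperelliptic genus $5$ curve. This can replace the appeal to \cite{BO20}.

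\textbf{Comparison for $k=4$.} Once repaired, your treatment of $k=4$ is a genuinely different route. The paper pulls back genus $4$ curves along the three quotients $A\to A/\langle p_i\rangle$ (Proposition \ref{lift14to18}, Theorem \ref{thm:1,8}). Your argument is uniform in $k$, but it needs a \cite{KLM} existence statement for $d=8$. The paper's construction reduces to the $(1,4)$ curves already built, and it also gives their description as \'etale double covers.
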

\begin{proof}
    For $k=2$, we get smooth genus 5 hyperelliptic curves on a $(1,4)$ polarised abelian surface which is the content of \cite{BO20}. The authors show that there is a unique curve up to translation that is invariant under the Klein group $K(L)[2]$. By translating by points in $K(L)\setminus A[2]$ we get precisely 4 hyperelliptic curves in the linear system $|L|$.

    To show the remaining cases, let us assume that $f:C\to A$ is a symmetric local embedding of a hyperelliptic curve of geometric genus $k+3$ to a $(1,2k)$ polarised abelian surface with Weierstrass points mapped to $2$ torsion points. By Lemma \ref{lem:sing_count} we get that $\#A_2^-(f(C))\geq 12$. By Proposition \ref{prop:twotorsionofdivisors} the only possible choice is the equality $\#A_2^-(f(C))=12$, hence $f(C)$ can have nodes only at the remaining $2$-torsion points. In particular, the number of Weierstrass points of $C$ is at most $12+2\cdot 4=20$, so $g=k+3\leq9$ and $d\leq 12$.

Now, assume $3\leq k\leq 6$. By comparing geometric and arithmetic genera, we get that $f(C)$ has $k-2$ nodes. We further translate the curve (by a $2$-torsion point) so that it has a node at $0$. Then, by definition, it is an even divisor, hence by Proposition \ref{prop:twotorsionofdivisors} the line bundle is of characteristic $c=\frac{e_1}2+\frac{f_1}2$. Moreover, the proof of the proposition shows that $A_2^+(f(C))=K(L)[2]$. By intersection theory, if two (even, symmetric) curves $D,D'$ have nodes in the same subset of points of $K(L)$ they coincide because
$$D.D'\geq12+4(k-2)=4k+4=2d+4>2d=L^2$$
Note that translations by elements of $K(L)[2]$ permute the points of $K(L)[2]$. For $k=3,5$ the action is faithful and transitive, hence there is precisely one curve up to translation and translations by $K(L)$ give all the copies. This means that the number of curves in the linear system equals $d^2$ that agrees with the formula because $\binom{4}{1}=\binom{4}{3}=4$.

If $k=6$ the set of nodes is precisely $K(L)[2]$ and so the curve is unique and invariant under translations of $K(L)[2]$. Its translations by $K(L)\setminus K(L)[2]$ are also $K(L)[2]$ invariant, hence the number of curves in the linear system equals $\frac{d^2}4$.

For $k=4$ we can choose any $2$ points out of $4$ and translations by elements of $K(L)[2]$ either leave a curve invariant or swap the two points with their complement. In particular, the upper bound is again given by the formula $\frac{d^2}{4}\cdot\binom{4}{2}$.

The lower bound in cases $k\neq 4$ follows from the existence of a hyperelliptic curve given in \cite{KLM} because all hyperelliptic curves in the linear system are translations of this curve.
For $k=4$ we construct the curves in Theorem \ref{thm:1,8}. 
\end{proof}
The proof of Theorem \ref{nmbrdeven} for $k=6$ gives an immediate corollary.
\begin{cor}\label{corKlein}
    On a very general $(1,12)$ polarised abelian surface $(A,L)$ there is a unique (up to translation) hyperelliptic curve of geometric genus 9. The curve is an \'etale Klein covering of a genus 3 curve and the covering can be constructed as a restriction of the quotient isogeny $A\to A/(K(L)[2])$. In particular, the genus 3 curve is one of the curves in a linear system of a $(1,3)$ polarised surface, see Theorem \ref{thm:GenusgCurves}. 
\end{cor}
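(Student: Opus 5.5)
Take the curve $D=f(C)$ constructed in the proof of Theorem \ref{nmbrdeven} for $k=6$. It is normalised so that $\cO(D)=t^*_{\overline c}L_0$ with $c=\frac12e_1+\frac12f_1$, and its four nodes are exactly the points of $K(L)[2]$.

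\emph{Uniqueness.} This is already contained in that proof. Every hyperelliptic curve of geometric genus $9$ in the algebraic class of $L$ can be translated to such a $D$. The intersection argument $D\cdot D'\geq 12+4\cdot4>24$ shows that the normalised curve is unique. Hence all such curves are translates of $D$ by elements of $A$, and within $|L|$ by elements of $K(L)$.

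\emph{Invariance under $K(L)[2]$.} Take $x\in K(L)[2]$. Then $t_x^*\cO(D)\cong\cO(D)$, since $x\in K(L)$. The divisor $t_x(D)$ is still symmetric, because $x$ is $2$-torsion. It is even, because it has a node at $t_x(0)=x\in A[2]$ and $0=x+x$ is also a node. Its nodes lie at $x+K(L)[2]=K(L)[2]$. By the uniqueness of the section with nodes at the prescribed four points of $K(L)[2]$, we get $t_x(D)=D$. So the group $G=K(L)[2]\simeq(\ZZ/2)^2$ acts on $D$. The action lifts to the normalisation $C$ by the universal property of normalisation, and it is free on $C$ because translations by nonzero points have no fixed points on $A$.

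\emph{The quotient.} Set $B=A/G$ and let $\pi\colon A\to B$ be the isogeny. Since $G\subset K(L)$ is isotropic for the commutator pairing (it is of $2$-torsion inside $\ZZ_{12}^2$, on which the Weil pairing takes values that are $\pm1$; I need to check it is trivial, using that $K(L)[2]$ is generated by $6e_2/12$ and $6f_2/12$ with pairing $\exp(2\pi i\cdot 36/12)=1$), $L$ descends to a line bundle $M$ on $B$ with $\pi^*M=L$. Its type is $(1,3)$, since $M^2=L^2/4=6$ and $M$ is primitive for very general $A$. The image $\pi(D)$ lies in $|M|$ and is birational to $C/G$. The map $C\to C/G$ is étale of degree $4$, so Riemann–Hurwitz gives $2\cdot9-2=4(2g'-2)$, hence $g'=3$. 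Moreover, $\pi(D)$ has arithmetic genus $4$, and the four nodes of $D$ form a single $G$-orbit, so they map to one node of $\pi(D)$.

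\emph{Identifying the genus $3$ curve.} The curve $C/G$ is hyperelliptic, because the hyperelliptic involution of $C$ (the lift of $(-1)_A$) commutes with $G$ and descends to an involution with rational quotient. Hence $\pi(D)$ is a hyperelliptic genus $3$ curve with one node on the $(1,3)$ surface $B$. Since $B$ is very general whenever $A$ is, Theorem \ref{thm:GenusgCurves} applies and $\pi(D)$ is one of the $90$ curves in the linear system of a $(1,3)$ polarisation.

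The main obstacle is the descent step. One has to verify carefully that $K(L)[2]$ is isotropic, so that $L$ descends, and that $A\mapsto A/K(L)[2]$ sends very general surfaces to very general $(1,3)$ surfaces. The latter holds because the map is a finite correspondence between moduli spaces.
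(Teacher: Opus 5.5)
Your proof is correct and follows the paper's route. The paper gives no separate argument: it notes that in the $k=6$ case of Theorem~\ref{nmbrdeven} the nodes are exactly $K(L)[2]$, so the unique curve is invariant under $K(L)[2]$ and descends along $A\to A/K(L)[2]$, and you have supplied the details it leaves implicit (freeness on the normalisation, Riemann--Hurwitz, hyperellipticity of the quotient, very generality of $B$). Your isotropy check is correct but not needed: $G$-invariance of $D$ already gives $D=\pi^{-1}(\pi(D))$, hence $\cO(D)=\pi^*\cO_B(\pi(D))$.
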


\begin{rem}
    The numbers presented in Theorem \ref{nmbrdeven} confirm the prediction of \cite[Table 1]{BryanOberdieckPandharipandeYin2015Curve}. By uniqueness, one can show that the curves are solutions to the following linear system of equations. If $\theta_1,\ldots,\theta_{k-1}$ is a basis for \(H^0_{+}(t^*_{\frac{1}{2}e_1 + \frac{1}{2}f_1}L_0)\), and $q_1,\ldots, q_{k-2}\in K(L)[2]$ is a set of points that are nodes then there exists a unique (up to a non-zero scalar) $k-1$ tuple \(\alpha_1,\ldots,\alpha_{k-1} \in \bC\) such that
		\begin{equation*}
			\alpha_1 \theta_1(q_j) + \alpha_2 \theta_2(q_j) +\ldots+ \alpha_{k-1} \theta_{k-1}(q_j) = 0\ \ \text{for all $j=1,\ldots, k-2$}
		\end{equation*}
        and $\theta=\alpha_1 \theta_1 + \alpha_2 \theta_2 +\ldots+ \alpha_{k-1} \theta_{k-1}$ is the theta function that gives the curve.  
\end{rem}

\subsection{The case $d=8$} In this subsection, we construct \(96\) hyperelliptic curves of geometric genus \(7\) in the linear system of the polarisation of a general \((1,8)\) polarised abelian surface. 
We show that these curves are double coverings of genus \(4\) curves constructed in \autoref{thm:GenusgCurves}.

The next proposition gives a crucial connection between hyperelliptic curves on $(1,4)$ and $(1,8)$ polarised surfaces. 
\begin{prop}
\label{lift14to18}
    Let $(B,L_B)$ be a very general $(1,4)$ polarised abelian surface with $B = V/\Lambda$ and $C$ be a symmetric hyperelliptic curve in $|L_B|$ of geometric genus 4 with a node at $0$. Then there exists a unique polarised isogeny $\pi: (A, L_A) \to (B, L_B)$ from a $(1,8)$ polarised abelian surface $(A,L_A)$ such that $\pi^{-1}(C)$ is a geometric genus 7 hyperelliptic curve with two nodes. More precisely, if $C = Z(s)$ for $s \in H^0_+(t_{\overline{c}}^*L_{B})$ for a characteristic $c$ with respect to some decomposition $\Lambda = \langle e_1, e_2\rangle \oplus \langle f_1, f_2 \rangle$ for $L_B$ then $\pi$ is given by $c+\frac12 e_1+\frac12 f_1 \in \frac{1}{2}\Lambda(L_B)$.

    The map $\pi^{-1}(C) \to C$ is an \'etale double cover defined by the line bundle $\mathcal{O}_{C}(w_1 - w_2)$, where $\{w_1, w_2\} = C \cap K(L_B)[2] \setminus \{0\}$. 
\end{prop}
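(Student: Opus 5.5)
The plan is to reduce everything to $2$-torsion bookkeeping on $B$, and then to recover hyperelliptic-ness from a Riemann--Hurwitz count exactly as in Step~2 of Lemma~\ref{lem:existence}.

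\textbf{Step 1: the $2$-torsion data on $B$.} Let $C=Z(s)$ with $s\in H^0_+(t^*_{\overline c}L_B)$ and $c\in\cC_4$, as in Lemma~\ref{lem:existence}. Then $C$ is even with a node at $0$, and $A_2^-(C)$ has $8$ points. By Proposition~\ref{prop:twotorsionofdivisors}, exactly two of them, say $w_1,w_2$, lie in $K(L_B)[2]$. The third non-zero point $x_0\in K(L_B)[2]$ is not on $C$: its multiplicity is even, and the only singular point of $C$ is $0$. We identify $w_1,w_2$ with the corresponding Weierstrass points of the normalisation.

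\textbf{Step 2: the count on the cover.} A degree $2$ isogeny $\pi\colon A\to B$ is determined by an index $2$ subgroup $\pi(A)$-compatible datum; concretely, $\pi(A[2])$ is an index-$2$ subgroup $H\subset B[2]$, namely the kernel of the Weil pairing with the $2$-torsion point $\eta\in\widehat B[2]$ defining $\pi$. The preimage of a point $y\in B[2]$ consists of $2$-torsion points of $A$ if $y\in H$, and of $4$-torsion points otherwise. The arithmetic genus of $\tilde C:=\pi^{-1}(C)$ is $p_a=\tfrac12\pi^*L_B^2+1=9$. It has exactly two singular points, the two nodes over $0$, both lying in $\ker\pi\subset A[2]$. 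So if $\tilde C$ is irreducible, its geometric genus is $7$.

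A hyperelliptic genus $7$ curve has $16$ Weierstrass points. Four of them lie over the two nodes, so by Lemma~\ref{lem:sing_count} we need exactly $12$ points of $A_2^-(\tilde C)$. These must be the preimages of $6$ of the $8$ points of $A_2^-(C)$. Hence the condition becomes:
\[
H\supset A_2^-(C)\setminus\{w_1,w_2\}\quad\text{and}\quad w_1,w_2\notin H.
\]

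\textbf{Step 3: existence and uniqueness of $H$.} Using the explicit quadratic form $\bar q_c$ of Lemma~\ref{lem:qformula}, I will check that this forces a unique hyperplane $H\subset\bF_2^4$, and that $H$ also contains $0$ and $x_0$. In coordinates I expect $H$ to be the kernel of a linear form determined by the linear part of $\bar q_c$ shifted by $u_1v_1$, which is where the characteristic $c+\tfrac12e_1+\tfrac12f_1$ enters. I then need two further checks. First, the corresponding $\eta$ gives an isogeny with $\pi^*L_B$ of type $(1,8)$ rather than $(2,4)$; this amounts to checking that $\eta$ pairs appropriately with $K(L_B)[2]$. Second, the $12$-point count matches the case $c'=\tfrac12e_1+\tfrac12f_1$ of Proposition~\ref{prop:twotorsionofdivisors} for $\pi^*$ of the bundle on $A$. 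Uniqueness of $\pi$ follows because any other admissible $\pi$ yields a different $H$, and that $H$ fails the count.

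\textbf{Step 4: hyperelliptic-ness and the \'etale cover.} Irreducibility of $\tilde C$ follows because otherwise $\tilde C$ would split into two curves, each mapping isomorphically to $C$; for very general $B$ this forces $\eta|_C$ to be trivial, which contradicts the Weierstrass count. The involution $(-1)_A$ lifts to the normalisation of $\tilde C$ with at least $16$ fixed points, so Riemann--Hurwitz, as in Lemma~\ref{lem:existence}, gives a rational quotient and genus $7$. The restriction $\tilde C\to C$ is \'etale of degree $2$ and defined by $\eta|_C\in JC[2]$. By Mumford's criterion, an \'etale double cover of a hyperelliptic curve is hyperelliptic exactly when the defining $2$-torsion point has the form $w_i-w_j$. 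The branch points of the pulled-back involution are precisely the Weierstrass points of $C$ whose preimages in $\tilde C$ are not $2$-torsion, namely $w_1,w_2$. Hence $\eta|_C=\cO_C(w_1-w_2)$.

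\textbf{Main obstacle.} The main obstacle is Step~3. There I must verify, from the explicit form $\bar q_c$, that $A_2^-(C)\setminus\{w_1,w_2\}$ spans exactly a hyperplane avoiding $w_1,w_2$. I must also translate $H$ into the stated characteristic $c+\tfrac12e_1+\tfrac12f_1$ and confirm the resulting polarisation type is $(1,8)$. I would do this first for one $c\in\cC_4$ and then transport the result to all $c\in\cC_4$ via Lemma~\ref{lem:orbits}.
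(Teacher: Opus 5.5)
Your proposal is correct. It has the same skeleton as the paper's proof: force twelve odd-multiplicity $2$-torsion points on $\pi^{-1}(C)$ to pin down $\pi$, prove irreducibility, apply Riemann--Hurwitz to the lift of $(-1)_A$, and identify the cover from the Weierstrass points $w_1,w_2$ of $C$ whose preimages are not Weierstrass points. Where you differ is in how you locate the isogeny.

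\textbf{How the two routes locate $\pi$.} The paper works on $A$. It asserts $\pi^{-1}(C)\in|t^*_{\overline{c-c'}}L_A|$ and applies Proposition~\ref{prop:twotorsionofdivisors} with $d=8$, so twelve odd points force $c-c'\equiv\frac12e_1+\frac12f_1$. You stay on $B$, using $\pi(A[2])=\eta^\perp=:H$ and the condition $\#(H\cap A_2^-(C))=6$.

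Your route avoids tracking characteristics under $\pi^*$. It also shows directly why the omitted points are exactly $w_1,w_2$, which is what yields $\cO_C(w_1-w_2)$. And it shows why the type is $(1,8)$: $H\not\supset K(L_B)[2]$. The paper's route is shorter because it reuses the classification on $A$.

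\textbf{Closing your Step~3.} Take $\ell=\alpha u_1+\beta v_1+\gamma u_2+\delta v_2$. A character sum gives $\#\{\bar q_c=1,\ \ell=1\}\in\{2,4,6\}$. The value $2$ is attained only for $\ell=(1+a_1)u_1+(1+b_1)v_1+a_2u_2+b_2v_2$, uniformly in $c\in\cC_4$, so no transport via Lemma~\ref{lem:orbits} is needed. This $\ell$ is the linear part of $\bar q_c$ plus $u_1+v_1$, not a shift by $u_1v_1$. Equivalently it is the linear part of $\bar q_{c+\frac12e_1+\frac12f_1}$, which is exactly the paper's $c'$ read through $x\mapsto e_2(x,\varphi_{L_B}(c'))$. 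This single computation gives both existence and uniqueness. It also justifies the ``hence'' in your Step~2, where a priori $H$ could omit any two of the eight points.

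\textbf{Irreducibility.} The paper's argument is simpler than yours: a split preimage would have class $2[C_1]$, contradicting primitivity of a $(1,8)$ class. Your $\eta|_C$ route only reaches a contradiction after counting the fixed points of $(-1)_A$ on the two components. Very generality plays no role in that step.
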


\begin{proof}
    Fix a decomposition $\Lambda = \langle e_1, e_2\rangle \oplus \langle f_1, f_2 \rangle$ for $L_B$ with $\im H(e_1, f_1) = 1, \im H(e_2, f_2) = 4$. It follows from the properties of $C$ that there exists a characteristic $c \in \frac12\Lambda(L_B) \setminus \{0, \frac12 e_1, \frac12 f_1, \frac12e_1+\frac12f_1\}$ such that $C = Z(s)$ for some $s \in H^0_+(t_{\overline{c}}^*L_{B})$. Let $\pi_{c'}:(A, L_A) \to (B, L_B)$ be an isogeny of degree 2 defined by $c' \in \widehat{B}[2] \setminus K(L_{\widehat{B}})[2]$ (note that we must take a $2$ torsion point outside of $K(L_{\widehat{B}})[2]$ in order for $L_A$ to be of type (1,8)). Recall that the set $\widehat{B}[2] \setminus K(L_{\widehat{B}})[2]$ is in bijection with $\frac12\Lambda(L_B) \setminus \{0, \frac12 e_1, \frac12 f_1, \frac12e_1+\frac12f_1\}$ via the polarisation morphism $\varphi_{L_B}$, hence we regard $c'$ as an element of $\frac12\Lambda(L_B) \setminus \{0, \frac12 e_1, \frac12 f_1, \frac12e_1+\frac12f_1\}$. Then $\pi_{c'}^{-1}(C) = Z(s')$ for some $s' \in H^0(t^*_{\overline{c}}L_A) = H^0(t^*_{\overline{c - c'}}L_A)$. Note that any hyperelliptic curve on $A$ of geometric genus 7 with two nodes must pass through twelve $2$ torsion points of $A$ with multiplicity 1, hence in order for $\pi_{c'}^{-1}(C)$ to be as above we must have $c' = \frac{1}{2}e_1 + \frac{1}{2}f_1 + c$ (see Proposition \ref{prop:twotorsionofdivisors}). 
    
    It remains to prove that for this $c'$ the lift is indeed a hyperelliptic curve of geometric genus 7 with two nodes. Since $\pi_{c'}$ is \'etale we only need to show that $\pi_{c'}^{-1}(C)$ is hyperelliptic. It is irreducible because its class is a primitive polarisation. Recall that the curve $C$ has multiplicity 2 at 0, multiplicity 1 at 8 points of $B[2]$ and multiplicity 0 at all other points of $B[2]$, and it follows from Proposition \ref{prop:twotorsionofdivisors} that $C \cap K(L_B)[2] = \{w_1, w_2, 0\}$. Therefore, the intersection $C \cap (B[2]\setminus K(L_B)[2])$ consists of $6$ points, hence $\pi_{c'}^{-1}(C)$ has multiplicity 1 at all 12 points of $\pi_{c'}^{-1}(C \cap (B[2]\setminus K(L_B)[2])) \subset A[2]$ and nodes at $\{0, \kappa_1\} = \ker \pi_{c'}$. It follows from Riemann-Hurwitz formula applied to (the normalisation of) $\pi_{c'}^{-1}(C) \to \pi_{c'}^{-1}(C) / (-1)$ that $\pi_{c'}^{-1}(C) / (-1) \simeq \mathbb{P}^1$ and thus $\pi_{c'}^{-1}(C)$ is hyperelliptic. Moreover, we get that $w_1, w_2$ are the only Weierstrass points on $C$ whose preimages under $\pi_{c'}$ are not Weierstrass points of $\pi_{c'}^{-1}(C)$ so $\mathcal{O}_C(w_1 - w_2)$ is the defining line bundle of the cover $\pi_{c'}$, see \cite[Prop 4.2 and Figure 1]{BorowkaOrtega2019hyperelliptic}. 
\end{proof}

Now, we are able to compute the number of hyperelliptic genus 7 curves in the following theorem.
\begin{thm}
\label{thm:1,8}
	Let \((A,L)\) be a very general \((1,8)\)-polarised abelian surface. Then there are \(96\) geometric genus \(7\) hyperelliptic curves in \(|L|\) that are double covers of geometric genus 4 curves locally embedded into $(1,4)$ polarised surfaces.
\end{thm}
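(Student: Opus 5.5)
The upper bound of $\frac{d^2}{4}\binom{4}{2}=16\cdot 6=96$ curves is already in the proof of Theorem~\ref{nmbrdeven} for $k=4$. So it suffices to construct, on a very general $(A,L)$ of type $(1,8)$, one symmetric geometric genus 7 hyperelliptic curve with nodes at each of the $\binom42=6$ two-element subsets of $K(L)[2]$. The $K(L)$-translates of these then give all $96$ curves, and each will be a double cover of a $(1,4)$ curve.

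The construction dualises Proposition~\ref{lift14to18}. Fix a very general $(1,8)$ surface $(A,L)$ and a subgroup $\{0,\kappa\}\subset K(L)[2]$ of order $2$ with $\kappa\neq0$. The quotient $B=A/\langle\kappa\rangle$ carries a polarisation $L_B$ with $\pi^*L_B=L$. It has type $(1,4)$ rather than $(2,2)$, because $\kappa$ is an element of order $2$ in the cyclic factor $\ZZ_8$ of $K(L)$, and the order-2 subgroups of $K(L)[2]\cong(\ZZ_2)^2$ are exactly the subgroups of the three cyclic subgroups of order $8$.

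Next I check that $(B,L_B)$ is again very general. The assignment $(A,\kappa)\mapsto(B,L_B)$ is a finite correspondence between the moduli spaces of $(1,8)$ and $(1,4)$ surfaces, so the countable union of bad loci pulls back to a countable union of proper closed subsets. Hence (G1) and (G2) hold for $B$ when $A$ is very general.

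On $B$, Theorem~\ref{thm:GenusgCurves} and Lemma~\ref{lem:existence} give, for every $c\in\cC_4$, a symmetric genus 4 hyperelliptic curve $C_c$ with a node at $0$. Proposition~\ref{lift14to18} says that exactly one isogeny of degree 2 onto $B$, namely the one determined by $c+\frac12e_1+\frac12f_1$, lifts $C_c$ to a hyperelliptic genus 7 curve with nodes at the two points of its kernel. I therefore need to show that the isogeny $\pi\colon A\to B$ we started with arises as this lift for some $c\in\cC_4$. As $c$ runs over the twelve elements of $\cC_4$, the element $c+\frac12e_1+\frac12f_1$ runs over all of $\frac12\Lambda(L_B)\setminus\{0,\frac12e_1,\frac12f_1,\frac12e_1+\frac12f_1\}$. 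This set corresponds to all twelve $(1,8)$ double covers of $B$, so $\pi$ is among them. Since the node set $\{0,\kappa\}=\ker\pi$, this produces a symmetric curve $\pi^{-1}(C_c)$ in $A$ with nodes exactly at $\{0,\kappa\}$.

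This settles the node pairs containing $0$. A general pair $\{\kappa_1,\kappa_2\}$ is reduced to this case by translating by $\kappa_1\in K(L)[2]$, which permutes $K(L)[2]$ and preserves the class. Since $(\ZZ_2)^2$ acts on its own pairs, each of the six pairs is reached. Translating further by $K(L)$ gives $16$ curves per pair, and the upper bound then shows there are exactly $96$. The double-cover statement is the last paragraph of Proposition~\ref{lift14to18}.

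The main obstacle is the identification step: verifying that $B=A/\langle\kappa\rangle$ has type $(1,4)$, and that the characteristic bookkeeping in Proposition~\ref{lift14to18} makes the given $\pi$ coincide with $\pi_{c'}$ for some $c$. A secondary issue is that the lifted curve has its class in $|L|$ only up to translation, so the line bundle must be matched using the translations above.
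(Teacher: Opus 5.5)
Your proposal is correct and follows essentially the paper's proof. You take the upper bound $96$ from Theorem~\ref{nmbrdeven}, pass to the three very general $(1,4)$ quotients $A\to A/\langle\kappa\rangle$ for $0\neq\kappa\in K(L)[2]$, lift via Proposition~\ref{lift14to18} (your remark that $c\mapsto c+\frac12e_1+\frac12f_1$ permutes $\cC_4$ just makes explicit why each such quotient arises as some $\pi_{c'}$), and then count $K(L)$-translates, where your $6\cdot16$ is the paper's $3\cdot32$. One minor slip: $(\ZZ_8)^2$ has twelve, not three, cyclic subgroups of order $8$, but what you actually use is true, namely that each such $\kappa$ equals $4v$ with $v$ of order $8$, so $\kappa^\perp/\langle\kappa\rangle\cong(\ZZ_4)^2$.
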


\begin{proof}
    Note that there are precisely three isogenies $(A,L) \to (B,L_B)$ such that $L_B$ is of type $(1,4)$, namely the quotients $\pi_i: A \to A/\left<p_i\right>$ for $i=1,2,3$, where $\{p_1, p_2, p_3, 0\} = K(L)[2]$. Moreover, by the assumptions on $A$, the surface $A/\left<p_i\right>$ is very general. It follows from the proof of Proposition \ref{lift14to18} that for $i=1,2,3$ the map $\pi_i$ gives a unique geometric genus 7 hyperelliptic curve $C_i$ with nodes at $0$ and $p_i$ in the linear system of $L$. The curves are invariant under translation by $p_i\in K(L)$ (and not by any other translation due to having two nodes at $0,p_i$), hence the total number of hyperelliptic curves equals $32\cdot 3=96$. Since, by Theorem \ref{nmbrdeven}, $96$ is also the upper bound, we get the result.
\end{proof}

\begin{rem}
Theorem \ref{nmbrdeven} and Theorem \ref{thm:1,8} show that all 96 hyperelliptic curves of geometric genus 7 in the linear system of a very general $(1,8)$ polarised surface are double covers of genus 4 curves that are locally embedded into $(1,4)$ polarised surfaces and the covering can be constructed as a restriction of quotient isogenies.
\end{rem}

We would like to finish the paper with the following problem that is a generalisation of the results of this section. Fix $k\in \ZZ$. Can one construct a hyperelliptic genus $g\geq k$ curve $C$ with $\mathbb{Z}/2\mathbb{Z} \times \mathbb{Z}/2\mathbb{Z} \subseteq \operatorname{Aut}(C)/\langle \iota \rangle$, where $\iota$ denotes the hyperelliptic involution on $C$, that can be locally embedded into an abelian surface? In other words, do there exist hyperelliptic Klein coverings of arbitrarily high genus locally embedded into abelian surfaces?

\section{Statements}
 The authors state that there is no conflict of interest.
 The manuscript has no associated data.

\subsection*{Statement on AI use}
All mathematical results, arguments, and proofs in this article are the work of the authors. The authors acknowledge the use of LLMs to check the final version of the manuscript for possible mathematical errors and typographical mistakes. Following LLM review, the exposition of the article has been improved.

\bibliographystyle{myalphaspecial} 
\bibliography{references} 

\end{document}